\numberwithin{equation}{section}
\newtheorem{theorem}{Theorem}[section]
\newtheorem{prop}[theorem]{Proposition}
\newtheorem{lemma}[theorem]{Lemma}
\newtheorem{corollary}[theorem]{Corollary}
\newcommand\Item[1][]{%
  \ifx\relax#1\relax  \item \else \item[#1] \fi
  \abovedisplayskip=0pt\abovedisplayshortskip=0pt~\vspace*{-\baselineskip}}
\theoremstyle{definition}
\theoremstyle{definition}
\DeclareMathOperator{\Prob}{\mathbf{P}}
\DeclareMathOperator{\E}{\mathbf{E}}
\title[]{Elephant random walks with multiple extractions and general reinforcement functions}
\date{}
\author{Moumanti Podder and Archi Roy}
\address{Moumanti Podder, Indian Institute of Science Education and Research (IISER) Pune, Dr.\ Homi Bhabha Road, Pashan, Pune 411008, Maharashtra, India.}
\address{Archi Roy, Indian Institute of Management, Kozhikode 673570, Kerala, India}
\email{moumanti@iiserpune.ac.in}
\email{archiroy@iimk.ac.in}
\begin{document}
\bibliographystyle{plainnat}

\begin{abstract}
We consider a generalized model of elephant random walks wherein the walker, during the $(n+1)$-st time-stamp, draws from the past (i.e.\ the set $\{1,2,\ldots,n\}$) a sample of $k$ time-stamps, either with replacement or without, where $k$ may either remain fixed as $n$ grows, or $k=k(n)$ may grow with $n$. Letting $U_{n,1}, U_{n,2}, \ldots, U_{n,k}$ denote the time-stamps sampled, the step taken by the walker during the $(n+1)$-st time-stamp, denoted $X_{n+1}$, is a $\pm 1$-valued random variable whose distribution depends on the proportion of $(+1)$-valued steps out of $X_{U_{n,1}},X_{U_{n,2}},\ldots,X_{U_{n,k}}$ via a reinforcement function $f$. In this paper, we investigate the asymptotic behaviour -- i.e.\ strong and weak convergence -- of this random walk model under suitable assumptions made on the function $f$ (as well as on the sequence $\{k(n)\}$ when the sample size varies with $n$).

\end{abstract}

\subjclass[2020]{60F05, 60G50, 60F15}

\keywords{elephant random walks; reinforced random walks; random walks with memory; sampling with and without replacement; general reinforcement functions; strong and weak convergence}

\maketitle

\tableofcontents
\section{Introduction}\label{sec:intro}
Motivated by the theory of increasing returns in economics (see, for instance, \citep{arthur1994increasing}), we consider a situation where two companies release similar products into the market around the same time, and we propose and analyze a model that tracks the sale, over time, of one of these products relative to the other. Our model involves a setting in which consumers possess imperfect information (for instance, see \citep{arthur1yu}) about the underlying quality of the competing products, and therefore, rely on informal signals such as peer choices, reviews, or aggregated opinions, to guide their decisions. These feedback mechanisms introduce path dependence into the market dynamics, often ultimately leading to the dominance of one product.

We begin with a brief and informal overview of this model, which is formally introduced in \S\ref{sec:model}. In this work, time is considered discrete, and hence indexed by the set $\mathbb{N}_{0}$ of non-negative integers. Each of the two competing products is assumed to have been released into the market at time $t=0$, and $t=n$ indicates the moment at which the $n$-th sale happens (i.e.\ combining the sales of both types of products, a total of $n$ items have been sold in the time-interval $[0,n]$). A simplifying assumption we make here is that each customer purchases precisely one product at a time. The $(n+1)$-st customer samples $k$ customers, with or without replacement, from the past (i.e.\ from among the customers who made their purchases at time-stamps $t=1$, $t=2$, $\ldots$, $t=n$), and collects their responses (regarding the choices these customers made). The decision made by the $(n+1)$-st customer is a random variable whose distribution is \emph{some} function of the responses obtained from the sample drawn. The sample size, $k$, can either remain constant as $n$ varies, or be a function of $n$ (in which case we write $k=k(n)$) that grows with $n$. Following the terminology of \cite{franchini2025elephant}, we refer to this framework as an elephant random walk model with \emph{multiple extractions}.

Although our work is largely motivated by the market share example, the underlying structure has broad applicability across a range of fields \citep{khanin2001probabilistic,oliveira2008balls}. Here, we highlight that a similar model has been considered in \cite{franchini2023large} and \cite{franchini2025elephant}, but with the sample size being an odd positive integer that remains fixed as $n$ varies, and the reinforcement function $f$ (see \S\ref{sec:model}, and in particular, \eqref{X_{n+1}_distribution}, for understanding the role of the reinforcement function in our model) being the majority function (in other words, $f(x)=1$ for $x>1/2$, and $f(x)=0$ for $x<1/2$). In this context, we also mention \cite{franchini2017large} that considers a two-colour generalized urn model wherein the urn configuration is updated at each time-stamp via a stochastic update rule that depends on the current configuration of the urn via a continuous reinforcement function. In \cite{franchini2023large} and \cite{franchini2017large}, the primary focus is on strong convergence and large deviation principles. 

Denoting the product manufactured by the first company by $P_{1}$, and the second by $P_{2}$, we maintain a chart that tracks the sale of each of $P_{1}$ and $P_{2}$ over time. Every time a unit of $P_{1}$ is purchased, we mark this sale by adding a $(+1)$ to our chart, while every time a unit of $P_{2}$ is purchased, we mark it using a $(-1)$, so that the sum of all the markings up to and including time-stamp $n$ equals the \emph{relative performance} of $P_{1}$ with respect to $P_{2}$ up to and including time-stamp $n$ (i.e.\ to what extent the sale of $P_{1}$ exceeds, or trails behind, that of $P_{2}$, provided a total of $n$ units, of $P_{1}$ and $P_{2}$ combined, have been sold so far). Such a chart is evidently reminiscent of a random walk in which the position of the walker at time $n$ is given by the relative performance of $P_{1}$ with respect to $P_{2}$ up to and including time-stamp $n$. This observation, in turn, opens up another perspective from which our model may be studied and may find applicability: that of the relatively novel notion of \emph{elephant random walks}.

A third perspective from which our model can be viewed is that of a \emph{generalized urn model} comprising balls of two colours, namely, $(+1)$ and $(-1)$. At time $t=0$, we begin with an empty urn, and at time $t=1$, we add to it a ball which is of colour $(+1)$ with a certain prespecified probability $q$, and of colour $(-1)$ with the remaining probability $(1-q)$. At time-stamp $t=(n+1)$, $k$ balls are chosen, uniformly at random, either with replacement or without, from the existing balls in the urn, and their colours noted. The sampled balls are then returned to the urn, along with a new ball whose colour is a $\pm 1$-valued random variable with probability distribution depending on the colours of the sampled balls.    

The primary objective of this work is to understand the limiting behaviour of the stochastic process outlined above. In particular, when the model is studied as a version of the celebrated elephant random walk, with the position of the walker (or elephant) at time $n$ denoted by $S_{n}$, we analyze the almost sure convergence of the sequence $\{S_{n}/n\}$, as well as the distributional convergence of a sequence obtained via a suitable scaling of $S_{n}$, as $n$ approaches $\infty$. Such results on strong and weak convergence are achieved via the representation of our stochastic process as a suitable \emph{stochastic approximation process}. In fact, results from the existing literature on stochastic approximation are instrumental in proving the results in this paper.

\subsection{Organization of the paper} The rest of the paper has been laid out as follows. The model has been formally introduced in \S\ref{sec:model}: the model obtained when the sampling scheme involved is \emph{with replacement} has been outlined in \S\ref{subsec:with}, and the model obtained when the sample drawn is \emph{without replacement} has been outlined in \S\ref{subsec:without}. A detailed discussion of the existing literature, that is pertinent to the work done in this paper, has been provided in \S\ref{sec:lit_review}. The main results of the paper have been stated in \S\ref{sec:main_results}, with \S\ref{subsec:results_with} addressing the model described in \S\ref{subsec:with}, and \S\ref{subsec:without_results} addressing the model described in \S\ref{subsec:without}. All of the results in this paper have been proved, in detail, in \S\ref{sec:all_proofs} -- in particular, the proofs of the results from \S\ref{subsec:results_with} have been included in \S\ref{subsec:proofs_with}, whereas the proofs of the results from \S\ref{subsec:without_results} have been included in \S\ref{subsec:proofs_without}.

\section{Formal description of the model}\label{sec:model}
\subsection{Model involving sampling with replacement}\label{subsec:with} We begin by describing the model in which the $(n+1)$-st customer (equivalently, as the elephant is about to decide the step it would take at the $(n+1)$-st time-stamp, in our version of the elephant random walk) samples, \emph{with} replacement, $k$ customers from the past (equivalently, the elephant samples, \emph{with} replacement, $k$ of the steps that it has taken so far). At the very outset, we introduce the notation $[n]$ to indicate the set $\{1,2,\ldots,n\}$ of the first $n$ positive integers, for each $n \in \mathbb{N}$. We set $X_{0}=S_{0}=0$, and $X_{1}$ to be a Rademacher$(q)$ random variable for a prespecified $q \in [0,1]$, i.e.\
\[
 X_{1} = 
  \begin{cases} 
   1 & \text{with probability } q, \\
   -1 & \text{with probability } (1-q).
  \end{cases}
\]
For each $n \in \mathbb{N}$, we define $U_{n,i}$, for $i \in [k]$, to be independent and identically distributed random variables, with each $U_{n,i}$ distributed uniformly over the support $[n]$. Evidently, the set $\{U_{n,1},U_{n,2},\ldots,U_{n,k}\}$ can be interpreted as a sample, drawn with replacement, from among the first $n$ customers (equivalently, it is a set of time-stamps, drawn with replacement, from $[n]$, which is the past up to and including time-stamp $n$). We let $\mathcal{F}_{n}$ indicate the $\sigma$-field consisting of all information pertaining to our stochastic process up to and including time-stamp $n$. Conditioned on $\mathcal{F}_{n}$ and the value of $U_{n,i}$ for each $i \in [k]$, we define
\begin{equation}\label{X_{n+1}_distribution}
X_{n+1}=  
  \begin{cases} 
   1 & \text{with probability } pf\left(C_{n}^{+}/k\right)+(1-p)\left\{1-f\left(C_{n}^{+}/k\right)\right\}, \\
   -1 & \text{with probability } (1-p)f\left(C_{n}^{+}/k\right)+p\left\{1-f\left(C_{n}^{+}/k\right)\right\},
  \end{cases}
\end{equation}
where $p \in (0,1)\setminus\{1/2\}$ is a prespecified parameter that is often referred to as the \emph{memory parameter} or \emph{self-excitation parameter} in the literature, $f:[0,1] \rightarrow [0,1]$ is a predefined function that may be referred to as a \emph{reinforcement function} or a \emph{replacement function} (see \cite{franchini2017large,franchini2023large,ruszel2024positive} among others to find different examples of function classes to which $f$ could belong) when the stochastic process is viewed as a generalized urn model, and (henceforth, for any event $A$, letting $\chi\{A\}$ denote the indicator random variable for $A$) 
\begin{equation}
C_{n}^{+}=\sum_{i=1}^{k}\chi\left\{X_{U_{n,i}}=1\right\}=\frac{1}{2}\sum_{i=1}^{k}\left(X_{U_{n,i}}+1\right)\label{C_{n}^{+}}
\end{equation}
is the number of occurrences of $+1$ in the sample drawn (in the context of customers making decisions based on responses sampled from past customers, $C_{n}^{+}$ equals the number of customers sampled, up to and including time-stamp $n$, who chose product $P_{1}$, whereas when the model is studied as an elephant random walk, $C_{n}^{+}$ equals the number of $(+1)$-valued steps that the elephant samples from the past, up to and including time-stampe $n$). We set
\begin{equation}
S_{n}=\sum_{i=1}^{n}X_{i},\label{S_{n}}
\end{equation}
so that $S_{n}$ denotes the position of the elephant (i.e.\ its displacement from the origin, which is where the walk begins from) at time-stamp $n$, or, equivalently, the relative performance of product $P_{1}$, in comparison to product $P_{2}$, up to and including time-stamp $n$. Note that it makes sense to exclude the case of $p=1/2$, since this value of $p$ boils our stochastic process down to the ordinary simple symmetric random walk on $\mathbb{Z}$, and this has already been thoroughly studied in the literature.

The description of this stochastic process remains the same even when we replace $k$ by $k(n)$, a function that maps the set $\mathbb{N}$ of all positive integers to itself, with the restriction that $k(n) \leqslant n$. As mentioned in \S\ref{sec:intro}, we are primarily interested in $k(n)$ that grows with $n$ as $n$ approaches $\infty$.

\subsection{Model involving sampling without replacement}\label{subsec:without} The description of the model remains nearly the same as we consider a sampling scheme that is without replacement. Here, we begin with $X_{0}=0$, and random variables $X_{1}, X_{2}, \ldots, X_{k}$ that are independent and identically distributed as Rademacher$(q)$. For each $n \geqslant k$, at time-stamp $(n+1)$, a sample $(U_{n,1}, U_{n,2}, \ldots, U_{n,k})$, is drawn, without replacement, from the set $[n]$ of past indices (or time-stamps). Evidently, $\left(U_{n,1}, U_{n,2}, \ldots, U_{n,k}\right)$ denotes a $k$-dimensional random ordered tuple whose support is given by the set of all ordered $k$-tuples in which all the coordinates are distinct and each coordinate belongs to $[n]$, such that
\begin{equation}
\Prob\left[\left(U_{n,1}, U_{n,2}, \ldots, U_{n,k}\right)=(i_{1},i_{2},\ldots,i_{k})\right]=\left\{{n \choose k}k!\right\}^{-1} \text{ for all distinct indices } i_{1},i_{2},\ldots,i_{k}\in [n].\label{without_replacement_distribution}
\end{equation}
We define $C_{n}^{+}$, conditioned on $\mathcal{F}_{n}$, to be as in \eqref{C_{n}^{+}}, and define the distribution of $X_{n+1}$, conditioned on $\mathcal{F}_{n}$ and the realized value of $(U_{n,1}, U_{n,2}, \ldots, U_{n,k})$, as in \eqref{X_{n+1}_distribution}. Finally, we define $S_{n}$ to be as in \eqref{S_{n}}.

Once again, we consider the version of this model where $k=k(n)$ is a function of $n$ (and that grows with $n$ as $n$ approaches $\infty$, with the restriction that $k(n) \leqslant n$ for each $n$). We assume that $k(n)$ is a monotonically increasing function of $n$, and we write $k(n)\uparrow\infty$ to indicate that the sequence $\{k(n)\}$ approaches $\infty$ as $n\rightarrow\infty$. The assumption that $k(n)\leqslant n$, for all $n\in\mathbb{N}$, allows us to let $X_{1}$ follow Rademacher$(q)$, and the distribution of $X_{n+1}$, for each $n\geqslant 1$, to be exactly as in \eqref{X_{n+1}_distribution}, where $C_{n}^{+}$, conditioned on $\mathcal{F}_{n}$, is as defined in \eqref{C_{n}^{+}}.

\section{Review of literature pertinent to the work in this paper}\label{sec:lit_review}
We now turn to a brief review of the key developments in the literature related to the elephant random walk (ERW) model. The classical ERW model was proposed by \cite{schutz2004elephants} with the motivation of introducing long memory into random walk models and studying how, in such cases, the asymptotic behavior differs from the more traditional simple random walk models where the walker takes each step independent of the past. In \cite{schutz2004elephants}, a one-dimensional random walk $\{S_n\}_{n\in\mathbb{Z}_+}$ on the $\mathbb{Z}$-lattice was considered, wherein the walker starts from a position $S_0$ (the general convention is to take $S_0=0$) at time-stamp $t=0$, and during the first time-stamp, the walker takes a forward (i.e.\ $+1$) step with probability $q$ and a backward (i.e.\ $-1$) step with probability $(1-q)$, where $q\in (0,1)$ is prespecified. At all subsequent time-stamps, the walker keeps a complete memory of all of the steps taken so far. At time-stamp $(n+1)$, for each $n \in \mathbb{N}$, the walker takes a random step of unit size, either forward or backward, so that $S_{n+1}=S_n+X_{n+1}$, where $\{X_n: n \in \mathbb{N}, n \geqslant 2\}$ is a sequence of $(\pm 1)$-valued random variables defined recursively as follows:
\begin{equation*}
    X_{n+1}=\begin{cases}
        X_{U_n}\text{ with probability }p,\\
        -X_{U_n}\text{ with probability }(1-p),
    \end{cases}
\end{equation*}
where $U_n$ is a randomly chosen time-stamp from the past, i.e.\ $[n]$, and $p\in (0,1)$ is referred to, in various contexts, as the \textit{trust parameter}, the \emph{memory parameter} or the \emph{self-excitation parameter}, and its value may differ from that of $q$. Several extensions and variations of the classical ERW have since been proposed. For instance, \cite{gut2021variations} introduced a framework where the walker remembers only a few of the most recent steps, certain distant steps from the past, or a combination of both. In a related development, \cite{gut2022elephant} proposed a model where the memory of the walker grows over time. A ``lazy” version of the walk, wherein the walker may, during any time-stamp, choose to stay in the same position as they were in during the previous time-stamp, instead of moving forward or backward, was proposed by \cite{gut2021number}. Random step sizes were incorporated into the model by \cite{dedecker2023rates} and \cite{fan2024cramer}, while \cite{dhillon2025elephantrandomwalkrandom} recently introduced a version in which the memory itself is random. Reinforced variations of the walk have been explored in works such as \cite{laulin2022new} and \cite{laulin2022introducing}. Multidimensional extensions of the elephant random walk have also received considerable attention \citep[see, e.g.,][]{bercu2025multidimensionalelephantrandomwalk, bertenghi2022functional, marquioni2019multidimensional,maulik2024asymptotic}. Other significant variants of the walk can be found in works such as \citep{bercu2022elephant,harbola2014memory,harris2015random,kim2014anomalous,kursten2016random,roy2024elephant,nakano2025elephant,nakano2025limit, serva2013scaling}. More recently, \cite{majumdar2025limit} investigated limit theorems for a broad class of step-reinforced random walks that includes the classical ERW as a special case. Motivated by earlier studies such as \cite{kursten2016random} and \cite{kim2014anomalous}, they analyzed a model in which the walker either repeats a previously taken step or takes a new step with complementary probability. 

The asymptotic behavior of the ERW is well investigated in the literature \citep{boyer2014solvable, coletti2017central, cressoni2013exact, kumar2010memory}. In this context, \cite{da2013non} proposed strong numerical evidence that the displacement distribution of ERWs in some regimes of the trust parameter $p$ is non-Gaussian as well as non-L\'evy. In an important contribution, \cite{bercu2017martingale} and \cite{laulin2022new} proposed a martingale approach in deriving the asymptotics, where results on weak convergence and large deviation are derived using the martingale central limit theorem and the law of iterated logarithms respectively. The martingale approach was also used by papers such as \cite{bercu2021center}, \cite{coletti2017central} and \cite{roy2024phase}. A powerful set of analytical tools was introduced in \cite{baur2016elephant}, which studied the asymptotic behavior of the process across various regimes by reformulating the walk as a Pólya urn scheme \citep[see][for a detailed discussion]{janson2004functional}.  In this paper, we take inspiration from existing works such as \cite{das2024elephant}, \cite{maulik2024asymptotic} and \cite{zhang2024stochastic}, to derive the asymptotic results using a stochastic approximation \citep{duflo2013random} technique. Stochastic approximation (SA), originally introduced by \cite{robbins1951stochastic} to find the root of a regression function via successive approximations, has since found wide application across various scientific domains (see \cite{kushner2010stochastic} and \cite{lai2003stochastic} for reviews). While the classical SA theory focused on almost sure convergence to a unique limit, it was later extended to address weak convergence results as well, particularly in control and optimization theory \citep{asi2019stochastic, dupuis1985stochastic, dupuis1989stochastic, zhang2024constant}. To the best of our knowledge, \cite{gangopadhyay2019stochastic} was the first to apply SA methods to analyze urn models with random replacement matrices, laying the groundwork for using SA in processes interpretable as generalized urn models, including the random walk model considered in this paper, as previously discussed. This line of work was further developed in \cite{gangopadhyay2022almost}, where the authors studied a variant of the ERW on the nonnegative integer lattice in multiple dimensions, with memory-driven reinforcement and potential delay at each step. Although their results focused on almost sure convergence, their approach motivated us to adopt SA as the central tool to analyze the convergence behavior of our proposed random walk model. A very important contribution to this literature has been recently accomplished by \cite{maulik2024asymptotic}, who developed the SA theory for multidimensional ERW.

\section{Main results of this paper}\label{sec:main_results}
We begin by enumerating the results pertaining to the model described in \S\ref{subsec:with} (i.e.\ where the sampling takes place with replacement) in \S\ref{subsec:results_with}. Following this, in \S\ref{subsec:without_results}, we state the results that are relevant for the model described in \S\ref{subsec:without} (i.e.\ where the sampling scheme is without replacement).
\subsection{When the sampling scheme is with replacement}\label{subsec:results_with} We begin by stating Theorem~\ref{thm:main_1} and Propositions~\ref{prop:main_1}, \ref{prop:main_2} and \ref{prop:main_3}, each of which is concerned with the almost sure convergence of the sequence $\{S_{n}/n\}$, where $S_{n}$ is as defined in \eqref{S_{n}}, as time $n$ approaches $\infty$, when a sample of constant size, $k$, is drawn with replacement from the past during each time-stamp:
\begin{theorem}\label{thm:main_1}
Let $\{S_{n}\}$ be the stochastic process defined as in \eqref{S_{n}}, where the underlying model is as described in \S\ref{subsec:with}, with $p\in(0,1)\setminus\{1/2\}$ and the sample size, $k$, remaining constant with $n$. Recall the function $f$ introduced via \eqref{X_{n+1}_distribution}, and let the function $g:[0,1]\rightarrow [0,1]$ be defined as
\begin{equation}\label{g_defn}
g(x)=pf(x)+(1-p)\{1-f(x)\} \text{ for each } x \in [0,1].
\end{equation}
Let us further define the function $H:[-1,1]\rightarrow [-1,1]$ as
\begin{equation}\label{H_defn}
H(x)=2^{1-k}\sum_{i=0}^{k}g\left(\frac{i}{k}\right){k \choose i}(1+x)^{i}(1-x)^{k-i}-1, \text{ for all } x \in [-1,1].
\end{equation}
If $H$ has a \emph{unique} fixed point, say $x^{*}$, in the interval $[-1,1]$, the sequence $\{S_{n}/n\}$ converges almost surely to $x^{*}$. In particular, $H$ has a unique fixed point, say $x^{*}$, in $[-1,1]$, and $\{S_{n}/n\}$ converges almost surely to $x^{*}$, if one of the following is true:
\begin{enumerate}[label=(B\arabic*), ref=B\arabic*]
\item \label{strict_decreasing} $H$ is strictly decreasing throughout $[-1,1]$;
\item \label{strict_convex} $H$ is either strictly convex throughout $[-1,1]$, or strictly concave throughout $[-1,1]$;
\item \label{contraction} $H$ is a contraction throughout $[-1,1]$. 
\end{enumerate} 
\end{theorem}

\begin{prop}\label{prop:main_1}
Consider the model in \S\ref{subsec:with} with the sample size, $k$, remaining constant with $n$. If 
\begin{enumerate}[label=(C\arabic*), ref=C\arabic*]
\item \label{C1} either $p\in(1/2,1)$ and $f(i/k)\leqslant f((i-1)/k)$ for each $i \in [k]$, with the inequality being strict for at least one $i \in [k]$,
\item \label{C2} or $p\in(0,1/2)$ holds and $f(i/k)\geqslant f((i-1)/k)$ for each $i \in [k]$, with the inequality being strict for at least one $i \in [k]$,
\end{enumerate}
then the sequence $\{S_{n}/n\}$ converges almost surely to some $x^{*} \in (-1,1)$.
\end{prop}
An example of a function that satisfies \eqref{C1} of Proposition~\ref{prop:main_1} is $f(x)=cx$ for all $x\in[0,1]$, where $0<c<p(2p-1)^{-1}$, whereas an example of a function that satisfies \eqref{C2} is $f(x)=c'(1-x)$ for all $x\in[0,1]$, where $0<c'<(1-p)(1-2p)^{-1}$.

\begin{prop}\label{prop:main_2}
In the model described in \S\ref{subsec:with}, with the sample size, $k$, remaining constant with $n$, if 
\begin{enumerate}[label=(D\arabic*), ref=D\arabic*]
\item \label{strict_convexity_f_cond} either $f((j+2)/k)+f(j/k)\geqslant 2f((j+1)/k)$ for each $j \in [k]\cup\{0\}$, with the inequality being strict for at least one $j \in [k]\cup\{0\}$, 
\item \label{strict_concavity_f_cond} or $f((j+2)/k)+f(j/k)\leqslant 2f((j+1)/k)$ for each $j \in [k]\cup\{0\}$, with the inequality being strict for at least one $j \in [k]\cup\{0\}$, 
\end{enumerate}
then the sequence $\{S_{n}/n\}$ converges almost surely to some $x^{*} \in (-1,1)$.
\end{prop}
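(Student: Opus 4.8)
The plan is to reduce the claim to an application of Theorem~\ref{thm:main_1}. Since one of \eqref{p>1/2} and \eqref{p<1/2} is assumed to hold, it suffices to verify that, under either \eqref{strict_convexity_f_cond} or \eqref{strict_concavity_f_cond}, the function $H$ satisfies condition \eqref{strict_convex}, i.e.\ that $H$ is either strictly convex throughout $(-1,1)$ or strictly concave throughout $(-1,1)$. Once \eqref{strict_convex} is established, Theorem~\ref{thm:main_1} immediately delivers the unique fixed point $x^{*}\in(-1,1)$ and the almost sure convergence of $\{S_{n}/n\}$ to it.

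The first step is to recognize $H$ as a Bernstein polynomial. Substituting $y=(1+x)/2\in[0,1]$ into \eqref{H_defn} and using $1+x=2y$, $1-x=2(1-y)$, I would write
\begin{equation*}
H(x)=2\sum_{i=0}^{k}g\!\left(\frac{i}{k}\right)\binom{k}{i}y^{i}(1-y)^{k-i}-1=2\,B_{k}(g)(y)-1,
\end{equation*}
where $B_{k}(g)$ denotes the degree-$k$ Bernstein polynomial built from the values $\{g(i/k)\}_{i=0}^{k}$. Next I would record the affine relation between $g$ and $f$ coming from \eqref{g_defn}, namely $g(x)=(2p-1)f(x)+(1-p)$, so that the second forward differences are proportional:
\begin{equation*}
\Delta^{2}g\!\left(\frac{j}{k}\right)=(2p-1)\,\Delta^{2}f\!\left(\frac{j}{k}\right),\qquad \Delta^{2}f\!\left(\frac{j}{k}\right):=f\!\left(\frac{j+2}{k}\right)-2f\!\left(\frac{j+1}{k}\right)+f\!\left(\frac{j}{k}\right).
\end{equation*}

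The crux of the argument is the classical identity that expresses the second derivative of a Bernstein polynomial through its second differences:
\begin{equation*}
B_{k}(g)''(y)=k(k-1)\sum_{j=0}^{k-2}\Delta^{2}g\!\left(\frac{j}{k}\right)\binom{k-2}{j}y^{j}(1-y)^{k-2-j}.
\end{equation*}
Combining this with the chain rule ($dy/dx=1/2$) and the proportionality above gives
\begin{equation*}
H''(x)=\tfrac{1}{2}B_{k}(g)''(y)=\tfrac{1}{2}k(k-1)(2p-1)\sum_{j=0}^{k-2}\Delta^{2}f\!\left(\frac{j}{k}\right)\binom{k-2}{j}y^{j}(1-y)^{k-2-j}.
\end{equation*}
For $x\in(-1,1)$ we have $y\in(0,1)$, so every Bernstein basis term $\binom{k-2}{j}y^{j}(1-y)^{k-2-j}$ is strictly positive. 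Hence, under \eqref{strict_convexity_f_cond} (all second differences of $f$ nonnegative, at least one strictly positive), the sum is strictly positive throughout $(-1,1)$ and the sign of $H''$ equals that of $(2p-1)$; under \eqref{strict_concavity_f_cond} the sum is strictly negative and the sign of $H''$ is opposite to that of $(2p-1)$. In each of the four pairings of \eqref{p>1/2}/\eqref{p<1/2} with \eqref{strict_convexity_f_cond}/\eqref{strict_concavity_f_cond}, $H''$ therefore retains a single strict sign on $(-1,1)$, so $H$ is either strictly convex or strictly concave there; that is, \eqref{strict_convex} holds, and Theorem~\ref{thm:main_1} concludes the proof.

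The main obstacle I anticipate is purely bookkeeping rather than conceptual: verifying the Bernstein second-difference formula (which I would either cite or derive by differentiating the basis polynomials twice), and then carefully tracking the sign of $H''$ across all four combinations of the memory-parameter condition with the convexity/concavity condition, while using the ``strict for at least one $j$'' hypothesis together with the strict positivity of the Bernstein basis on the \emph{open} interval to upgrade weak discrete convexity/concavity of $f$ to the genuinely strict convexity/concavity of $H$ demanded by \eqref{strict_convex}. (Note that $k\geqslant 2$ is implicit here, since otherwise the second-difference conditions \eqref{strict_convexity_f_cond} and \eqref{strict_concavity_f_cond} are vacuous.)
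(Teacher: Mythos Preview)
Your proposal is correct and essentially identical to the paper's proof: both compute $H''(x)$ as a positive linear combination (in the Bernstein basis) of the quantities $(2p-1)\Delta^{2}f(j/k)$, conclude strict convexity or concavity of $H$ on $(-1,1)$ from the sign hypotheses, and then invoke Theorem~\ref{thm:main_1} via condition \eqref{strict_convex}. The only cosmetic difference is that you pass to the variable $y=(1+x)/2$ and cite the Bernstein second-derivative identity, whereas the paper differentiates \eqref{H_derivative} directly in $x$; the resulting formulas agree after the substitution.
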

An example of a function $f$ that satisfies the hypothesis of Proposition~\ref{prop:main_2} is $f(x)=c'' e^{x}$ for all $x\in[0,1]$, where $c''>0$ obeys either the inequality $c''<(1-p)(1-2p)^{-1}$, or the inequality $c''<p(2p-1)^{-1}e^{-1}$.

\begin{prop}\label{prop:main_3}
When the underlying model is as described in \S\ref{subsec:with}, with the sample size, $k$, remaining constant with $n$, the sequence $\{S_{n}/n\}$ converges almost surely to some $x^{*} \in (-1,1)$ whenever one of the following two conditions holds:
\begin{enumerate}[label=(E\arabic*), ref=E\arabic*]
\item $1/2<p<1/2+1/(2k)$ and $f(1)<p/(2p-1)$,
\item or $1/2-1/(2k)<p<1/2$ and $f(0)<(1-p)/(1-2p)$.
\end{enumerate}
\end{prop}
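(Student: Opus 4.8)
The plan is to derive Proposition~\ref{prop:main_3} as a special case of Theorem~\ref{thm:main_1}, by verifying that each of conditions (E1) and (E2) forces the function $H$, defined in \eqref{H_defn}, to be a contraction throughout $(-1,1)$. Once the contraction property \eqref{contraction} is established, the existence and uniqueness of a fixed point $x^{*}\in(-1,1)$, together with the almost sure convergence of $\{S_{n}/n\}$ to $x^{*}$, follows immediately from Theorem~\ref{thm:main_1}; one only needs to check that the relevant one of \eqref{p>1/2}, \eqref{p<1/2} also holds, which is transparent since (E1) asserts $p>1/2$ with $f(1)<p/(2p-1)$, matching \eqref{p>1/2} exactly, and symmetrically (E2) matches \eqref{p<1/2}.

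First I would compute $H'(x)$ explicitly. Differentiating \eqref{H_defn} term-by-term and using the standard identity for the derivative of the Bernstein-type polynomials $\binom{k}{i}(1+x)^{i}(1-x)^{k-i}$, the derivative telescopes into a sum whose coefficients are the forward differences $g((i+1)/k)-g(i/k)$. Writing $g(x)=pf(x)+(1-p)\{1-f(x)\}=(1-p)+(2p-1)f(x)$ from \eqref{g_defn}, each such difference equals $(2p-1)\{f((i+1)/k)-f(i/k)\}$. The key structural observation is that $|H'(x)|$ is controlled by a convex combination (with weights summing to $1$, coming from the binomial probabilities evaluated at the point $(1+x)/2\in(0,1)$) of the quantities $k(2p-1)\{f((i+1)/k)-f(i/k)\}$, so that
\begin{equation*}
\sup_{x\in(-1,1)}|H'(x)|\leqslant k\,|2p-1|\max_{0\leqslant i\leqslant k-1}\left|f\left(\frac{i+1}{k}\right)-f\left(\frac{i}{k}\right)\right|.
\end{equation*}
Since $f$ maps into $[0,1]$, each consecutive difference lies in $[-1,1]$, so the maximum is at most $1$, giving $\sup_{x}|H'(x)|\leqslant k|2p-1|$.

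Next I would use the hypotheses on $p$. Under (E1), the constraint $1/2<p<1/2+1/(2k)$ gives $0<2p-1<1/k$, hence $k|2p-1|<1$, so $H$ is a contraction; under (E2), the constraint $1/2-1/(2k)<p<1/2$ gives $0<1-2p<1/k$, again yielding $k|2p-1|<1$. In either case $\sup_{x\in(-1,1)}|H'(x)|<1$, which is precisely the contraction condition \eqref{contraction}. The additional inequalities $f(1)<p/(2p-1)$ in (E1) and $f(0)<(1-p)/(1-2p)$ in (E2) are exactly what is needed to invoke \eqref{p>1/2} and \eqref{p<1/2} respectively, so both hypotheses of Theorem~\ref{thm:main_1} are met.

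The main obstacle I anticipate is the bookkeeping in the derivative computation: one must carefully reindex the differentiated binomial sum so that the boundary terms (corresponding to $i=0$ and $i=k$) are handled correctly and the whole expression collapses to a manifestly nonnegative combination of the forward differences of $g$. A clean way to avoid sign errors is to recognize $H(x)+1$ as $2$ times the expectation $\E[g(B/k)]$, where $B$ is a $\mathrm{Binomial}(k,(1+x)/2)$ random variable, and to differentiate under the expectation using the score function $\frac{d}{dx}\log\Prob[B=i]$; this reduces the estimate on $|H'(x)|$ to a covariance bound, making the factor $k|2p-1|$ appear naturally and the role of the range $[0,1]$ of $f$ transparent. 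Apart from this, the proof is routine once the uniform bound on $|H'|$ is in hand.
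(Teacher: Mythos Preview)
Your proposal is correct and follows essentially the same route as the paper's proof: compute $H'(x)$ as a weighted sum of the forward differences $(2p-1)\{f((i+1)/k)-f(i/k)\}$, bound each difference in absolute value by $1$ (since $f$ takes values in $[0,1]$) to obtain $\sup_{x}|H'(x)|\leqslant k|2p-1|$, observe that the hypotheses on $p$ force $k|2p-1|<1$, and conclude via the mean value theorem that $H$ is a contraction, so that \eqref{contraction} together with the matching choice among \eqref{p>1/2}, \eqref{p<1/2} lets you invoke Theorem~\ref{thm:main_1}. The paper bounds each term by $1$ directly inside the triangle inequality rather than first pulling out a maximum, but the two computations are equivalent.
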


Theorem~\ref{thm:main_2} concerns itself with the distributional convergence of a suitably scaled version of the sequence $\{S_{n}\}$ when the underlying model is as described in \S\ref{subsec:with} and the sample size, $k$, remains constant as $n$ varies. For a given sequence $\{W_{n}\}$ of random variables, writing $W_{n}\xlongrightarrow{D}N(\mu,\sigma^{2})$, for any $\mu\in\mathbb{R}$ and $\sigma>0$, indicates that $\{W_{n}\}$ converges in distribution to a normal random variable with mean $\mu$ and variance $\sigma^{2}$.
\begin{theorem}\label{thm:main_2}
Consider the sequence $\{S_{n}\}$ arising from the model described in \S\ref{subsec:with}, where $k$ has been kept constant as $n$ varies, and $p\in(0,1)\setminus\{1/2\}$. If the function $H$, defined in \eqref{H_defn}, has a unique fixed point, say $x^{*}$, in $[-1,1]$, then, setting $\tau=1-H'(x^{*})>0$, we have
\begin{align}
{}&\sqrt{n}\left(S_{n}/n-x^{*}\right)\xlongrightarrow{\text{D}}N\left(0,\left(1-{x^{*}}^{2}\right)(2\tau-1)^{-1}\right) \quad \text{when} \quad \tau>1/2,\nonumber\\
{}& \sqrt{n/\ln n}\left(S_{n}/n-x^{*}\right)\xlongrightarrow{\text{D}}N\left(0,1-{x^{*}}^{2}\right) \quad \text{when} \quad \tau=1/2,\nonumber
\end{align}
and $n^{\tau}\left(S_{n}/n-x^{*}\right)$ converges almost surely to a finite random variable when $\tau<1/2$. If one of \eqref{strict_decreasing}, \eqref{strict_convex} and \eqref{contraction} holds, the conclusion drawn above remains true. Even more specifically, if the hypothesis stated in one of Propositions~\ref{prop:main_1}, \ref{prop:main_2} and \ref{prop:main_3} is true, then too, the conclusion drawn above is true. 
\end{theorem}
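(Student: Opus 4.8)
The plan is to recast the normalized walk $\theta_n:=S_n/n$ as a one-dimensional Robbins--Monro stochastic approximation scheme and then invoke the classical central limit theorem for such schemes, whose trichotomy (governed by comparing the stability exponent with $1/2$) reproduces exactly the three conclusions in the statement.

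First I would identify the drift. Conditioned on $\mathcal{F}_n$, each sampled indicator $\chi\{X_{U_{n,i}}=1\}$ is Bernoulli with success probability equal to the current proportion of $(+1)$-steps, namely $p_n=(1+S_n/n)/2$; since the draw is with replacement, the $k$ indicators are i.i.d., so $C_n^{+}\mid\mathcal{F}_n\sim\bin(k,p_n)$. Using \eqref{X_{n+1}_distribution} and \eqref{g_defn} one gets $\Prob[X_{n+1}=1\mid\mathcal{F}_n,U]=g(C_n^{+}/k)$, whence $\E[X_{n+1}\mid\mathcal{F}_n]=2\,\E[g(C_n^{+}/k)\mid\mathcal{F}_n]-1$. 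Averaging the binomial and substituting $p_n=(1+S_n/n)/2$ recovers precisely $\E[X_{n+1}\mid\mathcal{F}_n]=H(S_n/n)$, with $H$ as in \eqref{H_defn}. Writing $\theta_{n+1}-\theta_n=\frac{1}{n+1}(X_{n+1}-\theta_n)$ and splitting $X_{n+1}=H(\theta_n)+\xi_{n+1}$ into its conditional mean and the martingale difference $\xi_{n+1}:=X_{n+1}-H(\theta_n)$, I obtain the standard form
\begin{equation*}
\theta_{n+1}=\theta_n+\frac{1}{n+1}\bigl(h(\theta_n)+\xi_{n+1}\bigr),\qquad h(x):=H(x)-x,
\end{equation*}
a Robbins--Monro recursion with gain $\gamma_{n+1}=1/(n+1)$ whose equilibria are exactly the fixed points of $H$.

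Next I would assemble the three ingredients required by the stochastic approximation CLT. (i) \emph{Almost sure convergence} $\theta_n\to x^{*}$: this is supplied by Theorem~\ref{thm:main_1}, whose hypotheses coincide with those assumed here (one of \eqref{p>1/2}, \eqref{p<1/2} together with uniqueness of the fixed point, or one of \eqref{strict_decreasing}--\eqref{contraction}, or the hypothesis of one of Propositions~\ref{prop:main_1}--\ref{prop:main_3}). (ii) \emph{Stability}: since $H$ is a polynomial it is smooth, and linearizing $h$ at $x^{*}$ gives $h'(x^{*})=H'(x^{*})-1=-\tau$ with $\tau=1-H'(x^{*})>0$, so that $x^{*}$ is an attracting equilibrium. (iii) \emph{Noise control}: because $X_{n+1}\in\{\pm1\}$ we have $|\xi_{n+1}|\le 2$ almost surely, so every Lyapunov/Lindeberg moment condition is automatic, while the conditional variance satisfies
\begin{equation*}
\E\bigl[\xi_{n+1}^{2}\mid\mathcal{F}_n\bigr]=1-H(\theta_n)^{2}\;\convas\;1-{x^{*}}^{2}=:\sigma^{2},
\end{equation*}
using continuity of $H$ and $H(x^{*})=x^{*}$.

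With these verified, the conclusion follows from the well-documented trichotomy for one-dimensional Robbins--Monro schemes with step size $1/n$ (see \citep{duflo2013random}, and the applications in \citep{maulik2024asymptotic,zhang2024stochastic}): comparing the effective contraction rate $\tau$ with the $1/2$ produced by the $1/n$ gain, one gets (a) for $\tau>1/2$ a Gaussian limit with variance $\sigma^{2}/(2\tau-1)$, i.e.\ $S_n/\sqrt{n}-\sqrt{n}\,x^{*}\xlongrightarrow{\text{D}}N\!\left(0,(1-{x^{*}}^{2})(2\tau-1)^{-1}\right)$; (b) at the critical value $\tau=1/2$ an additional logarithmic slow-down, giving $\sqrt{n/\ln n}\,(S_n/n-x^{*})\xlongrightarrow{\text{D}}N(0,1-{x^{*}}^{2})$; and (c) for $\tau<1/2$ the slower rate $n^{\tau}$, along which $n^{\tau}(\theta_n-x^{*})=n^{\tau-1}S_n-n^{\tau}x^{*}$ converges almost surely to a finite random variable. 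I expect the main obstacle to be bookkeeping rather than conceptual: one must match the hypotheses of the cited CLT precisely---in particular confirming the stability $\tau>0$ and the almost sure localization at $x^{*}$ that licenses replacing $\E[\xi_{n+1}^{2}\mid\mathcal{F}_n]$ by $\sigma^{2}$---and carefully separate the three regimes, the most delicate being the boundary case $\tau=1/2$ with its $\ln n$ correction.
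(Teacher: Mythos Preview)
Your proposal is correct and follows essentially the same route as the paper: both write $\theta_n=S_n/n$ as a Robbins--Monro recursion with drift $h(x)=H(x)-x$ and martingale noise $\xi_{n+1}=X_{n+1}-H(\theta_n)$, verify that $F=h$ is smooth with $h'(x^{*})=-\tau$, that the conditional variance is $\Gamma(x)=1-H^2(x)$ with $\Gamma(x^{*})=1-{x^{*}}^{2}$, and that the bounded increments give the required higher-moment control, and then read off the three regimes from Duflo's Theorem~2.2.12. The paper additionally spells out why $\tau>0$ (the curve $y=H(x)$ crosses $y=x$ from above to below at $x^{*}$, forcing $H'(x^{*})<1$), a point you flag as bookkeeping but should make explicit.
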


Henceforth, by the notation $f \in \mathcal{C}^{k}[0,1]$, for any function $f:[0,1]\rightarrow \mathbb{R}$ and any $k\in\mathbb{N}$, we mean that the $k$-th derivative of $f$, i.e.\ $f^{(k)}$, exists and is continuous throughout $[0,1]$ (when $k=1$, we write $f^{(1)}=f'$, and when $k=2$, we write $f^{(2)}=f''$); by $f \in \mathcal{C}^{0}[0,1]$ we mean that $f$ is continuous throughout $[0,1]$. The next two results, i.e.\ Theorems~\ref{thm:main_3} and \ref{thm:main_4}, concern themselves with the strong and weak convergence of our stochastic process when the sampling scheme is with replacement and the sample size, $k(n)$, grows with $n$.
\begin{theorem}\label{thm:main_3}
Consider the model described in \S\ref{subsec:with}, with $p\in(0,1)\setminus\{1/2\}$, where the sampling takes place with replacement, but this time, the sample size, $k=k(n)$, is increasing in $n$, which we, henceforth, write as $k(n)\uparrow \infty$ as $n\rightarrow \infty$. We assume that one of the following conditions holds:
\begin{enumerate}[label=(F\arabic*), ref=F\arabic*]
\item \label{D1} The function $f\in\mathcal{C}^{0}[0,1]$, and that the series
\begin{equation}
\sum_{n=1}^{\infty}\frac{1}{n+1}\sup\left\{\left|f(x)-f(y)\right|:x,y\in[0,1],|x-y|<k(n)^{-1/2}\right\} \text{ converges.}\label{series_convergence_criterion_1}
\end{equation}
\item \label{D2} The function $f$ is H\"{o}lder-continuous with exponent $\alpha$ and constant $L$, for some $0< \alpha \leqslant 1$, i.e.\ the function $f$ satisfies the inequality $\left|f(x)-f(y)\right|\leqslant L|x-y|^{\alpha}$ for all $x, y \in [0,1]$, and the series 
\begin{equation}
\sum_{n=1}^{\infty}(n+1)^{-1}k(n)^{-\alpha/2} \text{ converges.}\label{series_convergence_criterion_2}
\end{equation}
\item \label{D3} The function $f\in \mathcal{C}^{1}[0,1]$ and the series 
\begin{equation}
\sum_{n=1}^{\infty}\frac{1}{(n+1)\sqrt{k(n)}}\sup\left\{\left|f'(x)-f'(y)\right|:x,y\in[0,1],|x-y|<k(n)^{-1/2}\right\} \text{ converges}.\label{series_convergence_criterion_3}
\end{equation}
\item \label{D4} The function $f\in\mathcal{C}^{2}[0,1]$, and the series
\begin{equation}
\sum_{n=1}^{\infty}(n+1)^{-1}k(n)^{-1} \text{ converges}. \label{series_convergence_criterion_4}
\end{equation}
\end{enumerate}
If the function $g$, defined in \eqref{g_defn}, has a unique fixed point, $x^{*}$, in $[0,1]$, the sequence $\{n^{-1}S_{n}\}$ converges almost surely to $(2x^{*}-1)$. More specifically, this conclusion remains true if one of \eqref{D1}, \eqref{D2}, \eqref{D3} and \eqref{D4} holds, and one of the following is true:
\begin{enumerate}[label=(G\arabic*), ref=G\arabic*]
\item \label{strict_decreasing_1} the function $f\in\mathcal{C}^{1}[0,1]$, and either $p\in(1/2,1)$ and $f$ is strictly decreasing over $[0,1]$, or $p\in(0,1/2)$ and $f$ is strictly increasing over $[0,1]$,
\item \label{strict_convex_1} the function $f\in\mathcal{C}^{2}[0,1]$, and either $f$ is strictly convex throughout $(0,1)$, or $f$ is strictly concave throughout $(0,1)$,
\item \label{contraction_1} the function $f$ is Lipschitz, with $|f(x)-f(y)|\leqslant c|x-y|$ for all $x, y \in [0,1]$, such that $c|2p-1|<1$.
\end{enumerate}
\end{theorem}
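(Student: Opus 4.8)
The plan is to recast the evolution of the proportion of $(+1)$-steps as a Robbins--Monro stochastic approximation scheme and then invoke the ODE/Lyapunov method. Write $Z_{n}=(n+S_{n})/(2n)$ for the fraction of $(+1)$-valued steps among $X_{1},\dots,X_{n}$, so that $S_{n}/n=2Z_{n}-1$ and it suffices to prove $Z_{n}\to x^{*}$ almost surely. A direct computation gives the recursion
\[
Z_{n+1}=Z_{n}+\frac{1}{n+1}\bigl(\chi\{X_{n+1}=1\}-Z_{n}\bigr),
\]
with step size $\gamma_{n}=(n+1)^{-1}$. Conditioned on $\mathcal{F}_{n}$, the count $C_{n}^{+}$ is $\mathrm{Binomial}(k(n),Z_{n})$, since sampling is with replacement and a uniform draw from $[n]$ returns a $(+1)$ with probability $Z_{n}$; hence
\[
\Prob[X_{n+1}=1\mid\mathcal{F}_{n}]=\E\bigl[g(C_{n}^{+}/k(n))\mid\mathcal{F}_{n}\bigr]=\sum_{i=0}^{k(n)}g\!\left(\tfrac{i}{k(n)}\right)\binom{k(n)}{i}Z_{n}^{i}(1-Z_{n})^{k(n)-i}=:B_{k(n)}(g)(Z_{n}),
\]
the degree-$k(n)$ Bernstein polynomial of $g$ evaluated at $Z_{n}$ (this equals $\tfrac12\bigl(H(2Z_{n}-1)+1\bigr)$ with $k=k(n)$ in \eqref{H_defn}). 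I would then decompose the increment as
\[
\chi\{X_{n+1}=1\}-Z_{n}=\underbrace{\bigl(g(Z_{n})-Z_{n}\bigr)}_{=:h(Z_{n})}+\underbrace{\bigl(B_{k(n)}(g)(Z_{n})-g(Z_{n})\bigr)}_{=:\beta_{n}}+\underbrace{\bigl(\chi\{X_{n+1}=1\}-B_{k(n)}(g)(Z_{n})\bigr)}_{=:\xi_{n+1}},
\]
so that $h$ is the genuine mean field, $\xi_{n+1}$ is a bounded martingale-difference noise, and $\beta_{n}$ is a bias arising from approximating $g$ by its Bernstein polynomial.

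The noise is routine: since $|\xi_{n+1}|\leqslant 1$ and $\sum_{n}\gamma_{n}^{2}<\infty$, the martingale $\sum_{n}\gamma_{n}\xi_{n+1}$ is $L^{2}$-bounded and converges almost surely. The crux is controlling the bias uniformly in the unknown $Z_{n}$, and this is exactly what the four hypotheses are engineered to do. Using $g=(1-p)+(2p-1)f$, one has $\beta_{n}=(2p-1)\bigl(B_{k(n)}(f)(Z_{n})-f(Z_{n})\bigr)$, so $|\beta_{n}|\leqslant|2p-1|\,\|B_{k(n)}(f)-f\|_{\infty}$, and I would call on the quantitative Bernstein estimates: for $f$ merely continuous, $\|B_{m}(f)-f\|_{\infty}\leqslant C\,\omega_{f}(m^{-1/2})$ with $\omega_{f}$ the modulus of continuity; for $f\in\mathcal C^{1}$, $\|B_{m}(f)-f\|_{\infty}\leqslant C\,m^{-1/2}\omega_{f'}(m^{-1/2})$; and for $f\in\mathcal C^{2}$, $\|B_{m}(f)-f\|_{\infty}\leqslant C\,m^{-1}\|f''\|_{\infty}$ (via $x(1-x)\leqslant1/4$). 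Setting $m=k(n)$, the summability of $\sum_{n}\gamma_{n}|\beta_{n}|$ is guaranteed, respectively, by \eqref{series_convergence_criterion_1} under \eqref{D1} (with \eqref{D2} the special case $\omega_{f}(\delta)\leqslant L\delta^{\alpha}$), by \eqref{series_convergence_criterion_3} under \eqref{D3}, and by \eqref{series_convergence_criterion_4} under \eqref{D4}. Thus $\sum_{n}\gamma_{n}|\beta_{n}|<\infty$ in every case.

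With the noise summable after weighting and the bias absolutely summable, I would conclude via a Lyapunov/Robbins--Siegmund argument. Taking $L(z)=(z-x^{*})^{2}$, the mean-field contribution to $\E[L(Z_{n+1})-L(Z_{n})\mid\mathcal{F}_{n}]$ is of order $2\gamma_{n}h(Z_{n})(Z_{n}-x^{*})\leqslant 0$ by the standing sign condition $\{g(x)-x\}(x-x^{*})<0$, while the bias and $\gamma_{n}^{2}$ terms are summable. The Robbins--Siegmund almost-supermartingale theorem then yields that $L(Z_{n})$ converges almost surely and that $\sum_{n}\gamma_{n}\bigl(-h(Z_{n})(Z_{n}-x^{*})\bigr)<\infty$; since $\sum_{n}\gamma_{n}=\infty$ and $x^{*}$ is the only zero of $z\mapsto -h(z)(z-x^{*})$, a subsequence along which this quantity vanishes must approach $x^{*}$, forcing the limit of $L(Z_{n})$ to be $0$, whence $Z_{n}\to x^{*}$ and $S_{n}/n=2Z_{n}-1\to 2x^{*}-1$ almost surely. (Equivalently, one may quote the stochastic-approximation theorems of \cite{duflo2013random} or \cite{maulik2024asymptotic}.) I expect the main obstacle to be precisely this bias step: unlike the fixed-$k$ case, the conditional drift is the moving target $B_{k(n)}(g)$ rather than a single fixed map, and the heart of the matter is to show that the Bernstein error, weighted by $\gamma_{n}$, is summable under each smoothness/growth trade-off \eqref{D1}--\eqref{D4}.

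For the two specializations, to see that uniqueness of the fixed point already forces the sign condition, note $h=g-\mathrm{id}$ is continuous with $h(0)=g(0)\geqslant0$ and $h(1)=g(1)-1\leqslant0$; a single zero $x^{*}$ then forces $h>0$ on $[0,x^{*})$ and $h<0$ on $(x^{*},1]$ by the intermediate value theorem, i.e.\ $\{g(x)-x\}(x-x^{*})<0$ off $x^{*}$. To see that each of \eqref{strict_decreasing_1}--\eqref{contraction_1} guarantees a unique fixed point: under \eqref{strict_decreasing_1} the map $g=(1-p)+(2p-1)f$ is strictly decreasing (coefficient $2p-1>0$ times a decreasing $f$ when $p>1/2$, coefficient $2p-1<0$ times an increasing $f$ when $p<1/2$), so $h$ is strictly decreasing with exactly one zero; under \eqref{strict_convex_1} the map $g$ is strictly convex or concave, so $h$ is too, and the boundary signs $h(0)\geqslant0\geqslant h(1)$ pin down a single crossing; and under \eqref{contraction_1} the bound $|2p-1|c<1$ makes $g$ a contraction, so Banach's fixed point theorem supplies a unique $x^{*}$. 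In each case the sign condition holds, and the convergence established above applies.
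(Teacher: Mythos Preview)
Your proposal is correct and follows essentially the same approach as the paper: a stochastic-approximation decomposition of the recursion into the mean-field drift $g-\mathrm{id}$, a bounded martingale-difference noise, and a bias term controlled via the standard quantitative Bernstein-polynomial approximation rates under each of \eqref{D1}--\eqref{D4}, followed by the same boundary/uniqueness arguments for the specializations. The only cosmetic differences are that you work with the proportion $Z_{n}=(n+S_{n})/(2n)\in[0,1]$ rather than $S_{n}/n\in[-1,1]$ and spell out the Robbins--Siegmund/Lyapunov argument directly, whereas the paper invokes Proposition~1.2.3 of \cite{duflo2013random} (which you also cite as an alternative).
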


\begin{theorem}\label{thm:main_4}
Consider the model described in \S\ref{subsec:with}, with $p\in(0,1)\setminus\{1/2\}$ and the sample size $k(n)\uparrow \infty$ as $n\rightarrow \infty$. We assume that the function $f$, introduced via \eqref{X_{n+1}_distribution}, is in $\mathcal{C}^{2}[0,1]$, and the function $g$, defined in \eqref{g_defn}, has a unique fixed point $x^{*}\in[0,1]$. Letting $\tau=1-g'(x^{*})>0$, we assume that 
\begin{equation}\label{series_convergence_criterion_5}
\text{when } \tau<1/2:
\begin{cases}
&\sum_{i=1}^{n}(i+1)^{\tau-1}k(i)^{-1} \text{ converges as } n\rightarrow \infty,\\
&\sum_{i=1}^{n}(i+1)^{\tau-1}\sum_{t=1}^{i-1}(t+1)^{-1}k(t)^{-1} \text{ converges as } n\rightarrow \infty,
\end{cases}
\end{equation}
\begin{equation}\label{series_convergence_criterion_6}
\text{when } \tau=1/2:
\begin{cases}
&(\ln n)^{-1/2}\sum_{i=1}^{n}(i+1)^{-1/2}k(i)^{-1} \rightarrow 0 \text{ as } n \rightarrow \infty,\\
&(\ln n)^{-1/2}\sum_{i=1}^{n}(i+1)^{-1/2}\sum_{t=1}^{i-1}(t+1)^{-1}k(t)^{-1} \rightarrow 0 \text{ as } n \rightarrow \infty,
\end{cases}
\end{equation}
\begin{equation}\label{series_convergence_criterion_7}
\text{when } \tau>1/2:
\begin{cases}
&n^{1/2-\tau}\sum_{i=1}^{n}(i+1)^{\tau-1}k(i)^{-1} \rightarrow 0 \text{ as } n \rightarrow \infty,\\
&n^{1/2-\tau}\sum_{i=1}^{n}(i+1)^{\tau-1}\sum_{t=1}^{i-1}(t+1)^{-1}k(t)^{-1} \rightarrow 0 \text{ as } n \rightarrow \infty.
\end{cases}
\end{equation}
In addition, we assume that \eqref{D4} is true for all values of $\tau$. Then we have
\begin{equation}\label{eq:distributional_convergence_with_replacement_growing_size}
\begin{cases}
{}&n^{\tau}\left\{S_{n}/n-(2x^{*}-1)\right\} \text{ converges to a finite random variable almost surely when } \tau<1/2,\\
{}&\sqrt{n/\ln n}\left\{S_{n}/n-(2x^{*}-1)\right\} \xlongrightarrow{D} N\left(0,4x^{*}(1-x^{*})\right) \text{ when } \tau=1/2,\\
{}&\sqrt{n}\left\{S_{n}/n-(2x^{*}-1)\right\} \xlongrightarrow{D} N\left(0,4x^{*}(1-x^{*})/(2\tau-1)\right) \text{ when } \tau>1/2.\end{cases}
\end{equation}
\end{theorem}
Note that the uniqueness of the fixed point of $g$ in $[0,1]$ is guaranteed whenever one of \eqref{strict_decreasing_1}, \eqref{strict_convex_1} and \eqref{contraction_1} holds, along with $p\in(0,1)$. When $\tau<1/2$, the first criterion stated in \eqref{series_convergence_criterion_5} guarantees that \eqref{D4} holds.

\begin{corollary}\label{cor:main_1}
For the model described in \S\ref{subsec:with}, as long as the functions $f$ and $g$ satisfy the constraints stated in Theorem~\ref{thm:main_4}, and $k(n)=\Theta(n^{\alpha})$, where 
\begin{enumerate*}
\item $\alpha>\tau$ for $\tau\leqslant 1/2$ 
\item and $\alpha>1/2$ for $\tau>1/2$,
\end{enumerate*}
the conclusions of Theorem~\ref{thm:main_4} remain true.
\end{corollary}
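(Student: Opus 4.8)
The plan is to recognize that, since the structural hypotheses on $f$ and $g$ are assumed to hold (these being precisely the conditions of Theorem~\ref{thm:main_4}, guaranteed in particular by any of \eqref{strict_decreasing_1}, \eqref{strict_convex_1} and \eqref{contraction_1}), the corollary reduces to a purely analytic verification: under $k(n)=\Theta(n^{\alpha})$ with the stated restriction on $\alpha$, each of the summability and limit hypotheses of Theorem~\ref{thm:main_4} --- namely \eqref{series_convergence_criterion_5} when $\tau<1/2$, \eqref{series_convergence_criterion_6} when $\tau=1/2$, \eqref{series_convergence_criterion_7} when $\tau>1/2$, together with \eqref{D4} in every regime --- must be shown to hold, after which the three convergence statements follow verbatim from Theorem~\ref{thm:main_4}. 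I would fix constants $c_{1},c_{2}>0$ with $c_{1}i^{\alpha}\leqslant k(i)\leqslant c_{2}i^{\alpha}$ for all large $i$, so that $k(i)^{-1}=\Theta(i^{-\alpha})$, and then repeatedly invoke the elementary trichotomy that $\sum_{i=1}^{n}(i+1)^{s}$ stays bounded for $s<-1$, grows like $\ln n$ for $s=-1$, and grows like $n^{s+1}$ for $s>-1$. I would also record that $\tau=1-g'(x^{*})>0$ throughout, so that the requirement $\alpha>\tau$ forces $\alpha>0$, a fact used tacitly in every estimate below.

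I would first dispatch the single-sum conditions, which are the straightforward ones. The first line of \eqref{series_convergence_criterion_5} becomes a constant multiple of $\sum_{i}i^{\tau-1-\alpha}$, convergent exactly when $\alpha>\tau$; this in turn yields \eqref{D4}, since $(i+1)^{\tau-1}\geqslant (i+1)^{-1}$ for $\tau>0$, as already observed after the statement of Theorem~\ref{thm:main_4}. For $\tau=1/2$ the first line of \eqref{series_convergence_criterion_6} reduces to $(\ln n)^{-1/2}\sum_{i}i^{-1/2-\alpha}$, which tends to $0$ precisely when $\alpha>1/2=\tau$; and for $\tau>1/2$ the first line of \eqref{series_convergence_criterion_7} reduces to $n^{1/2-\tau}\sum_{i}i^{\tau-1-\alpha}$, which the trichotomy shows vanishes as soon as $\alpha>1/2$ (treating separately the subcases $\alpha\leqslant\tau$ and $\alpha>\tau$, where in the former $\sum_{i}i^{\tau-1-\alpha}$ grows like $n^{\tau-\alpha}$). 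In every regime \eqref{D4} also holds on its own, because $\sum_{n}(n+1)^{-1}k(n)^{-1}=\Theta(\sum_{n}n^{-1-\alpha})$ converges for each $\alpha>0$. This already recovers the thresholds $\alpha>\tau$ for $\tau\leqslant 1/2$ and $\alpha>1/2$ for $\tau>1/2$ asserted in the corollary.

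The main obstacle is the second, double-sum line in each of \eqref{series_convergence_criterion_5}, \eqref{series_convergence_criterion_6} and \eqref{series_convergence_criterion_7}, and this is where the argument requires genuine care. My approach would be to interchange the order of summation and estimate the inner summation against $(t+1)^{-1}k(t)^{-1}=\Theta(t^{-1-\alpha})$ via the same growth estimates, so that the double sum collapses to a single series of the same order as $\sum_{i}i^{\tau-1-\alpha}$, whose behaviour is once more governed by the threshold $\alpha>\tau$; the prefactors $(\ln n)^{-1/2}$ and $n^{1/2-\tau}$ are then absorbed exactly as in the previous paragraph to pin down $\alpha>1/2$ in the critical and supercritical regimes. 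The delicate points, which I would track carefully, are the interchange step itself, the asymptotic order of the resulting inner sum, and the bookkeeping at the endpoints $\alpha=\tau$ and $\alpha=1/2$; these are where any latent constraint beyond the single-sum analysis would surface, and so they deserve the bulk of the verification. Having confirmed every hypothesis of Theorem~\ref{thm:main_4}, I would conclude by invoking that theorem directly to obtain the almost sure convergence when $\tau<1/2$ and the two central limit statements when $\tau=1/2$ and $\tau>1/2$, with the limiting Gaussian variances exactly as given there.
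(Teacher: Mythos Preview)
Your proposal is correct and follows essentially the same approach as the paper: substitute $k(i)^{-1}=\Theta(i^{-\alpha})$ into each of the summability and limit conditions \eqref{D4}, \eqref{series_convergence_criterion_5}, \eqref{series_convergence_criterion_6} and \eqref{series_convergence_criterion_7}, reduce both the single and double sums to series of the form $\sum_i i^{\tau-1-\alpha}$, and read off the thresholds $\alpha>\tau$ (for $\tau\leqslant 1/2$) and $\alpha>1/2$ (for $\tau>1/2$). The paper's proof is in fact a bit terser than yours---it does not separate the subcases $\alpha\leqslant\tau$ versus $\alpha>\tau$ when $\tau>1/2$, nor does it phrase the double-sum reduction as an interchange of summation---but the underlying estimates and the conclusion are identical.
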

Here, given any two sequences $\{a(n)\}$ and $\{b(n)\}$, we say that $a(n)=\Theta(b(n))$ if there exist constants $0<C_1<C_2$ such that $C_1 |b(n)| < |a(n)| < C_2 |b(n)|$ for all $n$ sufficiently large.

\subsection{Where the sampling scheme is without replacement}\label{subsec:without_results} Here, we consider the model described in \S\ref{subsec:without}, both when the sample size, $k(n)$, stays constant with $n$, and when $k(n)$ grows with $n$. The first two results, Theorem~\ref{thm:main_5} and Theorem~\ref{thm:main_6}, are concerned with almost sure convergence, while the latter two, Theorem~\ref{thm:main_7} and Theorem~\ref{thm:main_8}, are concerned with convergence in distribution.
\begin{theorem}\label{thm:main_5}
In the model described in \S\ref{subsec:without}, with $p\in(0,1)\setminus\{1/2\}$ and $k$ remaining constant as $n$ grows, if the function $H$, defined in \eqref{H_defn}, has a unique fixed point, say $x^{*}$, in $[-1,1]$, the sequence $\{S_{n}/n\}$ converges almost surely to $x^{*}$. Moreover, if one of \eqref{strict_decreasing}, \eqref{strict_convex} and \eqref{contraction} holds, the same conclusion as above is true.
\end{theorem}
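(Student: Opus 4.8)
\emph{Proof proposal.} The plan is to show that the without-replacement model admits the same stochastic approximation representation as the with-replacement model treated in Theorem~\ref{thm:main_1}, up to an additive perturbation that becomes summable once weighted by the step sizes; convergence then follows from the very argument that proves Theorem~\ref{thm:main_1}. Writing $M_{n}=S_{n}/n$ and recalling $S_{n+1}=S_{n}+X_{n+1}$, the defining recursion reads
\begin{equation*}
M_{n+1}=M_{n}+\frac{1}{n+1}\left(X_{n+1}-M_{n}\right).
\end{equation*}
I would decompose $X_{n+1}-M_{n}=h(M_{n})+\xi_{n+1}+\epsilon_{n}$, where $h(x)=H(x)-x$ is the drift, $\xi_{n+1}=X_{n+1}-\E[X_{n+1}\mid\mathcal{F}_{n}]$ is a bounded martingale difference (since $|X_{n+1}|\leqslant 1$), and $\epsilon_{n}=\E[X_{n+1}\mid\mathcal{F}_{n}]-H(M_{n})$ is the bias introduced by sampling without replacement. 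The crux is to show that $\epsilon_{n}=O(1/n)$ uniformly.

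First I would compute the conditional expectation explicitly. Given $\mathcal{F}_{n}$, the number of $(+1)$-valued steps among $X_{1},\ldots,X_{n}$ is $N_{n}^{+}=n(1+M_{n})/2$, and because the sample of size $k$ is drawn without replacement, $C_{n}^{+}$ is hypergeometric with parameters $(n,N_{n}^{+},k)$. Conditioning on $C_{n}^{+}=j$ and using \eqref{X_{n+1}_distribution} together with \eqref{g_defn} gives $\E[X_{n+1}\mid\mathcal{F}_{n},C_{n}^{+}=j]=2g(j/k)-1$, so that
\begin{equation*}
\E\left[X_{n+1}\mid\mathcal{F}_{n}\right]=\sum_{j=0}^{k}\left(2g(j/k)-1\right)\frac{\binom{N_{n}^{+}}{j}\binom{n-N_{n}^{+}}{k-j}}{\binom{n}{k}}.
\end{equation*}

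The key estimate is that, for each fixed $j\in\{0,1,\ldots,k\}$, the hypergeometric weight differs from its binomial counterpart by an $O(1/n)$ term uniformly in $N_{n}^{+}$. Writing $\hat{p}_{n}=N_{n}^{+}/n=(1+M_{n})/2$ and expanding the falling factorials appearing in the ratio $\binom{N_{n}^{+}}{j}\binom{n-N_{n}^{+}}{k-j}/\binom{n}{k}$ as quotients of polynomials of fixed degree $k$, one obtains
\begin{equation*}
\frac{\binom{N_{n}^{+}}{j}\binom{n-N_{n}^{+}}{k-j}}{\binom{n}{k}}=\binom{k}{j}\hat{p}_{n}^{\,j}(1-\hat{p}_{n})^{k-j}+O(1/n),
\end{equation*}
with an implied constant depending only on $k$. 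Substituting $\hat{p}_{n}=(1+M_{n})/2$ and comparing with \eqref{H_defn} yields $\E[X_{n+1}\mid\mathcal{F}_{n}]=H(M_{n})+\epsilon_{n}$ with $|\epsilon_{n}|\leqslant C/n$ for a constant $C=C(k,g)$, whence $\sum_{n}(n+1)^{-1}|\epsilon_{n}|<\infty$. The recursion is therefore a Robbins--Monro scheme with step sizes $\gamma_{n}=(n+1)^{-1}$ obeying $\sum_{n}\gamma_{n}=\infty$, $\sum_{n}\gamma_{n}^{2}<\infty$, bounded martingale-difference noise, drift $h$ identical to that of the with-replacement model, and a perturbation summable in the sense $\sum_{n}\gamma_{n}|\epsilon_{n}|<\infty$. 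Since \eqref{p>1/2}/\eqref{p<1/2} together with the uniqueness of the fixed point $x^{*}$ of $H$ supply the stability conditions exactly as in Theorem~\ref{thm:main_1}, and since the relevant stochastic approximation theorem tolerates such summable additive perturbations without altering the limit set, I conclude $S_{n}/n\to x^{*}$ almost surely. The ``moreover'' assertion follows because, as already established for Theorem~\ref{thm:main_1}, each of \eqref{strict_decreasing}, \eqref{strict_convex} and \eqref{contraction} forces $H$ to possess a unique fixed point in $(-1,1)$, reducing that assertion to the first part.

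The step I expect to be the main obstacle is rendering the hypergeometric-to-binomial comparison genuinely uniform in $M_{n}$ all the way to the boundary values $M_{n}=\pm 1$, where $N_{n}^{+}$ is within $O(k)$ of $0$ or of $n$ and several binomial coefficients vanish, so that the naive polynomial expansion degenerates; the bound nonetheless survives there (indeed the two weights coincide at $N_{n}^{+}\in\{0,n\}$), but this requires a separate near-boundary check. A secondary point requiring care is confirming that the particular convergence theorem invoked for Theorem~\ref{thm:main_1} is stated (or can be applied) with an additive drift perturbation $\epsilon_{n}$ satisfying $\sum_{n}\gamma_{n}|\epsilon_{n}|<\infty$.
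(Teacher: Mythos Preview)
Your proposal is correct and mirrors the paper's own proof: the paper uses the identical decomposition into a bounded martingale-difference term $\epsilon_{n+1}=X_{n+1}-F_{n}(S_{n}/n)$ and a bias $\delta_{n}=F_{n}(S_{n}/n)-H(S_{n}/n)$, establishes $|\delta_{n}|=O(1/n)$ uniformly via an explicit hypergeometric-to-binomial comparison (the factor $y(1-y)$ arising in \eqref{without_replacement_function_approximation_error} absorbs the boundary degeneracy you flag), and then applies the same Robbins--Monro convergence result (Proposition~1.2.3 of \cite{duflo2013random}) to the combined error $\epsilon_{n+1}+\delta_{n}$. Both of your flagged concerns are resolved in the paper along the lines you anticipate.
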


\begin{theorem}\label{thm:main_6}
Consider the model described in \S\ref{subsec:without}, with $p\in(0,1)\setminus\{1/2\}$ and $k=k(n) \uparrow \infty$ as $n\rightarrow \infty$. We assume either that \eqref{D4} holds, or that one of \eqref{D1}, \eqref{D2}, \eqref{D3} holds and 
\begin{equation}\label{series_convergence_criterion_8}
\text{the series } \sum_{n}\exp\left\{\frac{k(n)^{2}}{n}\right\}\frac{k(n)^{2}}{n^{2}} \text{ converges}.
\end{equation}
We also assume that the function $g$ has a unique fixed point, say $x^{*}$, in $[0,1]$ (which, we note, is ensured if one of \eqref{strict_decreasing_1}, \eqref{strict_convex_1} and \eqref{contraction_1} holds). Then the sequence $\{S_{n}/n\}$ converges almost surely to $(2x^{*}-1)$.
\end{theorem}
Note that $k(n)=\Theta\left(n^{\alpha}\right)$ for any $0<\alpha<1/2$ allows \eqref{series_convergence_criterion_8} to be satisfied.

\begin{theorem}\label{thm:main_7}
In the model described in \S\ref{subsec:without}, with $k$ remaining constant as $n$ grows, suppose the function $H$, defined in \eqref{H_defn}, has a unique fixed point, say $x^{*}$, in $[-1,1]$. Letting $\tau=1-H'(x^{*})>0$, the same conclusion as that drawn in Theorem~\ref{thm:main_2} holds. Even more specifically, the conclusion remains true if one of \eqref{strict_decreasing}, \eqref{strict_convex} and \eqref{contraction} holds.
\end{theorem}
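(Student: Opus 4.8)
The plan is to realize the without-replacement process as a stochastic approximation recursion driven by the \emph{same} mean-field map $H$ as the with-replacement process treated in Theorem~\ref{thm:main_2}, and then to show that the difference between the two sampling schemes contributes only an asymptotically negligible perturbation. Writing $\theta_n=S_n/n$ and $\gamma_n=(n+1)^{-1}$, the identity $S_{n+1}=S_n+X_{n+1}$ rearranges into
\begin{equation*}
\theta_{n+1}=\theta_n+\gamma_n\left(X_{n+1}-\theta_n\right)=\theta_n+\gamma_n\left(H(\theta_n)-\theta_n+\Delta M_{n+1}+r_n\right),
\end{equation*}
where $\Delta M_{n+1}=X_{n+1}-\E[X_{n+1}\mid\mathcal{F}_n]$ is a bounded martingale difference, $H$ is the map in \eqref{H_defn}, and $r_n=\E[X_{n+1}\mid\mathcal{F}_n]-H(\theta_n)$ records the gap between the schemes. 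The crucial observation is that, conditioned on $\mathcal{F}_n$, the sampled proportion $C_n^+/k$ has the \emph{same} conditional mean $P_n^+=(1+\theta_n)/2$ under both schemes, because the hypergeometric and binomial laws agree on their first moment; only their higher moments differ.

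The main step is to bound $r_n$. Under sampling without replacement $C_n^+$ is hypergeometric with population size $n$, number of successes equal to the count of $(+1)$-steps among the first $n$, and sample size $k$, whereas under sampling with replacement it is $\bin(k,P_n^+)$, for which \eqref{H_defn} and \eqref{g_defn} give $\E[X_{n+1}\mid\mathcal{F}_n]=H(\theta_n)$ exactly. Hence
\begin{equation*}
r_n=\sum_{i=0}^{k}\left(2g(i/k)-1\right)\left[\Prob_{\text{hyp}}\!\left(C_n^+=i\mid\mathcal{F}_n\right)-\binom{k}{i}\left(P_n^+\right)^{i}\left(1-P_n^+\right)^{k-i}\right].
\end{equation*}
Expanding the hypergeometric pmf via falling factorials, and using that $k$ is fixed while $\theta_n\to x^{*}\in(-1,1)$ (so that $P_n^+$ is eventually bounded away from $0$ and $1$), each bracketed difference is $O(1/n)$ uniformly in $i$, whence $|r_n|=O(1/n)$. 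The almost sure convergence $\theta_n\to x^{*}$ that localizes $P_n^+$ is supplied by Theorem~\ref{thm:main_5}.

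With this bound the argument reduces to the one already carried out for Theorem~\ref{thm:main_2}. Since $|r_n|=O(1/n)=o(\gamma_n^{1/2})$, the perturbation is summable against $\gamma_n$ and leaves the limiting law unchanged: linearizing $H(\theta)-\theta$ about $x^{*}$ with slope $-\tau=H'(x^{*})-1$, the accumulated bias produced by $\{r_n\}$ is of order $n^{-\tau}$, which is $o(n^{-1/2})$ when $\tau>1/2$, of order $(\ln n)^{-1/2}$ after the relevant rescaling when $\tau=1/2$, and convergent after multiplication by $n^{\tau}$ when $\tau<1/2$; in every regime it is dominated by the martingale fluctuation. Moreover the conditional variance $\E[\Delta M_{n+1}^2\mid\mathcal{F}_n]=1-\left(\E[X_{n+1}\mid\mathcal{F}_n]\right)^2$ tends to $1-(x^{*})^2$ under both schemes, so the limiting variance coincides with that in Theorem~\ref{thm:main_2}, and invoking the same stochastic-approximation central limit theorem reproduces the three regimes verbatim. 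The main obstacle is exactly the uniform $O(1/n)$ control of $r_n$: one must check that the rate of convergence of the hypergeometric law to the binomial law is uniform over the pertinent range of $P_n^+$, which is precisely where the localization $x^{*}\in(-1,1)$ furnished by Theorem~\ref{thm:main_5} enters.
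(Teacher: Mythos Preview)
Your strategy is the same one the paper uses: write the without-replacement process as the with-replacement stochastic approximation driven by $H$, plus a martingale difference, plus the correction $r_n=F_n(\theta_n)-H(\theta_n)$, and show this correction is asymptotically negligible in each of the three regimes. Two clarifications sharpen the sketch.

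First, the $O(1/n)$ bound on $r_n$ does not require localization. The paper establishes in \eqref{without_replacement_function_approximation_error} that $|F_n(x)-H(x)|\leqslant Cn^{-1}$ \emph{uniformly} for $x\in[-1,1]$ (the bound reduces to $4y(1-y)k(k-1)n^{-1}\{\prod_{j=0}^{k-1}(1-j/n)\}^{-1}$, and $y(1-y)\leqslant 1/4$). So you do not need $\theta_n\to x^{*}\in(-1,1)$ to get $r_n=O(1/n)$; the almost sure convergence from Theorem~\ref{thm:main_5} is used only, as you note at the end, to identify the limiting variance $1-F_n^{2}(\theta_n)\to 1-(x^{*})^{2}$.

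Second, the phrase ``invoking the same stochastic-approximation central limit theorem'' is slightly misleading: the Duflo result (Theorem~\ref{thm:gen_stoch_approx_dist}) requires $\E[B_{n+1}\mid\mathcal{G}_n]=F(A_n)$ for an $n$-independent $F$, which fails here since $\E[X_{n+1}\mid\mathcal{F}_n]=F_n(\theta_n)$. The paper therefore does not apply it as a black box but instead reruns the proof of Theorem~\ref{thm:main_4}: it iterates the linearized recursion to get $\theta_{n+1}-x^{*}=\beta_n(\theta_k-x^{*})+\beta_n M_{n+1}+\beta_n R_n$, treats $M_{n+1}$ via the martingale CLT directly, and shows $\beta_n R_n$ has the right order by splitting $\delta_i=f_i(\theta_i)+\tau(\theta_i-x^{*})$ into the piece $|F_i-H|=O(1/i)$ and the quadratic piece $|h(\theta_i)+\tau(\theta_i-x^{*})|\leqslant M(\theta_i-x^{*})^2$. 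Your regime-by-regime check of the accumulated bias is exactly this computation; just be aware that the precise order is $O(n^{-\min(\tau,1)})$ rather than $O(n^{-\tau})$ when $\tau\geqslant 1$, which still suffices.
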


\begin{theorem}\label{thm:main_8}
Consider the model described in \S\ref{subsec:without}, with $p\in(0,1)\setminus\{1/2\}$ and the sample size $k(n)\uparrow\infty$ as $n\rightarrow\infty$. Assume that the function $f$, introduced via \eqref{X_{n+1}_distribution}, is in $\mathcal{C}^{2}[0,1]$, and the function $g$, defined in \eqref{g_defn}, has a unique fixed point $x^{*}\in[0,1]$. Letting $\tau=1-g'(x^{*})>0$, we assume that \eqref{series_convergence_criterion_5} holds when $\tau<1/2$, \eqref{series_convergence_criterion_6} holds when $\tau=1/2$, and \eqref{series_convergence_criterion_7} holds when $\tau>1/2$. In addition, we assume that \eqref{D4} holds for all values of $\tau$. Then the same conclusion as Theorem~\ref{thm:main_4} is true.
\end{theorem}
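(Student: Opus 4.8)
The plan is to recast $\{S_n/n\}$ as a Robbins--Monro stochastic approximation recursion and then invoke the same central-limit machinery for stochastic approximation that underlies Theorem~\ref{thm:main_4}; indeed, the entire argument runs parallel to the proof of Theorem~\ref{thm:main_4}, the only substantive change being that the sample count $C_n^{+}$ is now governed by a hypergeometric law (sampling without replacement) rather than a binomial one. Writing $V_n=S_n/n$ and $Z_n=(V_n+1)/2$ for the current proportion of $(+1)$-steps, the elementary identity $V_{n+1}-V_n=(n+1)^{-1}(X_{n+1}-V_n)$ exhibits the step size $\gamma_n=(n+1)^{-1}$. Using $\E[X_{n+1}\mid\mathcal{F}_n]=2\,\E[g(C_n^{+}/k)\mid\mathcal{F}_n]-1$ from \eqref{X_{n+1}_distribution}, I would decompose the increment as
\[
X_{n+1}-V_n=\underbrace{2\bigl(g(Z_n)-Z_n\bigr)}_{\text{mean field }h(V_n)}+\underbrace{2\bigl(\E[g(C_n^{+}/k)\mid\mathcal{F}_n]-g(Z_n)\bigr)}_{\text{bias }b_n}+\underbrace{\bigl(X_{n+1}-\E[X_{n+1}\mid\mathcal{F}_n]\bigr)}_{\text{martingale noise }\xi_{n+1}}.
\]
Regarding $h$ as a function of $V$ through $Z=(V+1)/2$, one checks $h(2x^{*}-1)=0$ and $h'(2x^{*}-1)=g'(x^{*})-1=-\tau$, while the sign hypothesis $\{g(x)-x\}(x-x^{*})<0$ makes $V^{*}=2x^{*}-1$ the unique attracting zero of the mean field; these are precisely the ingredients the stochastic-approximation central limit theorem demands.

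The martingale part is treated exactly as in Theorem~\ref{thm:main_4}: since $\xi_{n+1}$ is a bounded martingale difference with conditional variance $1-(\E[X_{n+1}\mid\mathcal{F}_n])^{2}\to 1-(V^{*})^{2}=4x^{*}(1-x^{*})$, its Lindeberg condition is immediate and the asymptotic noise variance is $\sigma^{2}=4x^{*}(1-x^{*})$, reproducing the variances in all three regimes. The crux, and the only place where ``without replacement'' enters, is the control of the bias $b_n$. Conditioned on $\mathcal{F}_n$, the variable $C_n^{+}$ is hypergeometric with $\E[C_n^{+}/k\mid\mathcal{F}_n]=Z_n$ and
\[
\mathrm{Var}\bigl(C_n^{+}/k\mid\mathcal{F}_n\bigr)=\frac{Z_n(1-Z_n)}{k}\cdot\frac{n-k}{n-1}\leqslant\frac{1}{4k(n)}.
\]
Since $g=(1-p)+(2p-1)f$ inherits the $\mathcal{C}^{2}$ regularity of $f$, a second-order Taylor expansion of $g$ about $Z_n$, combined with the vanishing of the first centered hypergeometric moment, yields $|b_n|\leqslant\tfrac{1}{2}\|g''\|_{\infty}\,\mathrm{Var}(C_n^{+}/k\mid\mathcal{F}_n)=O(k(n)^{-1})$ --- the same order as in the binomial (with-replacement) setting, and in fact damped by the finite-population factor $(n-k)/(n-1)\leqslant 1$.

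With the bias bounded by $O(k(n)^{-1})$, I would feed these estimates into the stochastic-approximation central limit theorem exactly as in the proof of Theorem~\ref{thm:main_4}. The hypotheses \eqref{series_convergence_criterion_5}, \eqref{series_convergence_criterion_6} and \eqref{series_convergence_criterion_7} are precisely the summability/negligibility requirements ensuring that the accumulated, appropriately rescaled bias is negligible in the regimes $\tau<1/2$, $\tau=1/2$ and $\tau>1/2$ respectively, while assumption \eqref{D4} secures the almost sure convergence $V_n\to V^{*}$ needed to localize the Taylor expansion near $x^{*}$. The three conclusions --- almost sure convergence of $n^{\tau}\{S_n/n-(2x^{*}-1)\}$ when $\tau<1/2$, and the Gaussian limits with the stated variances when $\tau\geqslant 1/2$ --- then follow verbatim from Theorem~\ref{thm:main_4}.

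\textbf{Main obstacle.} The only genuine technical point is the bias estimate: one must verify that the hypergeometric second moment obeys the same $O(k^{-1})$ bound as the binomial one \emph{uniformly} over the admissible range $k(n)\leqslant n$, so that no summability condition beyond those of Theorem~\ref{thm:main_4} is needed. The finite-population correction factor $(n-k)/(n-1)$ makes this bound, if anything, easier than in the with-replacement case, which is why Theorem~\ref{thm:main_8} requires no analogue of the extra condition \eqref{series_convergence_criterion_8} that appears in the weaker-smoothness hypotheses of Theorem~\ref{thm:main_6}.
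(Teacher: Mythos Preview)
Your proposal is correct and follows essentially the same route as the paper: both recast $S_n/n$ as a Robbins--Monro recursion with step size $(n+1)^{-1}$, split the increment into mean field $\hat h$, martingale noise, and an $O(k(n)^{-1})$ bias, and then rerun the machinery of the proof of Theorem~\ref{thm:main_4} (the $\beta_n M_{n+1}+\beta_n R_n$ decomposition, the increasing-process analysis, Lyapunov/Lindeberg for $\tau\geqslant 1/2$, and the $u_n$-recursion for the remainder). The only noteworthy difference is how the $O(k(n)^{-1})$ bias is obtained: the paper introduces the hypergeometric linear operator $L_n(f;y)=\sum_i f(i/k(n))\binom{ny}{i}\binom{n(1-y)}{k(n)-i}\binom{n}{k(n)}^{-1}$, computes its centred second moment $L_n((e_1-y)^2;y)=y(1-y)(n-k(n))/(k(n)(n-1))$ explicitly, and then invokes a Voronovskaja-type bound (Gonska--Tachev) to get $|F_n(x)-\hat g((1+x)/2)|=O(k(n)^{-1})$. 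Your direct second-order Taylor expansion of $g$ about $Z_n$, combined with $\E[C_n^{+}/k\mid\mathcal F_n]=Z_n$ and the hypergeometric variance $Z_n(1-Z_n)(n-k)/(k(n-1))\leqslant 1/(4k(n))$, is a more elementary and self-contained way to reach exactly the same bound; the paper's operator-theoretic detour buys generality (it would also handle the growing-$k(n)$ error in \eqref{without_replacement_function_approximation_error} without reproving moment identities) but is not strictly needed here since $f\in\mathcal C^2[0,1]$ is assumed.
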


\section{Proofs of the results from \S\ref{sec:main_results}}\label{sec:all_proofs}
\subsection{Proofs of the results from \S\ref{subsec:results_with}}\label{subsec:proofs_with} We focus, in \S\ref{sec:all_proofs}, on the model described in \S\ref{subsec:with} (i.e.\ where the sampling scheme employed is one with replacement). 

\begin{proof}[Proof of Theorem~\ref{thm:main_1}]
Recalling, from \S\ref{subsec:with}, the definitions of the i.i.d.\ random variables, $U_{n,i}$ for all $i \in [k]$, we have
\begin{equation}
\Prob\left[U_{n,1}=i_{1},\ldots,U_{n,k}=i_{k}\right]=n^{-k} \quad \text{for all } i_{1},i_{2},\ldots,i_{k}\in[n].\nonumber
\end{equation}
As described in \S\ref{sec:intro}, a parallel may be drawn between our model and a generalized urn scheme as follows: we consider an urn that is empty to begin with, and that, at time-stamp $n$, contains a single ball which is of colour $(+1)$ with probability $q$ and of colour $(-1)$ with probability $(1-q)$. For $n \geqslant 2$, a sample of size $k$ is drawn, with replacement, from the existing balls in the urn, and we let $C_{n}^{+}$ (as defined in \eqref{C_{n}^{+}}) denote the number of $(+1)$-coloured balls appearing in this sample. The sampled balls are then returned to the urn, and a new ball, whose colour is random, and follows the same distribution as described in \eqref{X_{n+1}_distribution}, is added to the urn. Evidently, the urn contains a total of $n$ balls at time-stamp $n$, and evidently, $S_{n}$ equals the difference between the number of $(+1)$-coloured balls and the number of $(-1)$-coloured balls in the urn at time-stamp $n$, where $S_{n}$ is as defined in \eqref{S_{n}}. The number of $(+1)$-coloured balls in the urn, at time-stamp $n$, is then given by $(S_{n}+n)/2$, and the number of $(-1)$-coloured balls is given by $(n-S_{n})/2$. Thus, we may write:
\begin{equation}
\Prob\left[C_{n}^{+}=i\big|\mathcal{F}_{n}\right]=\frac{1}{n^{k}}{k \choose i}\left(\frac{S_{n}+n}{2}\right)^{i}\left(\frac{n-S_{n}}{2}\right)^{k-i}, \text{ for each } i \in [k]\cup\{0\},\label{C_{n}^{+}_cond_dist}
\end{equation}
where $\mathcal{F}_{n}$ is (as defined earlier) the $\sigma$-field comprising all information pertaining to the generalized urn process up to and including time-stamp $n$. From \eqref{C_{n}^{+}_cond_dist} and \eqref{X_{n+1}_distribution}, we conclude that 
\begin{align}
\Prob\left[S_{n+1}=S_{n}+1\big|\mathcal{F}_{n}\right]={}&\Prob\left[X_{n+1}=1\big|\mathcal{F}_{n}\right]=\sum_{i=0}^{k}\Prob\left[X_{n+1}=1\big|C_{n}^{+}=i,\mathcal{F}_{n}\right]\Prob\left[C_{n}^{+}=i\big|\mathcal{F}_{n}\right]\nonumber\\
={}&\frac{1}{2^{k}}\sum_{i=0}^{k}g\left(\frac{i}{k}\right){k \choose i}\left(1+\frac{S_{n}}{n}\right)^{i}\left(1-\frac{S_{n}}{n}\right)^{k-i},\label{cond_probab_constant_sample_size_with_replacement}
\end{align}
where the function $g$ is as defined in \eqref{g_defn}. From \eqref{cond_probab_constant_sample_size_with_replacement}, the fact that $X_{n+1}\in \{\pm1\}$, and \eqref{H_defn}, we have
\begin{equation}\label{cond_exp_constant_sample_size_with_replacement}
\E\left[X_{n+1}\big|\mathcal{F}_{n}\right]=2^{1-k}\sum_{i=0}^{k}g\left(\frac{i}{k}\right){k \choose i}\left(1+\frac{S_{n}}{n}\right)^{i}\left(1-\frac{S_{n}}{n}\right)^{k-i}-1=H\left(\frac{S_{n}}{n}\right).
\end{equation}

At this point, we come to what may be considered the principal technique implemented in the proofs of all of the results stated in \S\ref{sec:main_results}: that of representing our stochastic process as a stochastic approximation process. The specific representation to be considered depends on the set-up (i.e.\ whether the sampling scheme is with or without replacement, whether the sample size remains constant or grows with $n$, and whether we are interested in strong or weak convergence). For the proof of Theorem~\ref{thm:main_1}, we write, for $n \geqslant 1$,
\begin{align}
\frac{S_{n+1}}{n+1}={}&\frac{S_{n}+X_{n+1}}{n+1}=\frac{S_{n}}{n}+\frac{1}{n+1}\left\{X_{n+1}-\frac{S_{n}}{n}\right\}\nonumber\\
={}&\frac{S_{n}}{n}+\frac{1}{n+1}\left\{H\left(\frac{S_{n}}{n}\right)-\frac{S_{n}}{n}+X_{n+1}-H\left(\frac{S_{n}}{n}\right)\right\}=\frac{S_{n}}{n}+\gamma_{n}\left\{h\left(\frac{S_{n}}{n}\right)+\epsilon_{n+1}\right\},\label{sa_1}
\end{align}
where $H$ is as defined in \eqref{H_defn}, and we set
\begin{equation}
h(x)=H(x)-x \text{ for each } x \in [-1,1], \quad \gamma_{n}=\frac{1}{n+1} \quad \text{and} \quad \epsilon_{n+1}=X_{n+1}-H\left(\frac{S_{n}}{n}\right).\label{h_error_defns}
\end{equation}
In the literature on stochastic approximations (see, for instance, \cite{duflo2013random}, \cite{benaim2006dynamics} and \cite{borkar2008stochastic}), the function $h$ is referred to as the \emph{drift function}, the sequence $\{\gamma_{n}\}$ is referred to as the sequence of \emph{step sizes}, and the sequence $\{\epsilon_{n+1}\}$ is referred to as the sequence of \emph{errors}. The representation of the sequence $\left\{S_{n}/n\right\}$ in the form given by \eqref{sa_1} is referred to as a stochastic approximation process. From \eqref{cond_exp_constant_sample_size_with_replacement} and the definition of $\epsilon_{n+1}$ in \eqref{h_error_defns}, we have $\E\left[\epsilon_{n+1}\big|\mathcal{F}_{n}\right]=0$ for each $n \in \mathbb{N}$, which implies that $\{\epsilon_{n+1}\}$ forms a martingale difference sequence.

We now come to a well-known result from the literature on stochastic approximation processes, as this is instrumental in proving all our results pertaining to the almost sure convergence of the sequence $\{S_{n}/n\}$:
\begin{theorem}[Proposition 1.2.3 of \cite{duflo2013random}]\label{thm:gen_stoch_approx_a.s.}
Let $F$ be a continuous real-valued function such that, for a prespecified $\alpha \in \mathbb{R}$, there exists $x^{*} \in \mathbb{R}$ such that $F(x^{*})=\alpha$, and the following criteria are satisfied:
\begin{align}
{}&(F(x)-\alpha)(x-x^{*}) < 0 \text{ for all } x \neq x^{*},\label{eq:attractor_cond}\\
{}&|F(x)| \leqslant K(1+|x|) \text{ for all } x \in \mathbb{R}, \text{ for some constant } K > 0.\label{eq:uniform_bound_cond}
\end{align}
Let $\{\gamma_{n}\}$ be a sequence of positive reals decreasing towards $0$ such that $\sum_{n}\gamma_{n}$ diverges. Let $\{\epsilon_{n}\}$ be another sequence of reals such that $\sum_{n}\gamma_{n}\epsilon_{n+1}$ converges. Then the sequence $\{x_{n}\}$, defined iteratively by
\begin{equation}
    x_{n+1}=x_{n}+\gamma_{n}\{F(x_{n})-\alpha+\epsilon_{n+1}\} \text{ for each } n \in \mathbb{N},\label{general_stoch_approx_form}
\end{equation}
converges to $x^{*}$, for all initial values $x_{0}$.
\end{theorem}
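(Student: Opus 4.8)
The plan is to prove this as a purely deterministic (pathwise) statement, using the convergence of $\sum_{n}\gamma_{n}\epsilon_{n+1}$ as the only property of the errors. First I would reduce to the case $x^{*}=0$ and $\alpha=0$: replacing $F$ by $h(x)=F(x+x^{*})-\alpha$ and $x_{n}$ by $y_{n}=x_{n}-x^{*}$, the recursion \eqref{general_stoch_approx_form} becomes $y_{n+1}=y_{n}+\gamma_{n}\{h(y_{n})+\epsilon_{n+1}\}$, where $h$ is continuous, $h(0)=0$, the attractor condition \eqref{eq:attractor_cond} now reads $xh(x)<0$ for $x\neq 0$ (so $h(x)<0$ for $x>0$ and $h(x)>0$ for $x<0$), and the growth bound \eqref{eq:uniform_bound_cond} is inherited in the form $|h(x)|\leqslant K'(1+|x|)$ for some $K'>0$.

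The next step is to absorb the noise into a vanishing sequence. Since $\sum_{n}\gamma_{n}\epsilon_{n+1}$ converges, the tail sums $\eta_{n}=\sum_{m\geqslant n}\gamma_{m}\epsilon_{m+1}$ are well defined, satisfy $\eta_{n}\to 0$, and obey $\gamma_{n}\epsilon_{n+1}=\eta_{n}-\eta_{n+1}$. Setting $u_{n}=y_{n}+\eta_{n}$, a one-line computation turns the recursion into the \emph{noise-free} form $u_{n+1}=u_{n}+\gamma_{n}h(y_{n})=u_{n}+\gamma_{n}h(u_{n}-\eta_{n})$. Because $\eta_{n}\to 0$, the sequences $\{u_{n}\}$ and $\{y_{n}\}$ share the same limiting behaviour, so it suffices to prove $u_{n}\to 0$.

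I would then establish \textbf{boundedness} of $\{u_{n}\}$, which I expect to be the main obstacle, since the linear growth \eqref{eq:uniform_bound_cond} a priori permits large increments $\gamma_{n}h(y_{n})$. The key point is that $\gamma_{n}\to 0$ tames these increments for large $n$: choosing $N$ so that $\gamma_{n}K'\leqslant 1/2$ and $|\eta_{n}|\leqslant 1$ for $n\geqslant N$, one checks that whenever $u_{n}>0$ the inward drift $h(u_{n}-\eta_{n})$ is negative (forcing $u_{n+1}<u_{n}$), while the growth bound prevents any overshoot to negative values, concretely $u_{n+1}\geqslant u_{n}/2-1$. A sequence that stayed above a fixed large level would therefore be decreasing and bounded below, hence converge to a positive limit at which $h$ is strictly negative, and then $\sum_{n}\gamma_{n}=\infty$ would force $u_{n}\to-\infty$, a contradiction; the symmetric argument rules out escape to $-\infty$. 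Since only the finitely many iterates before time $N$ remain, this yields $\sup_{n}|u_{n}|=:B<\infty$.

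Finally I would prove $u_{n}\to 0$ by a trapping argument, using boundedness to make the increments $\gamma_{n}|h(y_{n})|$ vanish. Fix $\epsilon>0$. An \emph{entry} step shows $u_{n}\in[-\epsilon,\epsilon]$ infinitely often: otherwise the vanishing increments would, for large $n$, confine $u_{n}$ to one side of $[-\epsilon,\epsilon]$ (it cannot leap across a gap of width $2\epsilon$ with steps smaller than $\epsilon$), where $y_{n}$ lies in a compact annulus on which continuity gives $h$ a fixed sign and $|h|\geqslant c>0$, so that $\sum_{n}\gamma_{n}=\infty$ again pushes $u_{n}$ to $\pm\infty$, a contradiction. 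A \emph{trapping} step then bounds the overshoot: once $u_{n}\leqslant\epsilon$, any step crossing above $\epsilon$ exceeds it by at most $\sup_{m\geqslant N}\gamma_{m}|h(y_{m})|\to 0$, and from just above $\epsilon$ the drift is inward, so $\limsup_{n}u_{n}\leqslant\epsilon$; symmetrically $\liminf_{n}u_{n}\geqslant-\epsilon$. Letting $\epsilon\downarrow 0$ gives $u_{n}\to 0$, whence $y_{n}=u_{n}-\eta_{n}\to 0$ and $x_{n}\to x^{*}$, for every initial value $x_{0}$.
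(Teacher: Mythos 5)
This statement is quoted in the paper as Proposition~1.2.3 of Duflo's \emph{Random Iterative Models} and is \emph{not} proved there -- the paper uses it as a black box, with the probabilistic content (almost sure convergence of $\sum_{n}\gamma_{n}\epsilon_{n+1}$) supplied separately by martingale arguments and the lemma then applied pathwise. Your proposal therefore cannot match ``the paper's proof''; what you have done is supply a correct, self-contained proof of the deterministic lemma itself, and your architecture is the standard one for this classical result: absorb the noise via the tail sums $\eta_{n}=\sum_{m\geqslant n}\gamma_{m}\epsilon_{m+1}$ (legitimate precisely because the series is assumed convergent, which also gives $\eta_{n}\to 0$), reduce to the noise-free recursion $u_{n+1}=u_{n}+\gamma_{n}h(u_{n}-\eta_{n})$, prove boundedness, then combine an ``enters $[-\epsilon,\epsilon]$ infinitely often'' step (using $\sum_{n}\gamma_{n}=\infty$ and that $h$ has a strictly negative sign, bounded away from $0$, on compact sets avoiding the root) with a trapping step whose overshoot vanishes because $\gamma_{n}\to 0$ and $h$ is bounded on the bounded range of the iterates. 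This is exactly the right use of each hypothesis: \eqref{eq:attractor_cond} for the inward drift, \eqref{eq:uniform_bound_cond} for overshoot control and boundedness, divergence of $\sum_{n}\gamma_{n}$ to force escape from any region where $h$ has a fixed sign, and convergence of $\sum_{n}\gamma_{n}\epsilon_{n+1}$ purely through the tail sums -- so the lemma is genuinely deterministic, as you say.

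Two points in your write-up are stated loosely and should be tightened, though both are repairable within your own framework. First, ``whenever $u_{n}>0$ the inward drift $h(u_{n}-\eta_{n})$ is negative'' is false as written: the sign of $h$ is evaluated at $u_{n}-\eta_{n}$, so you need $u_{n}>|\eta_{n}|$ (with your normalization, $u_{n}>1$ for $n\geqslant N$); the same caveat applies in the trapping step, where ``from just above $\epsilon$ the drift is inward'' requires $|\eta_{n}|<\epsilon$, which holds for $n$ large since $\eta_{n}\to 0$. Second, your boundedness paragraph, read literally, only rules out the trajectory \emph{staying} above a fixed large level forever; that alone does not exclude unbounded oscillation, since ever-higher peaks could in principle be launched from low points. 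The missing (one-line) observation is already implicit in your monotonicity claim: for $n\geqslant N$, the sequence strictly decreases whenever $u_{n}>1$, so a new maximum can only be created in a single step from the region $u_{n}\leqslant 1$, and there the growth bound caps the overshoot, e.g.\ $u_{n+1}\leqslant u_{n}+\gamma_{n}K'(2+|u_{n}|)\leqslant 5/2$ when $u_{n}\in[-1,1]$, while from $u_{n}<-1$ one has $u_{n+1}\leqslant u_{n}/2+1<1$; hence $\sup_{m\geqslant N}u_{m}\leqslant\max\{u_{N},5/2\}$, and symmetrically below. With these two repairs the proof is complete and correct.
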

The representation of the stochastic process $\{S_{n}/n\}$ in \eqref{sa_1} is of the same form as that of $\{x_{n}\}$ in \eqref{general_stoch_approx_form}, once we set $\alpha=0$, replace $x_{n}$ by $S_{n}/n$, and replace the function $F$ by the function $h$. Since $H$, defined in \eqref{H_defn}, is a polynomial on $[-1,1]$, and $h$ is as defined in \eqref{h_error_defns}, it is evident that $h$ is a continuous function. We now verify that \eqref{eq:uniform_bound_cond} holds for the function $h$, by expanding the polynomial that $h$ is, as follows:
\begin{align}\label{h_polynomial_form}
h(x)=\sum_{j=0}^{k}a_{j}x^{j}, \text{ where } a_{j}= 
  \begin{cases} 
   2^{1-k}\sum_{i=0}^{k}g\left(\frac{i}{k}\right){k \choose i}\sum_{\ell=0}^{j}(-1)^{j-\ell}{i \choose \ell}{k-i \choose j-\ell} & \text{for } j \in [k]\setminus\{1\}, \\
   2^{1-k}\sum_{i=0}^{k}g\left(\frac{i}{k}\right){k \choose i}(2i-k)-1 & \text{for } j=1,\\
   2^{1-k}\sum_{i=0}^{k}g\left(\frac{i}{k}\right){k \choose i}-1 & \text{for } j=0.
  \end{cases}
\end{align}
This then allows us to write
\begin{equation}
\left|h(x)\right|\leqslant \sum_{j=0}^{k}\left|a_{j}\right||x|^{j}\leqslant K(1+|x|) \text{ for all } x \in [-1,1],\quad \text{ where }
K=\max\left\{\left|a_{0}\right|,\sum_{j=1}^{k}\left|a_{j}\right|\right\}.\nonumber
\end{equation}
From \eqref{h_error_defns}, we have $\gamma_{n}=(n+1)^{-1}$, so that $\sum_{n}\gamma_{n}$ diverges. We recall from above that $\{\epsilon_{n+1}\}$ forms a martingale difference sequence, which, in turn, implies that so does $\{\gamma_{n}\epsilon_{n+1}\}$. We now prove the following lemma:
\begin{lemma}
The sum $\sum_{n}\gamma_{n}\epsilon_{n+1}$ converges almost surely.
\end{lemma}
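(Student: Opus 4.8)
The plan is to show that $\sum_n \gamma_n \epsilon_{n+1}$ converges almost surely by invoking the martingale convergence theorem for $L^2$-bounded martingales. Since we have already established via \eqref{epsilon_cond_exp} that $\{\gamma_n \epsilon_{n+1}\}$ forms a martingale difference sequence, the partial sums $M_N = \sum_{n=1}^{N} \gamma_n \epsilon_{n+1}$ form a martingale with respect to the filtration $\{\mathcal{F}_n\}$. The standard sufficient condition for almost sure convergence of such a martingale is that the sum of conditional variances be almost surely finite, i.e.\ that $\sum_{n} \E\bigl[(\gamma_n \epsilon_{n+1})^2 \,\big|\, \mathcal{F}_n\bigr] < \infty$ almost surely. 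Thus the whole proof reduces to bounding these conditional second moments.

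First I would compute or bound $\E\bigl[\epsilon_{n+1}^2 \,\big|\, \mathcal{F}_n\bigr]$. Recall from \eqref{h_error_defns} that $\epsilon_{n+1} = X_{n+1} - H(S_n/n)$, and from \eqref{epsilon_cond_exp} that $\E[X_{n+1} \mid \mathcal{F}_n] = H(S_n/n)$. Since $X_{n+1} \in \{\pm 1\}$, we have $X_{n+1}^2 = 1$, and hence
\begin{equation}
\E\bigl[\epsilon_{n+1}^2 \,\big|\, \mathcal{F}_n\bigr] = \E\bigl[X_{n+1}^2 \,\big|\, \mathcal{F}_n\bigr] - H\left(\frac{S_n}{n}\right)^2 = 1 - H\left(\frac{S_n}{n}\right)^2 \leqslant 1.\nonumber
\end{equation}
This uniform bound is the crucial observation: the conditional variance of each error term is at most $1$, regardless of the history. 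Consequently,
\begin{equation}
\sum_{n=1}^{\infty} \E\bigl[(\gamma_n \epsilon_{n+1})^2 \,\big|\, \mathcal{F}_n\bigr] = \sum_{n=1}^{\infty} \gamma_n^2\, \E\bigl[\epsilon_{n+1}^2 \,\big|\, \mathcal{F}_n\bigr] \leqslant \sum_{n=1}^{\infty} \frac{1}{(n+1)^2} < \infty,\nonumber
\end{equation}
where I have used $\gamma_n = (n+1)^{-1}$ from \eqref{h_error_defns} and the convergence of the $p$-series with $p=2$.

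Having verified that the sum of conditional variances is finite (in fact deterministically bounded, hence trivially finite almost surely), I would conclude by appealing to the $L^2$ martingale convergence theorem: a martingale whose increments have summable conditional variances converges almost surely to a finite limit. Applying this to $M_N = \sum_{n=1}^{N} \gamma_n \epsilon_{n+1}$ yields the almost sure convergence of $\sum_n \gamma_n \epsilon_{n+1}$, as claimed. I do not anticipate any serious obstacle here; the only mild subtlety is ensuring that the correct form of the martingale convergence criterion is cited — one typically phrases it either through the conditional quadratic variation $\langle M \rangle_\infty < \infty$ almost surely implying convergence of $M_N$, or through the unconditional $L^2$-boundedness $\sup_N \E[M_N^2] = \sum_n \E[(\gamma_n \epsilon_{n+1})^2] < \infty$, which here follows immediately by taking expectations of the bound above. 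Either route closes the argument cleanly.
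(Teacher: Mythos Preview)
Your proposal is correct and follows essentially the same approach as the paper: define the partial sums as a martingale, compute the conditional second moment of each increment as $\gamma_n^2\bigl(1-H(S_n/n)^2\bigr)\leqslant(n+1)^{-2}$, observe that the sum of these bounds converges, and conclude via the martingale convergence theorem. The paper additionally records the two-sided bound $-1\leqslant H(x)\leqslant 1$ explicitly and cites Theorem~4.5.2 of Durrett, but the substance of the argument is identical.
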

\begin{proof}
Let us define $M_{0}=0$ and $M_{n}=\sum_{i=1}^{n}\gamma_{i}\epsilon_{i+1}$ for each $n \in \mathbb{N}$, so that $\{M_{n}\}$ forms a martingale with respect to the filtration $\{\mathcal{F}_{n+1}\}$ (note that $M_{n}$, by definition, is measurable with respect to $\mathcal{F}_{n+1}$). Using \eqref{h_error_defns}, \eqref{cond_exp_constant_sample_size_with_replacement} and the fact that $X_{i+1}\in\{\pm1\}\implies X_{i+1}^{2}=1$, the \emph{increasing process} $\{I_{n}\}$, associated with $\{M_{n}\}$, is given by:
\begin{align}
I_{n}={}&\sum_{i=1}^{n}\gamma_{i}^{2}\E\left[\epsilon_{i+1}^{2}\big|\mathcal{F}_{i}\right]
=\sum_{i=1}^{n}\gamma_{i}^{2}\E\left[\left\{X_{i+1}-H\left(\frac{S_{i}}{i}\right)\right\}^{2}\Bigg|\mathcal{F}_{i}\right]=\sum_{i=1}^{n}\frac{1}{(i+1)^{2}}\left[1-H^{2}\left(\frac{S_{i}}{i}\right)\right].\label{martingale_diff_second_moment}
\end{align} 
Here, the notation $H^{2}(x)$ indicates $(H(x))^{2}$. By definition of $H$ in \eqref{H_defn}, and the observation that $0\leqslant g(x)\leqslant 1$ for all $x \in [0,1]$ (which follows from \eqref{g_defn} and the fact that $f$ takes values in $[0,1]$), we have:
\begin{align}
-1 \leqslant H(x) \leqslant 2^{1-k}\sum_{i=0}^{k}{k \choose i}(1+x)^{i}(1-x)^{k-i}-1=1 \text{ for all } x \in [-1,1].\label{H_bounds}
\end{align}
Consequently, the final expression in \eqref{martingale_diff_second_moment} can be bounded above by $\sum_{i=1}^{n}(i+1)^{-2}$, which tells us that $I_{\infty}=\lim_{n \rightarrow \infty}I_{n}$ is finite almost surely. By Theorem 4.5.2.\ of \cite{durrett2019probability}, we conclude that the martingale $\{M_{n}\}$ converges to a finite limit almost surely. This completes the proof of our claim.
\end{proof}

The only criterion of Theorem~\ref{thm:gen_stoch_approx_a.s.} that is left to be verified for our drift function, $h$, is \eqref{eq:attractor_cond}, with $\alpha=0$. From \eqref{H_defn} and \eqref{H_bounds}, we note that $H$ is a continuous function (since it is a polynomial) mapping the convex, compact subset $[-1,1]$ of $\mathbb{R}$ to itself, so that by Brouwer's Fixed Point Theorem, $H$ must have at least one fixed point in $[-1,1]$. We now argue that neither $-1$ nor $1$ can be a fixed point of $H$ when $p\in(0,1)\setminus\{1/2\}$ (which is what we have assumed in \S\ref{sec:model}), which is equivalent to showing that $g(0)>0$ and $g(1)<1$, since $H(-1)=2g(0)-1$ and $H(1)=2g(1)-1$. Since $g(0)=(2p-1)f(0)+(1-p)$, it is immediate that $g(0)>0$ whenever $p\in(1/2,1)$, whereas when $p\in(0,1/2)$, we have $g(0)>0$ if and only if $f(0)<(1-2p)^{-1}(1-p)$, which is always true since $f(0)\leqslant 1$ and $(1-2p)^{-1}(1-p)>1$ for $p\in(0,1/2)$. Likewise, $g(1)=(2p-1)f(1)+(1-p)<1$ whenever $p\in(0,1/2)$, whereas when $p\in(1/2,1)$, we have $g(1)<1$ if and only if $f(1)<(2p-1)^{-1}p$, which is always true since $f(1)\leqslant 1$ and $(2p-1)^{-1}p>1$ for $p\in(1/2,1)$.

\begin{lemma}\label{lem:if_fixed_point_unique}
If $H$ has a unique fixed point, say $x^{*}$, in $(-1,1)$, then $h$ satisfies \eqref{eq:attractor_cond} with $\alpha=0$.
\end{lemma}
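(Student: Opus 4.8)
The plan is to show that the drift function $h(x)=H(x)-x$ is strictly positive on $[-1,x^{*})$ and strictly negative on $(x^{*},1]$. This is precisely \eqref{eq:attractor_cond} with $\alpha=0$: on each side of $x^{*}$ the two factors $h(x)$ and $(x-x^{*})$ carry opposite signs, so that $h(x)(x-x^{*})<0$ for every $x\neq x^{*}$.

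First I would pin down the sign of $h$ at the two endpoints. From \eqref{H_defn} we have $H(-1)=2g(0)-1$ and $H(1)=2g(1)-1$, whence $h(-1)=2g(0)$ and $h(1)=2(g(1)-1)$. Exactly as in the proof of Lemma~\ref{lem:fixed_point_exists}, whichever of \eqref{p>1/2} and \eqref{p<1/2} is in force yields $g(0)>0$ and $g(1)<1$ simultaneously: when $p>1/2$ one has $g(0)=(1-p)+f(0)(2p-1)\geqslant 1-p>0$ automatically, while the hypothesis $f(1)<p/(2p-1)$ rearranges to $g(1)<1$, and the case $p<1/2$ is symmetric. Consequently $h(-1)>0$ and $h(1)<0$ \emph{strictly}, so neither endpoint is a fixed point of $H$, and $x^{*}$ is in fact the unique zero of $h$ throughout the closed interval $[-1,1]$.

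Next I would exploit continuity together with the uniqueness of the zero. Since $h$ is continuous on $[-1,1]$ and vanishes only at $x^{*}$, it does not vanish anywhere on the connected set $[-1,x^{*})$; a continuous function that never vanishes on an interval keeps constant sign there, and because $h(-1)>0$ this sign is positive, so $h>0$ throughout $[-1,x^{*})$. The identical argument on $(x^{*},1]$, now using $h(1)<0$, gives $h<0$ throughout $(x^{*},1]$. Combining the two sign statements delivers $h(x)(x-x^{*})<0$ for all $x\neq x^{*}$ in $[-1,1]$, which is \eqref{eq:attractor_cond}.

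I do not anticipate a serious obstacle here; the only point demanding care is that the conclusion genuinely requires the \emph{strict} endpoint inequalities $h(-1)>0$ and $h(1)<0$, not merely $h(-1)\geqslant 0$ and $h(1)\leqslant 0$ (which hold unconditionally because $g(0),g(1)\in[0,1]$). Were an endpoint allowed to be a fixed point, the strict inequality in \eqref{eq:attractor_cond} would fail there. It is precisely assumption \eqref{p>1/2} or \eqref{p<1/2}, carried over from the hypotheses of Theorem~\ref{thm:main_1}, that supplies this strictness and thereby confines all fixed points to the open interval.
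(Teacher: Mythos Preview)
Your proposal is correct and follows essentially the same approach as the paper: both arguments use the strict endpoint inequalities $h(-1)>0$ and $h(1)<0$ (inherited from \eqref{p>1/2} or \eqref{p<1/2} via Lemma~\ref{lem:fixed_point_exists}) together with continuity and the uniqueness of the zero to force $h>0$ on $[-1,x^{*})$ and $h<0$ on $(x^{*},1]$. Your write-up is more explicit about why the strictness at the endpoints matters, but the underlying idea is identical.
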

\begin{proof}
Suppose $H$ has a unique fixed point, $x^{*}$, in $(-1,1)$. By the argument outlined above, $H(-1)>-1$, so that the curve $y=H(x)$ travels from \emph{above} the line $y=x$, to \emph{beneath} the line $y=x$, at $x=x^{*}$. This implies that $H(x)>x$ for each $x\in[-1,x^{*})$, and $H(x)<x$ for each $x \in (x^{*},1]$, yielding the inequality in \eqref{eq:attractor_cond}.
\end{proof}

We are now in a position to explore a few sufficient conditions that ensure that the function $H$ has a unique fixed point in $(-1,1)$, so that by Lemma~\ref{lem:if_fixed_point_unique}, we may conclude that \eqref{eq:attractor_cond} holds:
\begin{enumerate}
\item When $H$ is strictly decreasing in the interval $(-1,1)$, the curve $y=H(x)$ is strictly decreasing whereas the line $y=x$ (in other words, the identity function) is strictly increasing, and consequently, they may intersect each other at most once.
\item When $H$ is either strictly convex or strictly concave throughout the interval $(-1,1)$, the curve $y=H(x)$ may intersect the line $y=x$ at most twice in $(-1,1)$. This, along with the fact that the curve $y=H(x)$ lies \emph{above} the line $y=x$ at $x=-1$ and \emph{beneath} the line $y=x$ at $x=+1$ (since $H(-1)>-1$ and $H(1)<1$, as argued in the paragraph before Lemma~\ref{lem:if_fixed_point_unique}), ensures that the curve $y=H(x)$ intersects the line $y=x$ precisely once in $(-1,1)$.
\item $H$ is a contraction over the interval $(-1,1)$: in this case, the uniqueness of the fixed point of $H$ in $(-1,1)$ is guaranteed by the Banach fixed point theorem.
\end{enumerate}
This brings us to the end of the proof of Theorem~\ref{thm:main_1}. 
\end{proof}

\begin{proof}[Proof of Proposition~\ref{prop:main_1}]
Differentiating the expression in \eqref{H_defn} with respect to $x$, and using \eqref{g_defn}, we obtain
\begin{align}
H'(x)
={}&2^{1-k}k\sum_{i=0}^{k-1}(2p-1)\left\{f\left(\frac{i+1}{k}\right)-f\left(\frac{i}{k}\right)\right\}{k-1 \choose i}(1+x)^{i}(1-x)^{k-i-1},\label{H_derivative}
\end{align}
from which it is evident that $H$ is strictly decreasing throughout $(-1,1)$ if one of the two criteria stated in Proposition~\ref{prop:main_1} holds. This, along with \eqref{strict_decreasing} of Theorem~\ref{thm:main_1}, completes the proof of Proposition~\ref{prop:main_1}.
\end{proof}
It is worthwhile to note that the criteria in \eqref{C1} and \eqref{C2} of Proposition~\ref{prop:main_1} are merely sufficient conditions -- weaker conditions could be proposed to ensure $H$ is strictly decreasing (for instance, by choosing $f$, as introduced via \eqref{X_{n+1}_distribution}, in such a way that the expression in \eqref{H_derivative} is strictly negative for all $x\in(-1,1)$).

\begin{proof}[Proof of Proposition~\ref{prop:main_2}]
Differentiating \eqref{H_derivative} with respect to $x$, and applying \eqref{g_defn}, we obtain:
\begin{align}
H''(x)
={}&2^{1-k}k(k-1)\sum_{j=0}^{k-2}(2p-1)\left\{f\left(\frac{j+2}{k}\right)-2f\left(\frac{j+1}{k}\right)+f\left(\frac{j}{k}\right)\right\}{k-2 \choose j}(1+x)^{j}(1-x)^{k-2-j}.\nonumber
\end{align}
If one of \eqref{strict_convexity_f_cond} and \eqref{strict_concavity_f_cond} is true, $H$ is either strictly convex throughout $(-1,1)$ or strictly concave throughout $(-1,1)$, thus satisfying \eqref{strict_convex}, and the conclusion follows from Theorem~\ref{thm:main_1}.
\end{proof}

\begin{proof}[Proof of Proposition~\ref{prop:main_3}]
Since $f(x) \in [0,1]$ for each $x \in [0,1]$, which enforces the inequalities $-1 \leqslant f((i+1)/k)-f(i/k)\leqslant 1$ for each $i \in [k-1]\cup\{0\}$, we obtain, via an application of the triangle inequality to \eqref{H_derivative}:
\begin{align}
\left|H'(x)\right|
\leqslant{}&2^{1-k}k|(2p-1)|\sum_{i=0}^{k-1}{k-1 \choose i}(1+x)^{i}(1-x)^{k-i-1}=k|(2p-1)| \text{ for all } x \in (-1,1),\nonumber
\end{align}
so that for $x, y \in (-1,1)$, applying the mean value theorem, we get, for some $\xi$ lying between $x$ and $y$:
\begin{equation}
\left|H(x)-H(y)\right|=\left|H'(\xi)\right||x-y|\leqslant k|(2p-1)||x-y|.\nonumber
\end{equation}
This shows that $H$ is a contraction on $(-1,1)$ when $p$ belongs to the interval $\left(1/2-1/(2k),1/2+1/(2k)\right)$, and by \eqref{contraction} of Theorem~\ref{thm:main_1}, the conclusion follows.
\end{proof}

\begin{proof}[Proof of Theorem~\ref{thm:main_2}]
The proof of Theorem~\ref{thm:main_2} relies on Theorem 2.2.12 of \cite{duflo2013random}, stated here:
\begin{theorem}[Theorem 2.2.12 of \cite{duflo2013random}]\label{thm:gen_stoch_approx_dist}
Suppose $\{A_{n}\}$ and $\{B_{n}\}$ denote a couple of sequences of random variables with values in $\mathbb{R}^{d}$, each of which is adapted to a filtration $\{\mathcal{G}_{n}\}$, and linked by the equation
\begin{equation}
A_{n+1}=A_{n}+n^{-1}B_{n+1},\label{eq:gen_stoch_approx_dist}
\end{equation}
such that
\begin{equation}
\E\left[B_{n+1}\big|\mathcal{G}_{n}\right]=F(A_{n}) \quad \text{and} \quad \E\left[\left\{B_{n+1}-F(A_{n})\right\}^{T}\left\{B_{n+1}-F(A_{n})\right\}\big|\mathcal{G}_{n}\right]=\Gamma(A_{n}).\label{eq:main_2_1}
\end{equation}
In addition, the following assumptions are made:
\begin{enumerate}[label=(H\arabic*), ref=H\arabic*]
\item \label{E1} the function $F$ is twice continuously differentiable, such that there exists a unique $x^{*}$ with
\begin{equation}
F(x^{*})=0 \quad \text{and} \quad \langle F(x),x-x^{*}\rangle<0 \quad \text{for all } x \neq x^{*},\label{unique_attractor_cond}
\end{equation}
and there exists $K>0$ such that for all $x \in \mathbb{R}^{d}$, we have
\begin{equation}
||F(x)||^{2}\leqslant K\left(1+||x||^{2}\right);\label{bound_1:thm_dist}
\end{equation}
\item \label{E2} the function $\Gamma$ is continuous in the neighbourhood of $x^{*}$ and satisfies, for some $K > 0$, 
\begin{equation}
\sigma^{2}(x)=\text{Trace}\left(\Gamma(x)\right)\leqslant K\left(1+||x||^{2}\right);\label{bound_2:thm_dist}
\end{equation}
\item \label{E3} there exists $a>0$ such that $\sup_{n}\E\left[||B_{n+1}-F(A_{n})||^{2+a}\big|\mathcal{G}_{n}\right]<\infty$.
\end{enumerate}
If the Jacobian matrix of $F$ at $x^{*}$ is equal to $-\tau I$ for some $\tau>0$, then, setting $\Gamma^{*}=\Gamma(x^{*})$, we have
\begin{align}
{}&\sqrt{n}\left(A_{n}-x^{*}\right)\xlongrightarrow{\text{D}}N\left(0,\Gamma^{*}(2\tau-1)^{-1}\right) \quad \text{when} \quad \tau>1/2,\nonumber\\
{}& \sqrt{\frac{n}{\ln n}}(A_{n}-x^{*})\xlongrightarrow{\text{D}}N(0,\Gamma^{*}) \quad \text{when} \quad \tau=1/2,\nonumber
\end{align}
and $n^{\tau}(A_{n}-x^{*})$ converges almost surely to a finite random variable when $\tau<1/2$.
\end{theorem}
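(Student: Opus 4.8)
The plan is to prove the three‑regime statement along the classical route for Robbins--Monro recursions: pin the iterates down at the attractor, linearize, and then read off the Gaussian (or almost sure) limit from a normalized martingale. Throughout I would center the problem by setting $\theta_{n}=A_{n}-x^{*}$ and $\xi_{n+1}=B_{n+1}-F(A_{n})$, so that $\{\xi_{n+1}\}$ is a $\{\mathcal{G}_{n}\}$‑martingale difference sequence with conditional covariance $\Gamma(A_{n})$ as in \eqref{eq:main_2_1}, and \eqref{eq:gen_stoch_approx_dist} reads $\theta_{n+1}=\theta_{n}+\tfrac1n\{F(A_{n})+\xi_{n+1}\}$. \emph{Step 1 (almost sure convergence to the attractor).} With the Lyapunov function $V(\theta)=\|\theta\|^{2}$, a one‑step computation gives
\begin{equation}
\E\bigl[V(\theta_{n+1})\mid\mathcal{G}_{n}\bigr]\le \Bigl(1+\tfrac{c}{n^{2}}\Bigr)V(\theta_{n})+\tfrac{2}{n}\langle F(A_{n}),\theta_{n}\rangle+\tfrac{c'}{n^{2}},\nonumber
\end{equation}
where the $O(n^{-2})$ terms come from bounding $\|F(A_{n})\|^{2}+\sigma^{2}(A_{n})$ via the growth conditions \eqref{bound_1:thm_dist} and \eqref{bound_2:thm_dist} together with $\|A_{n}\|^{2}\le 2V(\theta_{n})+2\|x^{*}\|^{2}$. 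Since $\sum_{n}n^{-2}<\infty$ and the drift term is nonpositive by \eqref{unique_attractor_cond}, the Robbins--Siegmund supermartingale lemma makes $V(\theta_{n})$ converge almost surely and forces $\sum_{n}\tfrac1n\langle -F(A_{n}),\theta_{n}\rangle<\infty$; the strict sign of the drift away from $x^{*}$ then gives $\theta_{n}\to0$ almost surely. (This is the multidimensional counterpart of Theorem~\ref{thm:gen_stoch_approx_a.s.}.)

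\emph{Step 2 (linearization and the integrating factor).} Because $F\in\mathcal{C}^{2}$ with $F(x^{*})=0$ and Jacobian $-\tau I$ at $x^{*}$, Taylor's theorem yields $F(A_{n})=-\tau\theta_{n}+r_{n}$ with $\|r_{n}\|=o(\|\theta_{n}\|)$, so
\begin{equation}
\theta_{n+1}=\Bigl(1-\tfrac{\tau}{n}\Bigr)\theta_{n}+\tfrac1n\,\xi_{n+1}+\tfrac1n\,r_{n}.\nonumber
\end{equation}
Introducing the gain product $\pi_{n}=\prod_{j=1}^{n-1}\bigl(1-\tfrac{\tau}{j}\bigr)$, whose asymptotics $\pi_{n}\sim c\,n^{-\tau}$ follow from $\log\pi_{n}=-\tau\sum_{j<n}j^{-1}+O(1)$, the recursion solves explicitly as
\begin{equation}
\theta_{n}=\pi_{n}\theta_{1}+\pi_{n}\sum_{j=1}^{n-1}\pi_{j+1}^{-1}\tfrac1j\,\xi_{j+1}+\pi_{n}\sum_{j=1}^{n-1}\pi_{j+1}^{-1}\tfrac1j\,r_{j}.\nonumber
\end{equation}
The deterministic term decays like $n^{-\tau}$, the middle sum is the martingale that produces the limit, and the third (remainder) term must be shown to be negligible after normalization.

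\emph{Step 3 (normalization, quadratic variation, martingale CLT).} Writing $M_{n}=\sum_{j=1}^{n-1}\pi_{j+1}^{-1}j^{-1}\xi_{j+1}$, I would normalize $\theta_{n}$ by $v_{n}$, where $v_{n}=\sqrt{n}$ for $\tau>1/2$, $v_{n}=\sqrt{n/\ln n}$ for $\tau=1/2$, and $v_{n}=n^{\tau}$ for $\tau<1/2$, each chosen so that $v_{n}\pi_{n}$ stays bounded. The predictable quadratic variation of $v_{n}\pi_{n}M_{n}$ is $v_{n}^{2}\pi_{n}^{2}\sum_{j}\pi_{j+1}^{-2}j^{-2}\Gamma(A_{j})$; since $A_{j}\to x^{*}$ almost surely and $\Gamma$ is continuous at $x^{*}$ by \eqref{E2}, one replaces $\Gamma(A_{j})$ by $\Gamma^{*}$. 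Using $\pi_{j}^{-2}\sim c^{-2}j^{2\tau}$, the scalar sum $\sum_{j\le n}j^{2\tau-2}$ is $\sim n^{2\tau-1}/(2\tau-1)$ for $\tau>1/2$ and $\sim\ln n$ for $\tau=1/2$, so the normalized quadratic variation converges to $\Gamma^{*}(2\tau-1)^{-1}$, respectively $\Gamma^{*}$ (the Lyapunov equation $(1-2\tau)\Sigma+\Gamma^{*}=0$ for the $\sqrt{n}$‑scaling reproduces this covariance). The conditional Lindeberg condition is supplied by the uniform $(2+a)$‑moment bound \eqref{E3}, and the martingale central limit theorem delivers the two stated Gaussian limits. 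For $\tau<1/2$ the weights $\pi_{j+1}^{-1}j^{-1}$ are square‑summable after scaling, so $M_{n}$ converges almost surely to a finite limit, giving almost sure convergence of $n^{\tau}\theta_{n}$.

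\emph{Main obstacle.} The crux is Step~2's remainder $\tfrac1n r_{n}$: to show $v_{n}\pi_{n}\sum_{j}\pi_{j+1}^{-1}j^{-1}r_{j}\to0$ one needs an a priori rate, e.g.\ $\E\|\theta_{n}\|^{2}=O(1/n)$ for $\tau>1/2$, which is itself a consequence of the fluctuation analysis one is trying to run, creating a bootstrapping loop. I would break it by first extracting the crude $L^{2}$ rate directly from the recursion for $\E V(\theta_{n})$ in Step~1, using only the linear drift $-\tau\theta_{n}$ and the growth bounds, and then feeding that rate back to dominate the remainder sum (in probability for the two CLT regimes, almost surely for $\tau<1/2$). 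The boundary case $\tau=1/2$ is the most delicate, since the logarithmic divergence of the quadratic‑variation sum means the remainder must be controlled on exactly the same $\sqrt{\ln n}$ scale as the fluctuations themselves.
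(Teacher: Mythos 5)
This statement is not actually proved in the paper: it is quoted verbatim as Theorem 2.2.12 of \cite{duflo2013random} and invoked as a black box (in the proof of Theorem~\ref{thm:main_2}). Your outline is, in substance, the canonical proof of that theorem, and it is also exactly the skeleton the paper adapts by hand when it proves Theorem~\ref{thm:main_4} (and Theorems~\ref{thm:main_7} and \ref{thm:main_8}), where Duflo's statement cannot be applied directly: centre at $x^{*}$, peel off the linearized drift with the integrating-factor product $\pi_{n}=\prod_{j}(1-\tau/j)\sim c\,n^{-\tau}$ (the paper's $\beta_{n}$), decompose $\theta_{n}$ into an initial-condition term, a weighted martingale, and a remainder, then settle the martingale term by the martingale CLT -- quadratic-variation convergence via $A_{n}\rightarrow x^{*}$ a.s.\ together with continuity of $\Gamma$ at $x^{*}$, and Lindeberg/Lyapunov via the $(2+a)$-moment bound -- in the regimes $\tau>1/2$ and $\tau=1/2$, with almost sure convergence of the weighted martingale when $\tau<1/2$. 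All of that matches the cited source and the paper's own adaptation.

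One step, as you have written it, would fail and needs the standard repair. Your plan to extract the a priori rate $\E\|\theta_{n}\|^{2}=O(1/n)$ ``directly from the recursion for $\E V(\theta_{n})$\ldots using only the linear drift $-\tau\theta_{n}$'' is not available globally: \eqref{unique_attractor_cond} gives only strict negativity of $\langle F(x),x-x^{*}\rangle$, while the linear-rate inequality $\langle F(x),x-x^{*}\rangle\leqslant(-\tau+\eta)\|x-x^{*}\|^{2}$ holds only in a neighbourhood of $x^{*}$ where the Taylor expansion is valid, so the unconditional second-moment recursion does not close. The fix -- used by Duflo and reproduced by the paper in the proof of Theorem~\ref{thm:main_4} -- is to localize: introduce the stopping time $T=\inf\{n\geqslant j:\|A_{n}-x^{*}\|>\epsilon\}$, run the recursion on the truncated moments $u_{n}=\E\left[\|\theta_{n}\|^{2}\chi\{T>n\}\right]$, and use the almost sure convergence $A_{n}\rightarrow x^{*}$ from your Step~1 to make $\Prob[T<\infty]$ small for $j$ large. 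Two further small points: since $F\in\mathcal{C}^{2}$, the Taylor remainder is in fact $O(\|\theta_{n}\|^{2})$ near $x^{*}$, and this quadratic control is genuinely needed -- your weaker claim $\|r_{n}\|=o(\|\theta_{n}\|)$ is insufficient at the boundary case $\tau=1/2$, where $\E\|\theta_{j}\|\asymp\sqrt{\ln j/j}$ makes the weighted remainder sum diverge on the $\sqrt{\ln n}$ scale, whereas the quadratic bound yields a summable $\sum_{j}j^{-3/2}\ln j$; and for $\tau\geqslant1$ the product $\pi_{n}$ must be started beyond $j=\lceil\tau\rceil$ so that its factors stay positive. With the localization inserted, your argument closes and coincides with the proof in the cited source.
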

Here, the notation $\langle\cdot,\cdot\rangle$ indicates the usual inner product, and $I$ indicates the $d\times d$ identity matrix.

We begin the proof of Theorem~\ref{thm:main_2} by setting, for each $n \in \mathbb{N}$ with $n\geqslant 3$,
\begin{equation}
\mathcal{G}_{n}=\mathcal{F}_{n-1},\quad A_{n}=\frac{S_{n-1}}{n-1} \quad \text{and} \quad B_{n}=h\left(\frac{S_{n-2}}{n-2}\right)+\epsilon_{n-1},\label{A_{n},B_{n}_defns}
\end{equation}
where we recall for the reader that $\mathcal{F}_{n}$ is the $\sigma$-field generated by the steps $X_{1}, X_{2}, \ldots, X_{n}$ taken by the walker up to and including time-stamp $n$, and $h$ and $\{\epsilon_{n}\}$ are as defined in \eqref{h_error_defns}. Incorporating the definitions from \eqref{A_{n},B_{n}_defns} into \eqref{eq:gen_stoch_approx_dist}, we obtain the relation given by \eqref{sa_1}. From \eqref{A_{n},B_{n}_defns}, \eqref{cond_exp_constant_sample_size_with_replacement} and \eqref{h_error_defns}, we have $\E\left[B_{n+1}\big|\mathcal{G}_{n}\right]=h(A_{n})$, so that the function $F$ of \eqref{eq:main_2_1} is replaced by the function $h$ in our set-up. Using \eqref{A_{n},B_{n}_defns}, \eqref{h_error_defns}, \eqref{cond_exp_constant_sample_size_with_replacement} and the fact that $X_{n}\in\{\pm 1\} \implies X_{n}^{2}=1$, we get, for $H$ as defined in \eqref{H_defn}:
\begin{align}
\E\left[\left\{B_{n+1}-F(A_{n})\right\}^{2}\big|\mathcal{G}_{n}\right]
=\Gamma\left(\frac{S_{n-1}}{n-1}\right), \quad \text{where} \quad \Gamma(x)=1-H^{2}(x) \text{ for } x\in[-1,1].\label{H(S_{n}/n)_bounded_by_1}
\end{align}
Since $H$ is a polynomial, $h$, defined in \eqref{h_error_defns}, is in $\mathcal{C}^{2}[-1,1]$. As argued in the paragraph preceding  Lemma~\ref{lem:if_fixed_point_unique} and Lemma~\ref{lem:if_fixed_point_unique} itself, if $H$ has a unique fixed point, $x^{*}$, in $[-1,1]$, and $p\in(0,1)\setminus\{1/2\}$, the inequality in \eqref{eq:attractor_cond}, with $\alpha=0$, is satisfied when the function $F$ is replaced by the function $h$ -- equivalently, \eqref{unique_attractor_cond} holds with $F$ replaced by $h$. From \eqref{h_polynomial_form} and the fact that $h$ is defined on $[-1,1]$, we have 
\begin{equation}
h^{2}(x)\leqslant K'(1+x^{2}), \quad \text{where} \quad K'=\max\left\{a_{0}^{2}+2|a_{0}|\sum_{i=1}^{k}|a_{i}|,\left(\sum_{i=1}^{k}|a_{i}|\right)^{2}\right\},\nonumber
\end{equation}
thus showing that \eqref{bound_1:thm_dist} holds. This completes the verification of all criteria stated in \eqref{E1}. Since $H$ is a polynomial, the function $\Gamma$, defined in \eqref{H(S_{n}/n)_bounded_by_1}, is continuous throughout $[-1,1]$, and \eqref{H_bounds} implies that $\Gamma(x)\leqslant 1$ for $x\in[-1,1]$, completing the verification of \eqref{E2}. Setting $a=2$, and using \eqref{A_{n},B_{n}_defns}, \eqref{h_error_defns}, \eqref{cond_exp_constant_sample_size_with_replacement} and the fact that $X_{n}\in\{\pm 1\} \implies X_{n}^{2}=1$, we have:
\begin{align}
{}&\E\left[||B_{n+1}-F(A_{n})||^{2+a}\big|\mathcal{G}_{n}\right]=\E\left[\left\{X_{n}-H\left(\frac{S_{n-1}}{n-1}\right)\right\}^{4}\big|\mathcal{F}_{n-1}\right]
=1+2H^{2}\left(\frac{S_{n-1}}{n-1}\right)-3H^{4}\left(\frac{S_{n-1}}{n-1}\right).\label{D3_computation}
\end{align}
\sloppy Applying \eqref{H_bounds} on \eqref{D3_computation} yields $\sup_{n}\E\left[||B_{n+1}-F(A_{n})||^{2+a}\big|\mathcal{G}_{n}\right]\leqslant 4/3$, thus completing the verification of \eqref{E3}. Finally, as argued while proving Lemma~\ref{lem:if_fixed_point_unique}, when $H$ has a unique fixed point, $x^{*}$, in $[-1,1]$, and $p\in(0,1)\setminus\{1/2\}$, the curve $y=H(x)$ travels from above $y=x$ to beneath $y=x$ at $x=x^{*}$, necessitating that the slope of $y=H(x)$ is strictly less than the slope of $y=x$ at $x=x^{*}$ -- equivalently, $H'(x^{*})<1$. Setting $\tau=-h'(x^{*})=1-H'(x^{*})$, and $\Gamma^{*}=1-H^{2}(x^{*})=1-{x^{*}}^{2}$, the rest of the conclusion of Theorem~\ref{thm:main_2} follows via Theorem~\ref{thm:gen_stoch_approx_dist}.
\end{proof}


\begin{proof}[Proof of Theorem~\ref{thm:main_3}]
Similar to the derivation of \eqref{C_{n}^{+}_cond_dist}, we deduce here that for $i\in[k(n)]\cup\{0\}$,
\begin{equation}
\Prob\left[C_{n}^{+}=i\big|\mathcal{F}_{n}\right]=\frac{1}{n^{k(n)}}{k(n) \choose i}\left(\frac{n+S_{n}}{2}\right)^{i}\left(\frac{n-S_{n}}{2}\right)^{k(n)-i}.\nonumber
\end{equation}
Incorporating this, we deduce, similar to how we derived \eqref{cond_probab_constant_sample_size_with_replacement} and \eqref{cond_exp_constant_sample_size_with_replacement}, that
\begin{equation}
\E\left[X_{n+1}\big|\mathcal{F}_{n}\right]=H_{n}\left(\frac{S_{n}}{n}\right) \quad \text{for each } n \in \mathbb{N}, \label{X_{n+1}_cond_exp_growing_sample_size}
\end{equation}
where we define the sequence of polynomials, $\{H_{n}\}$, on $[-1,1]$, as follows:
\begin{equation}
H_{n}(x)=2^{-k(n)}\sum_{i=0}^{k(n)}\left\{2g\left(\frac{i}{k(n)}\right)-1\right\}{k(n) \choose i}(1+x)^{i}(1-x)^{k(n)-i} \quad \text{for all }x\in[-1,1].\label{H_{n}_defn}
\end{equation} 
With a change of variable from $x$ to $y=1/2(1+x)$, we can rewrite the polynomial $H_{n}$ as
\begin{equation}
J_{n}(y)=\sum_{i=0}^{k(n)}\left\{2g\left(\frac{i}{k(n)}\right)-1\right\}{k(n) \choose i}y^{i}(1-y)^{k(n)-i} \text{ for } y \in [0,1].\nonumber
\end{equation}
If we set $\hat{g}(x)=2g(x)-1$ for $x \in [0,1]$, then $J_{n}$ equals the Bernstein polynomial of order $k(n)$ corresponding to $\hat{g}$. As long as $k(n)\uparrow \infty$ as $n\rightarrow \infty$, and $f$, defined in \eqref{X_{n+1}_distribution}, is continuous on $[0,1]$ (which ensures, via \eqref{g_defn}, that $\hat{g}\in\mathcal{C}^{0}[0,1]$), we have, by Weierstrass Approximation Theorem (Theorem 1.1.1 of \cite{lorentz2012bernstein}):
\begin{equation}
\lim_{n \rightarrow \infty}H_{n}(2y-1)=\lim_{n\rightarrow\infty}J_{n}(y)=\hat{g}(y) \text{ uniformly for all } y \in [0,1].\label{uniform_convergence}
\end{equation}
Performing a computation similar to that used to derive \eqref{sa_1}, we obtain
\begin{align}
\frac{S_{n+1}}{n+1}={}&\frac{S_{n}+X_{n+1}}{n+1}=\frac{S_{n}}{n}+\frac{1}{n+1}\left\{-\frac{S_{n}}{n}+X_{n+1}\right\}
=\frac{S_{n}}{n}+\frac{1}{n+1}\left\{\hat{h}\left(\frac{S_{n}}{n}\right)+\hat{\epsilon}_{n+1}+\delta_{n}\right\},\label{sa_2}
\end{align}
where we set, for $x\in[-1,1]$ and for $n\in \mathbb{N}$,
\begin{equation}
\hat{h}(x)=2g\left(\frac{1+x}{2}\right)-x-1, \quad \hat{\epsilon}_{n+1}=X_{n+1}-H_{n}\left(\frac{S_{n}}{n}\right) \quad \text{and} \quad \delta_{n}=H_{n}\left(\frac{S_{n}}{n}\right)-\hat{g}\left(\frac{1}{2}\left(1+\frac{S_{n}}{n}\right)\right).\label{hat_h_error_defns}
\end{equation}
The relation given by \eqref{sa_2} is of the same form as \eqref{general_stoch_approx_form} of Theorem~\ref{thm:gen_stoch_approx_a.s.}, with $x_{n}=S_{n}/n$ for each $n \in \mathbb{N}$, $\gamma_{n}=(n+1)^{-1}$ satisfying the criteria that $\gamma_{n}$ decreases to $0$ and $\sum_{n}\gamma_{n}$ diverges, the function $F$ replaced by the function $\hat{h}$ defined in \eqref{hat_h_error_defns}, $\alpha=0$, and $\epsilon_{n+1}=\hat{\epsilon}_{n+1}+\delta_{n}$, where $\hat{\epsilon}_{n+1}$ and $\delta_{n}$ are as defined in \eqref{hat_h_error_defns}.

We now verify the rest of the criteria stated in Theorem~\ref{thm:gen_stoch_approx_a.s.}, via Lemma~\ref{lem:series_gamma_{n}_epsilon_{n}_converges}, Lemma~\ref{lem:g_unique_fixed_point_consequence} and a short discussion in between.
\begin{lemma}\label{lem:series_gamma_{n}_epsilon_{n}_converges}
The sequence $\left\{\sum_{n=1}^{N}\gamma_{n}\epsilon_{n+1}\right\}$, where $\epsilon_{n+1}=\hat{\epsilon}_{n+1}+\delta_{n}$ (as defined in \eqref{hat_h_error_defns}), converges almost surely to a finite limit as $N \rightarrow \infty$.
\end{lemma}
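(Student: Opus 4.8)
The plan is to split the partial sums according to the two genuinely different contributions in $\epsilon_{n+1}=\hat\epsilon_{n+1}+\delta_n$ (as introduced in \eqref{hat_h_error_defns}): a \emph{martingale} part $\sum_n\gamma_n\hat\epsilon_{n+1}$ and a \emph{bias} part $\sum_n\gamma_n\delta_n$, where $\gamma_n=(n+1)^{-1}$. For the first, \eqref{X_{n+1}_cond_exp_growing_sample_size} gives $\E[\hat\epsilon_{n+1}\mid\mathcal{F}_n]=0$, so $\{\gamma_n\hat\epsilon_{n+1}\}$ is a martingale difference sequence with respect to $\{\mathcal{F}_{n+1}\}$; for the second, $\delta_n$ is $\mathcal{F}_n$-measurable, being a deterministic function of $S_n/n$, and is precisely the Bernstein-approximation error $J_n(y_n)-\hat{g}(y_n)$ evaluated at the random point $y_n=\tfrac12(1+S_n/n)\in[0,1]$, since $H_n(S_n/n)=J_n(y_n)$ by the change of variable preceding \eqref{uniform_convergence}. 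I would handle the two sums separately and then combine the conclusions.

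For the martingale part, I would mimic the increasing-process argument already used in the constant-$k$ case. Setting $M_N=\sum_{n=1}^N\gamma_n\hat\epsilon_{n+1}$, the associated increasing process is $I_N=\sum_{n=1}^N\gamma_n^2\,\E[\hat\epsilon_{n+1}^2\mid\mathcal{F}_n]=\sum_{n=1}^N(n+1)^{-2}\{1-H_n^2(S_n/n)\}$, using $X_{n+1}^2=1$ together with \eqref{X_{n+1}_cond_exp_growing_sample_size}. Exactly as in the derivation of \eqref{H_bounds}, the bound $g\in[0,1]$ forces $|2g(i/k(n))-1|\leqslant 1$, and summing the binomial weights in \eqref{H_{n}_defn} gives $|H_n(x)|\leqslant 1$ for all $x\in[-1,1]$; hence $I_N\leqslant\sum_{n}(n+1)^{-2}<\infty$, so $I_\infty$ is finite almost surely, and the martingale convergence theorem (Theorem 4.5.2 of \cite{durrett2019probability}) yields the almost sure convergence of $M_N$.

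For the bias part, the point is that the hypotheses \eqref{D1}--\eqref{D4} are tailored precisely to make $\sum_n\gamma_n|\delta_n|$ converge, so that this sum converges absolutely. Since $J_n=B_{k(n)}\hat{g}$ is the Bernstein polynomial of $\hat{g}$ of degree $k(n)$, one has $|\delta_n|\leqslant\|B_{k(n)}\hat{g}-\hat{g}\|_\infty$, and I would invoke the standard quantitative Bernstein estimates (see, e.g., \cite{lorentz2012bernstein}): for merely continuous $\hat{g}$, $\|B_m\hat{g}-\hat{g}\|_\infty\leqslant\tfrac32\,\omega(\hat{g};m^{-1/2})$; for $\hat{g}\in\mathcal{C}^1$, $\|B_m\hat{g}-\hat{g}\|_\infty\leqslant C\,m^{-1/2}\,\omega(\hat{g}';m^{-1/2})$; and for $\hat{g}\in\mathcal{C}^2$, $\|B_m\hat{g}-\hat{g}\|_\infty\leqslant\|\hat{g}''\|_\infty/(8m)$, where $\omega(\cdot;\delta)$ is the modulus of continuity. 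Because $\hat{g}=2g-1$ and $g=(1-p)+(2p-1)f$ by \eqref{g_defn}, the moduli of continuity of $\hat{g},\hat{g}',\hat{g}''$ are constant multiples of those of $f,f',f''$; substituting $m=k(n)$ then turns the three estimates into bounds of the form $|\delta_n|\leqslant C\,\omega(f;k(n)^{-1/2})$, $C\,k(n)^{-1/2}\omega(f';k(n)^{-1/2})$, and $C\,k(n)^{-1}$, respectively. Multiplying by $\gamma_n=(n+1)^{-1}$ and summing, convergence then follows from \eqref{series_convergence_criterion_1} under \eqref{D1}, from \eqref{series_convergence_criterion_2} (after the Hölder bound $\omega(f;\delta)\leqslant L\delta^\alpha$) under \eqref{D2}, from \eqref{series_convergence_criterion_3} under \eqref{D3}, and from \eqref{series_convergence_criterion_4} under \eqref{D4}.

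I expect the main obstacle to be pinning down the correct quantitative Bernstein approximation rate for each smoothness class, in particular the intermediate $\mathcal{C}^1$ estimate required under \eqref{D3}, where the improvement by a factor $k(n)^{-1/2}$ over the purely continuous case must be matched to the exact form of \eqref{series_convergence_criterion_3}; the $\mathcal{C}^0$, Hölder, and $\mathcal{C}^2$ cases are comparatively routine once the corresponding classical bounds are cited. Combining the almost sure convergence of the martingale part with the absolute convergence of the bias part then gives the almost sure convergence of $\sum_{n=1}^N\gamma_n\epsilon_{n+1}$, completing the proof of the lemma.
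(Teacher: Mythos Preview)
Your proposal is correct and follows essentially the same approach as the paper: both split into the martingale part $\sum_n\gamma_n\hat\epsilon_{n+1}$ (handled via the increasing-process bound $I_N\leqslant\sum_n(n+1)^{-2}$ and Theorem~4.5.2 of \cite{durrett2019probability}) and the bias part $\sum_n\gamma_n\delta_n$ (handled via classical quantitative Bernstein approximation rates matched to each of \eqref{D1}--\eqref{D4}). The only cosmetic differences are in the specific references cited for the Bernstein bounds (the paper uses \cite{guan2009iterated}, \cite{mathe1999approximation}, and \cite{powell1981approximation} where you cite \cite{lorentz2012bernstein}) and in the exact constants, but the estimates and the structure of the argument are identical.
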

\begin{proof}
Set $M_{0}=0$ and $M_{n}=\sum_{i=1}^{n}\gamma_{i}\hat{\epsilon}_{i+1}$ for each $n \in \mathbb{N}$. By \eqref{X_{n+1}_cond_exp_growing_sample_size} and \eqref{hat_h_error_defns}, we conclude that $\{M_{n}\}$ forms a martingale with respect to the filtration $\{\mathcal{F}_{n+1}\}$ (note that $M_{n}$ is measurable with respect to $\mathcal{F}_{n+1}$). A derivation similar to that of \eqref{martingale_diff_second_moment} yields, using \eqref{X_{n+1}_cond_exp_growing_sample_size}: 
\begin{equation}
I_{n}=\sum_{i=1}^{n}\gamma_{i}^{2}\E\left[\left\{X_{i+1}-H_{i}\left(\frac{S_{i}}{i}\right)\right\}^{2}\Big|\mathcal{F}_{i}\right]=\sum_{i=1}^{n}\frac{1}{(i+1)^{2}}\left\{1-H_{i}^{2}\left(\frac{S_{i}}{i}\right)\right\},\label{increasing_process_growing_sample_size}
\end{equation}
where $\left\{I_{n}\right\}$ is the increasing process associated with $\{M_{n}\}$. By \eqref{H_{n}_defn} and $0\leqslant g(x)\leqslant 1$ for all $x \in [0,1]$ (which follows from \eqref{g_defn} and the assumption that $f$ takes values in $[0,1]$), we have
\begin{align}
{}&-1 \leqslant H_{n}(x) \leqslant 1 \text{ for each } x \in [-1,1], \text{ for each } n \in \mathbb{N}.\label{H_{n}_bounded}
\end{align}
From \eqref{H_{n}_bounded} and \eqref{increasing_process_growing_sample_size}, we have $I_{n}\leqslant\sum_{i=1}^{n}(i+1)^{-2}$, so that $I_{\infty}=\lim_{n\rightarrow \infty}I_{n}<\infty$ almost surely. By Theorem 4.5.2.\ of \cite{durrett2019probability}, we conclude that the martingale $\{M_{n}\}$ converges to a finite limit almost surely. 

Next, we concern ourselves with the almost sure convergence of $\left\{\sum_{n=1}^{N}\gamma_{n}\delta_{n}\right\}$ as $N\rightarrow \infty$. In addition to \eqref{uniform_convergence}, we now need the rate at which the sequence $\{H_{n}(2y-1)\}$ converges to $\hat{g}(y)$ for \emph{all} $y \in [0,1]$.
\begin{enumerate}
\item Setting $r=0$ in Theorem 1 of \cite{guan2009iterated} (see, also, \cite{popoviciu1935approximation} and \cite{lorentz2012bernstein}) when $f\in\mathcal{C}^{0}[0,1]$ (which implies $\hat{g}\in\mathcal{C}^{0}[0,1]$), we obtain, for each $n\in\mathbb{N}$ with $k(n) \geqslant 2$, and for all $y \in [0,1]$:
\begin{align}
\left|H_{n}(2y-1)-\hat{g}(y)\right|\leqslant C_{0}\omega\left(\hat{g}, k(n)^{-1/2}\right)=2C_{0}\left|1-2p\right|\sup_{x,y\in[0,1],|x-y|<k(n)^{-1/2}}\left|f(x)-f(y)\right|,\label{f_continuous}
\end{align}
\sloppy where 
\begin{enumerate*}[label=(\roman*)]
\item $C_{0}$ is a constant, 
\item for any $\delta > 0$, the \emph{modulus of continuity} of $\hat{g}$ is defined as $\omega(\hat{g},\delta)=\sup\left\{\left|\hat{g}(x)-\hat{g}(y)\right|:x,y\in[0,1],|x-y|<\delta\right\}$,
\item and the second step in \eqref{f_continuous} follows from observing, via \eqref{g_defn}, that $\left|\hat{g}(x)-\hat{g}(y)\right|=2\left|(1-2p)\right|\left|f(x)-f(y)\right|$.
\end{enumerate*}
Since $k(n) \uparrow \infty$ as $n \rightarrow \infty$, there exists $N_{0}\in\mathbb{N}$ with $k(n)\geqslant 2$ for $n\geqslant N_{0}$. By \eqref{f_continuous} and \eqref{hat_h_error_defns}, for $N\geqslant N_{0}$, 
\begin{align}
\left|\sum_{n=1}^{N}\gamma_{n}\delta_{n}\right|\leqslant\sum_{n=1}^{N}\frac{1}{n+1}\left|\delta_{n}\right|= O\left(\sum_{n=1}^{N}\frac{1}{n+1}\sup\left\{\left|f(x)-f(y)\right|: x,y\in[0,1],|x-y|<k(n)^{-1/2}\right\}\right),\nonumber
\end{align}
which shows that $\left\{\sum_{n=1}^{N}\gamma_{n}\delta_{n}\right\}$ converges almost surely, as $N \rightarrow \infty$, when \eqref{series_convergence_criterion_1} holds. 

\item If $f$ is H\"{o}lder-continuous with exponent $\alpha$ and constant $L$, for $0< \alpha \leqslant 1$, we have, by \eqref{g_defn}, $\left|\hat{g}(x)-\hat{g}(y)\right|\leqslant 2L|(2p-1)||x-y|^{\alpha}$ for all $x, y \in [0,1]$, so that $\hat{g}$ is H\"{o}lder-continuous with exponent $\alpha$ and constant $2L|(2p-1)|$. By Theorem 1 of \cite{mathe1999approximation}, we conclude that, for $n\in\mathbb{N}$ and $y\in[0,1]$,
\begin{equation}
\left|H_{n}(2y-1)-\hat{g}(y)\right|\leqslant 2L|(2p-1)|\left(\frac{y(1-y)}{k(n)}\right)^{\alpha/2}\leqslant 2^{1-\alpha}L|(2p-1)|k(n)^{-\alpha/2},\nonumber
\end{equation}
which, in turn, yields
\begin{align}
\left|\sum_{n=1}^{N}\gamma_{n}\delta_{n}\right|\leqslant\sum_{n=1}^{N}\frac{1}{n+1}\left|\delta_{n}\right|\leqslant 2^{1-\alpha}L|(2p-1)|\sum_{n=1}^{N}(n+1)^{-1}k(n)^{-\alpha/2}.\nonumber
\end{align}
We now conclude that $\left\{\sum_{n=1}^{N}\gamma_{n}\delta_{n}\right\}$ converges almost surely as $N \rightarrow \infty$ when \eqref{series_convergence_criterion_2} holds.

\item If $f\in\mathcal{C}^{1}[0,1]$, then, by definition of $\hat{g}$ in terms of $g$ and \eqref{g_defn}, so is $\hat{g}$. Invoking, once again, Theorem 1 of \cite{guan2009iterated}, but this time with $r=1$, we obtain, for all $n\in\mathbb{N}$ with $k(n)\geqslant 2$ and all $y \in [0,1]$:
\begin{align}\label{f_differentiable}
{}&\left|H_{n}(2y-1)-\hat{g}(y)\right|\leqslant C_{1}k(n)^{-1/2}\omega^{(1)}\left(\hat{g},k(n)^{-1/2}\right)\nonumber\\
={}&2|(2p-1)|C_{1}k(n)^{-1/2}\sup\left\{\left|f'(x)-f'(y)\right|: x,y\in[0,1],|x-y|<k(n)^{-1/2}\right\},
\end{align}
where 
\begin{enumerate*}[label=(\roman*)]
\item $\omega^{(1)}(\hat{g},\delta)$, for $\delta>0$, is the modulus of continuity of the derivative $\hat{g}'$ of $\hat{g}$, defined as $\omega^{(1)}(\hat{g},\delta)=\sup\left\{\left|\hat{g}'(x)-\hat{g}'(y)\right|: x,y\in[0,1],|x-y|<\delta\right\}$,
\item and the second step of \eqref{f_differentiable} follows from observing that $\hat{g}'(x)=2(2p-1)f'(x)$.
\end{enumerate*}
Since $k(n)\uparrow \infty$ as $n\rightarrow \infty$, there exists $N_{0}$ with $k(n)\geqslant 2$ for $n\geqslant N_{0}$. For $N\geqslant N_{0}$, by \eqref{hat_h_error_defns} and \eqref{f_differentiable}:
\begin{align}
\left|\sum_{n=1}^{N}\gamma_{n}\delta_{n}\right|=O\left(\sum_{n=1}^{N}\frac{1}{n+1}k(n)^{-1/2}\sup\left\{\left|f'(x)-f'(y)\right|: x,y\in[0,1],|x-y|<k(n)^{-1/2}\right\}\right),\nonumber
\end{align}
which shows that $\left\{\sum_{n=1}^{N}\gamma_{n}\delta_{n}\right\}$ converges almost surely, as $N \rightarrow \infty$, when \eqref{series_convergence_criterion_3} holds. 

\item If $f\in\mathcal{C}^{2}[0,1]$, then so is $\hat{g}$, and by definition of $\hat{g}$ and \eqref{g_defn}, we have $\hat{g}''(x)=2(2p-1)f''(x)$ for $x\in[0,1]$. Since $f''$ is continuous throughout the compact interval $[0,1]$, the supremum $||f''||_{\infty}=\sup\{|f''(x)|:x\in[0,1]\}$ is finite. By Exercise 6.8 of \cite{powell1981approximation}, for $n\in\mathbb{N}$,
\begin{equation}
\left|H_{n}(2y-1)-\hat{g}(y)\right|\leqslant \frac{(2p-1)||f''||_{\infty}}{4k(n)} \text{ for all } y \in [0,1].\label{Bernstein_convergence_rate_twice_differentiable}
\end{equation}
Applying this bound, we have $\left|\sum_{n=1}^{N}\gamma_{n}\delta_{n}\right|=O\left(\sum_{n=1}^{N}(n+1)^{-1}k(n)^{-1}\right)$, which shows that $\left\{\sum_{n=1}^{N}\gamma_{n}\delta_{n}\right\}$ converges almost surely, as $N \rightarrow \infty$, when \eqref{series_convergence_criterion_4} holds.
\end{enumerate}  

We may now conclude that whenever one of \eqref{D1}, \eqref{D2}, \eqref{D3} and \eqref{D4} holds, the sequence $\left\{\sum_{n=1}^{N}\gamma_{n}\epsilon_{n+1}\right\}$, where $\epsilon_{n+1}=\hat{\epsilon}_{n+1}+\delta_{n}$, converges almost surely to a finite limit as $N \rightarrow \infty$. 
\end{proof}

Since $0\leqslant f(x) \leqslant 1$ for all $x\in[0,1]$, we have, by \eqref{g_defn}, the bounds $\min\{p,1-p\}\leqslant g(x) \leqslant \max\{p,1-p\}$ for all $x \in[0,1]$. These, along with \eqref{hat_h_error_defns}, implies $\left|\hat{h}(x)\right| \leqslant \left(1+2\max\{p,1-p\}\right)(1+|x|)$ for all $x\in[-1,1]$, thus verifying \eqref{eq:uniform_bound_cond} for the function $\hat{h}$. The only criterion left to verify for $\hat{h}$ is \eqref{eq:attractor_cond}. 

\begin{lemma}\label{lem:g_unique_fixed_point_consequence}
If $g$ has a unique fixed point, $x^{*}$, in $[0,1]$ and $p\in(0,1)$, then $\hat{h}$ has a unique root, $(2x^{*}-1)$, in $[-1,1]$, and $\hat{h}$ satisfies \eqref{eq:attractor_cond} with $\alpha=0$.
\end{lemma}
\begin{proof}
We begin by noting that if $x^{*}$ is a fixed point of $g$ in $[0,1]$, the corresponding root of $\hat{h}$ is $y^{*}=(2x^{*}-1)$). If $g$ has a unique fixed point, $x^{*}$, in $[0,1]$, such that $\{g(x)-x\}(x-x^{*})<0$ for all $x \in [0,1]\setminus\{x^{*}\}$, we have, for all $y \in [-1,1]$, writing $x=(y+1)/2$:
\begin{align}
\hat{h}(y)\left\{y-(2x^{*}-1)\right\}
=4\{g(x)-x\}(x-x^{*})<0,\nonumber
\end{align} 
thus verifying \eqref{eq:attractor_cond} for $\hat{h}$ with $\alpha=0$. Since $p\in(0,1)$, the bounds $\min\{p,1-p\}\leqslant g(x) \leqslant \max\{p,1-p\}$ for all $x \in[0,1]$ imply that $g(0)>0$ and $g(1)<1$. Hence, the curve $y=g(x)$ lies above $y=x$ for $x\in[0,x^{*})$, and beneath $y=x$ for $x\in(x^{*},1]$, ensuring $\{g(x)-x\}(x-x^{*})<0$ for $x\in[0,1]\setminus\{x^{*}\}$, and proving Lemma~\ref{lem:g_unique_fixed_point_consequence}.
\end{proof}

We now verify that $g$ has a unique fixed point in $[0,1]$ whenever one of \eqref{strict_decreasing_1}, \eqref{strict_convex_1} and \eqref{contraction_1} holds:
\begin{enumerate}
\item When $f$ is differentiable, so is $g$, with $g'(x)=(2p-1)f'(x)$. If $p>1/2$ and $f$ is strictly decreasing throughout $[0,1]$, or if $p<1/2$ and $f$ is strictly increasing throughout $[0,1]$, the function $g$ becomes strictly decreasing throughout $[0,1]$, and consequently, the curve $y=g(x)$ can intersect the line $y=x$ at most once (since $g(0)>0$ and $g(1)<1$, intersection must happen at least once).

\item When $f\in\mathcal{C}^{2}[0,1]$, so is $g$, with $g''(x)=(2p-1)f''(x)$. Since $p\in(0,1)\setminus\{1/2\}$, if $f$ is either strictly convex or strictly concave throughout $[0,1]$, then $g$ is either strictly convex or strictly concave throughout $[0,1]$ as well, so that the curve $y=g(x)$ can intersect the line $y=x$ at most twice. Since $g(0)>0$ and $g(1)<1$, the curve $y=g(x)$ lies above $y=x$ at $x=0$, and beneath it at $x=1$, which means that $y=g(x)$ intersects $y=x$ precisely once.

\item If the function $f$ is Lipschitz with Lipschitz constant $c$, then so is $g$, with $|g(x)-g(y)|=|(2p-1)||f(x)-f(y)|\leqslant c|2p-1||x-y|$. If, now, $c|(2p-1)|<1$, then $g$ becomes a contraction, and by the Banach fixed point theorem, we know that $g$ has a unique fixed point in $[0,1]$. \qedhere
\end{enumerate}  
\end{proof}

\begin{proof}[Proof of Theorem~\ref{thm:main_4}]
We base our proof of Theorem~\ref{thm:main_4} on the proof of Theorem 2.2.12.\ of \cite{duflo2013random}. Our stochastic approximation process for the set-up described in \S\ref{subsec:with}, with $k(n)\uparrow \infty$ as $n\rightarrow \infty$, is given by
\begin{equation}
\left\{\frac{S_{n+1}}{n+1}-(2x^{*}-1)\right\}=\left\{\frac{S_{n}}{n}-(2x^{*}-1)\right\}+\frac{1}{n+1}Y_{n+1}, \quad \text{with} \quad Y_{n+1}=X_{n+1}-\frac{S_{n}}{n},\label{sa_3}
\end{equation}
where, by \eqref{X_{n+1}_cond_exp_growing_sample_size} and \eqref{H_{n}_defn}, we have, for each $n\in\mathbb{N}$, 
\begin{equation}
\E\left[Y_{n+1}\big|\mathcal{F}_{n}\right]=h_{n}\left(\frac{S_{n}}{n}\right)=\hat{h}_{n}\left(\frac{S_{n}}{n}-(2x^{*}-1)\right), \text{ with } h_{n}(x)=H_{n}(x)-x,\ \hat{h}_{n}(x)=h_{n}(x+2x^{*}-1).\label{h_{n}_defn}
\end{equation}
Each of $h_{n}$ and $\hat{h}_{n}$, evidently, depends on $n$, unlike the set-up described in Theorem 2.2.12.\ of \cite{duflo2013random}, wherein $\E\left[Y_{n+1}\big|\mathcal{F}_{n}\right]$ equals some function of $S_{n}/n$ where the form of the function is independent of $n$. With $\tau=1-g'(x^{*})$, as defined in the statement of Theorem~\ref{thm:main_4}, we set $\gamma_{n}=(n+1)^{-1}$ and 
\begin{equation}
Z_{n}=Y_{n}-h_{n-1}\left(\frac{S_{n-1}}{n-1}\right), \quad \theta_{n}=h_{n}\left(\frac{S_{n}}{n}\right)+\tau\left\{\frac{S_{n}}{n}-(2x^{*}-1)\right\}, \quad \alpha_{n}=1-\tau\gamma_{n}, \quad \beta_{n}=\prod_{i=1}^{n}\alpha_{i},\label{many_definitions}
\end{equation}
for all $n\in\mathbb{N}$ with $n\geqslant 2$. Iterating \eqref{sa_3} and utilizing the notations introduced in \eqref{many_definitions}, we obtain:
\begin{align}
\left\{\frac{S_{n+1}}{n+1}-(2x^{*}-1)\right\}={}&\beta_{n}\left\{X_{1}-(2x^{*}-1)\right\}+\beta_{n}M_{n+1}+\beta_{n}R_{n},\label{iterated_recurrence}
\end{align}
where $X_{1}$, recall, is the first step of the walker, and for each $n\in\mathbb{N}$, we define: 
\begin{equation}
M_{n+1}=\sum_{i=2}^{n+1}\frac{\gamma_{i-1}}{\beta_{i-1}}Z_{i} \quad \text{and} \quad R_{n}=\sum_{i=1}^{n}\frac{\gamma_{i}}{\beta_{i}}\theta_{i}.\label{M_{n},R_{n}_defns}
\end{equation}
It thus suffices for us to understand the behaviour of the final expression in \eqref{iterated_recurrence} as $n\rightarrow\infty$.

As in Stages 2 and 3 of the proof of Theorem 2.2.12.\ of \cite{duflo2013random}, with $\gamma$ the Euler's constant and some $c>0$, 
\begin{align}
\lim_{n\rightarrow\infty}\beta_{n}(n+1)^{\tau}=\lim_{n\rightarrow\infty}\beta_{n}\exp\left\{\tau\sum_{k=1}^{n}\gamma_{k}\right\}\lim_{n\rightarrow\infty}\exp\left\{\tau\left(\ln(n+1)-\sum_{k=1}^{n}\gamma_{k}\right)\right\}=c' e^{\tau(1-\gamma)}=c.\label{beta_{n}_growth_rate}
\end{align}
From \eqref{beta_{n}_growth_rate}, it follows that the first term of \eqref{iterated_recurrence}, scaled the same way as in \eqref{eq:distributional_convergence_with_replacement_growing_size}, converges almost surely to the finite random variable $c\{X_{1}-(2x^{*}-1)\}$ when $\tau<1/2$, and to $0$ when $\tau\geqslant 1/2$. 

The analysis of the asymptotic behaviour of $\beta_{n}M_{n+1}$ begins by noting, by \eqref{h_{n}_defn} and the definition of $Z_{n}$ in \eqref{many_definitions}, that $\{M_{n}\}$ forms a martingale. The idea, now, is to conclude, by Theorem 4.5.2.\ of \cite{durrett2019probability}, that $\{M_{n}\}$ converges almost surely when $\tau<1/2$, and to apply the standard martingale central limit theorem to deduce the convergence in distribution of $\beta_{n}M_{n+1}$ when $\tau\geqslant 1/2$. Letting $\{I_{n}\}_{n\geqslant 2}$ denote the increasing process associated with $\{M_{n}\}$, we deduce, similar to \eqref{increasing_process_growing_sample_size} and using \eqref{beta_{n}_growth_rate} and \eqref{H_{n}_bounded}:
\begin{align}
I_{n}={}&
\sum_{k=2}^{n}\frac{\gamma_{k-1}^{2}}{\beta_{k-1}^{2}}\left\{1-H_{k-1}^{2}\left(\frac{S_{k-1}}{k-1}\right)\right\}=O\left(\sum_{k=1}^{n}k^{-2+2\tau}\right).\label{increasing_process_growing_sample_size_weak_convergence}
\end{align}
Evidently, when $\tau<1/2$, the second series in \eqref{increasing_process_growing_sample_size_weak_convergence} converges as $n\rightarrow \infty$, so that $I_{\infty}=\lim_{n\rightarrow\infty}I_{n}$ is finite almost surely. By Theorem 4.5.2.\ of \cite{durrett2019probability}, together with \eqref{beta_{n}_growth_rate}, we conclude that $\{(n+1)^{\tau}\beta_{n}M_{n+1}\}$ converges almost surely to a finite random variable when $\tau<1/2$. We now consider $\tau\geqslant 1/2$. Since all criteria stated in Theorem~\ref{thm:main_3} remain true in Theorem~\ref{thm:main_4}, $S_{n}/n\rightarrow (2x^{*}-1)$ almost surely. This, along with the uniform convergence deduced in \eqref{uniform_convergence}, the continuity of $\hat{g}^{2}$ (ensured by our assumption that $f\in\mathcal{C}^{2}[0,1]$), the fact that that $\hat{g}(x)\in[-1,1]$ for all $x\in[0,1]$ (since $0\leqslant g(x)\leqslant 1$ for all $x\in[0,1]$), and the uniform bounds on $\{H_{n}\}$ obtained in \eqref{H_{n}_bounded}, we conclude that
\begin{equation}
\lim_{n\rightarrow\infty}H_{n}^{2}\left(\frac{S_{n}}{n}\right)\rightarrow\hat{g}^{2}(x^{*}) \text{ as } n \rightarrow \infty, \text{ almost surely}.\label{H_{n}^{2}(S_{n}/n)_converges}
\end{equation}
From \eqref{beta_{n}_growth_rate}, we have
\begin{align}
{}&\lim_{n\rightarrow \infty}(\ln n)^{-1}\sum_{k=2}^{n}\frac{\gamma_{k-1}^{2}}{\beta_{k-1}^{2}}=c^{-2} \text{ when } \tau=1/2, \quad \lim_{n\rightarrow \infty}n^{1-2\tau}\sum_{k=2}^{n}\frac{\gamma_{k-1}^{2}}{\beta_{k-1}^{2}}=c^{-2}(2\tau-1)^{-1} \text{ when } \tau>1/2.\label{gamma_beta_ratio_series_tau_geq_1/2}
\end{align}
It is evident that the series $\sum_{n}\gamma_{k-1}^{2}/\beta_{k-1}^{2}$ diverges when $\tau\geqslant 1/2$. This, along with \eqref{increasing_process_growing_sample_size_weak_convergence}, \eqref{H_{n}^{2}(S_{n}/n)_converges} and Lemma 2.2.13.\ of \cite{duflo2013random}, tells us that for $\tau\geqslant 1/2$:
\begin{align}
\lim_{n\rightarrow\infty}\left(\sum_{k=2}^{n}\frac{\gamma_{k-1}^{2}}{\beta_{k-1}^{2}}\right)^{-1}I_{n}=\lim_{n\rightarrow\infty}\left(\sum_{k=2}^{n}\frac{\gamma_{k-1}^{2}}{\beta_{k-1}^{2}}\right)^{-1}\sum_{k=2}^{n}\frac{\gamma_{k-1}^{2}}{\beta_{k-1}^{2}}\left\{1-H_{k-1}^{2}\left(\frac{S_{k-1}}{k-1}\right)\right\}=1-\hat{g}^{2}(x^{*}).\label{limit_eq_1}
\end{align}
This verifies a criterion for the martingale central limit theorem (see Corollary 2.1.10.\ of \cite{duflo2013random}, or Theorem 2 of \cite{brown1971martingale}). Next, we verify the Lyapunov condition (see \S 2.1.4.\ of \cite{duflo2013random}) for $\tau=1/2$. A derivation akin to \eqref{D3_computation}, along with \eqref{beta_{n}_growth_rate} and \eqref{gamma_beta_ratio_series_tau_geq_1/2}, leads, for $\tau=1/2$, to 
\begin{align}
\left(\sum_{k=2}^{n}\frac{\gamma_{k-1}^{2}}{\beta_{k-1}^{2}}\right)^{-1}\sum_{k=2}^{n}\E[\left|M_{k}-M_{k-1}\right|^{4}\big|\mathcal{F}_{k-1}]
=O\left[\left(\sum_{k=2}^{n}\frac{\gamma_{k-1}^{2}}{\beta_{k-1}^{2}}\right)^{-1}\sum_{k=2}^{n}\left(\frac{\gamma_{k-1}}{\beta_{k-1}}\right)^{4}\right]=O\left[\frac{1}{\ln n}\sum_{k=2}^{n}\frac{1}{k^{2}}\right],\label{Lyapunov_cond}
\end{align}
which converges to $0$ as $n\rightarrow\infty$. We verify the Lindeberg condition (see assumption A2.\ of Corollary 2.1.10.\ of \cite{duflo2013random}, or Equation 2 of \cite{brown1971martingale}) when $\tau> 1/2$. Fixing any $\epsilon>0$, adopting a derivation similar to that of \eqref{Lyapunov_cond}, and using \eqref{beta_{n}_growth_rate} and \eqref{gamma_beta_ratio_series_tau_geq_1/2} , we have
\begin{align}
{}&\left(\sum_{k=2}^{n}\frac{\gamma_{k-1}^{2}}{\beta_{k-1}^{2}}\right)^{-1}\sum_{k=2}^{n}\E\left[\frac{\gamma_{k-1}^{2}}{\beta_{k-1}^{2}}Z_{k}^{2}\chi\left\{\left|\frac{\gamma_{k-1}Z_{k}}{\beta_{k-1}}\right|\geqslant\epsilon\sqrt{\sum_{k=2}^{n}\frac{\gamma_{k-1}^{2}}{\beta_{k-1}^{2}}}\right\}\Bigg|\mathcal{F}_{k-1}\right]\nonumber\\
={}&\epsilon^{-2}\left(\sum_{k=2}^{n}\frac{\gamma_{k-1}^{2}}{\beta_{k-1}^{2}}\right)^{-2}\sum_{k=2}^{n}\E\left[\left\{\epsilon\sqrt{\left(\sum_{k=2}^{n}\frac{\gamma_{k-1}^{2}}{\beta_{k-1}^{2}}\right)}\right\}^{2}\frac{\gamma_{k-1}^{2}}{\beta_{k-1}^{2}}Z_{k}^{2}\chi\left\{\left|\frac{\gamma_{k-1}Z_{k}}{\beta_{k-1}}\right|\geqslant\epsilon\sqrt{\sum_{k=2}^{n}\frac{\gamma_{k-1}^{2}}{\beta_{k-1}^{2}}}\right\}\Bigg|\mathcal{F}_{k-1}\right]\nonumber\\
\leqslant{}&\epsilon^{-2}\left(\sum_{k=2}^{n}\frac{\gamma_{k-1}^{2}}{\beta_{k-1}^{2}}\right)^{-2}\sum_{k=2}^{n}\E\left[\left|\frac{\gamma_{k-1}Z_{k}}{\beta_{k-1}}\right|^{4}\right]
=\Theta\left(n^{2-4\tau}\right)\left[\Theta(1)+\Theta\left(n^{4\tau-3}\right)\right]=o(1) \text{ when } \tau>1/2.\label{Lindeberg_cond}
\end{align}
Combining \eqref{limit_eq_1}, \eqref{Lyapunov_cond} and \eqref{Lindeberg_cond}, as well as \eqref{gamma_beta_ratio_series_tau_geq_1/2}, we deduce, via Corollary 2.1.10.\ of \cite{duflo2013random}, that
\begin{align}
{}&(\ln n)^{-1/2}M_{n} \xlongrightarrow{D} N\left(0,c^{-2}\left\{1-\hat{g}^{2}(x^{*})\right\}\right) \quad \text{when } \tau=1/2,\label{M_{n}_normality_tau=1/2}\\
{}&n^{1/2-\tau}M_{n} \xlongrightarrow{D} N\left(0,c^{-2}(2\tau-1)^{-1}\left\{1-\hat{g}^{2}(x^{*})\right\}\right) \quad \text{when } \tau>1/2.\label{M_{n}_normality_tau>1/2}
\end{align}

The final task is to prove that 
\begin{enumerate*}
\item $R_{n}$ converges almost surely when $\tau< 1/2$,
\item $(\ln n)^{-1/2}R_{n}$ converges in probability to $0$ when $\tau=1/2$,
\item and $n^{1/2-\tau}R_{n}$ converges in probability to $0$ when $\tau>1/2$,
\end{enumerate*}
where $\{R_{n}\}$ is as defined in \eqref{M_{n},R_{n}_defns}. From \eqref{hat_h_error_defns}, \eqref{uniform_convergence}, \eqref{h_{n}_defn} and the definition of $\hat{g}$ in the proof of Theorem~\ref{thm:main_3}, we know that $h_{n}\rightarrow\hat{h}$ uniformly on $[-1,1]$. From \eqref{M_{n},R_{n}_defns}, \eqref{many_definitions}, \eqref{h_{n}_defn} and \eqref{hat_h_error_defns}, we have
\begin{align}
|R_{n}|
\leqslant\sum_{i=1}^{n}\frac{\gamma_{i}}{\beta_{i}}\left|H_{i}\left(\frac{S_{i}}{i}\right)-\hat{g}\left(\frac{1}{2}\left(1+\frac{S_{i}}{i}\right)\right)\right|+\sum_{i=1}^{n}\frac{\gamma_{i}}{\beta_{i}}\left|\hat{h}\left(\frac{S_{i}}{i}\right)+\tau\left\{\frac{S_{i}}{i}-(2x^{*}-1)\right\}\right|.\label{R_{n}_split}
\end{align}
Since we have assumed $f\in\mathcal{C}^{2}[0,1]$, we may apply \eqref{Bernstein_convergence_rate_twice_differentiable}, and using \eqref{beta_{n}_growth_rate}, we obtain
\begin{align}
{}&(n+1)^{\tau}\beta_{n}\sum_{i=1}^{n}\frac{\gamma_{i}}{\beta_{i}}\left|H_{i}\left(\frac{S_{i}}{i}\right)-\hat{g}\left(\frac{1}{2}\left(1+\frac{S_{i}}{i}\right)\right)\right|=\Theta\left((n+1)^{\tau}\beta_{n}\right)\Theta\left(\sum_{i=1}^{n}(i+1)^{\tau-1}k(i)^{-1}\right),\nonumber
\end{align}
and for $\tau<1/2$, this converges because of \eqref{beta_{n}_growth_rate} and the first criterion in \eqref{series_convergence_criterion_5}. Likewise, for $\tau=1/2$, 
\begin{equation}
\sqrt{\frac{(n+1)}{\ln(n+1)}}\beta_{n}\sum_{i=1}^{n}\frac{\gamma_{i}}{\beta_{i}}\left|H_{i}\left(\frac{S_{i}}{i}\right)-\hat{g}\left(\frac{1}{2}\left(1+\frac{S_{i}}{i}\right)\right)\right| = \Theta\left(\sqrt{n+1}\beta_{n}\right)\Theta\left(\frac{1}{\sqrt{\ln(n+1)}}\sum_{i=1}^{n}\frac{k(i)^{-1}}{\sqrt{i+1}}\right),\nonumber
\end{equation}
and this converges to $0$ by \eqref{beta_{n}_growth_rate} and the first criterion of \eqref{series_convergence_criterion_6} when $\tau=1/2$. Finally, we have
\begin{equation}
\sqrt{n+1}\beta_{n}\sum_{i=1}^{n}\frac{\gamma_{i}}{\beta_{i}}\left|H_{i}\left(\frac{S_{i}}{i}\right)-\hat{g}\left(\frac{1}{2}\left(1+\frac{S_{i}}{i}\right)\right)\right|=\Theta\left((n+1)^{\tau}\beta_{n}\right)\Theta\left((n+1)^{1/2-\tau}\sum_{i=1}^{n}(i+1)^{\tau-1}k(i)^{-1}\right),\nonumber
\end{equation}
and this converges to $0$ by \eqref{beta_{n}_growth_rate} and the first criterion of \eqref{series_convergence_criterion_7} when $\tau>1/2$.

\sloppy We analyze the second series of \eqref{R_{n}_split}. Using Taylor expansion of $\hat{h}(x)$, for $x\in[-1,1]$, around $(2x^{*}-1)$, and noting, from \eqref{hat_h_error_defns}, that
\begin{enumerate*}
\item $\hat{h}\in\mathcal{C}^{2}[-1,1]$ since $f\in\mathcal{C}^{2}[0,1]$,
\item $\hat{h}(2x^{*}-1)=0$ since $x^{*}$ is a fixed point of $g$,
\item $\hat{h}'(2x^{*}-1)=-\tau$, with $\tau$ as defined in Theorem~\ref{thm:main_4},
\end{enumerate*}
we obtain, for $\xi$ between $(2x^{*}-1)$ and $x$,
\begin{align}
{}&\left|\hat{h}(x)+\tau\{x-(2x^{*}-1)\}\right|\leqslant \frac{1}{2}\left|\hat{h}''(\xi)\right|\{x-(2x^{*}-1)\}^{2}\leqslant M\{x-(2x^{*}-1)\}^{2},\nonumber
\end{align}
with $M=\sup\{|\hat{h}''(\zeta)|:\zeta\in[-1,1]\}<\infty$ as $[-1,1]$ is compact and $\hat{h}''$ is continuous. This allows us to write
\begin{align}
\sum_{i=1}^{n}\frac{\gamma_{i}}{\beta_{i}}\left|\hat{h}\left(\frac{S_{i}}{i}\right)+\tau\left\{\frac{S_{i}}{i}-(2x^{*}-1)\right\}\right|\leqslant M\sum_{i=1}^{n}\frac{\gamma_{i}}{\beta_{i}}\left\{\frac{S_{i}}{i}-(2x^{*}-1)\right\}^{2}.\label{R_{n}_second_series_bound}
\end{align}
By \eqref{sa_3}, \eqref{h_{n}_defn}, \eqref{H_{n}_bounded} (ensuring $-2\leqslant h_{n}(x)\leqslant 2$ for $x\in[-1,1]$ and $\E[\{X_{n+1}-H_{n}(S_{n}/n)\}^{2}|\mathcal{F}_{n}]\leqslant 1$):
\begin{align}
{}&\E\left[\left\{\frac{S_{n+1}}{n+1}-(2x^{*}-1)\right\}^{2}\Bigg|\mathcal{F}_{n}\right]
\leqslant\left[\frac{S_{n}}{n}-(2x^{*}-1)\right]^{2}+\frac{5}{(n+1)^{2}}+\frac{2}{n+1}\left[\frac{S_{n}}{n}-(2x^{*}-1)\right]h_{n}\left(\frac{S_{n}}{n}\right).\label{conditional_second_moment_bound_growing_sample_size_with_replacement}
\end{align}
To bound the third term of \eqref{conditional_second_moment_bound_growing_sample_size_with_replacement}, note that, given $0<\eta<\tau$, there exists $\epsilon>0$ such that $|x-(2x^{*}-1)|\leqslant\epsilon \implies \left|\hat{h}'(x)+\tau\right|\leqslant\eta$, so that for $x\in[2x^{*}-1,2x^{*}-1+\epsilon]$:
\begin{align}
{}& (-\tau-\eta)\{x-(2x^{*}-1)\}^{2}\leqslant \{x-(2x^{*}-1)\}\hat{h}(x) \leqslant (-\tau+\eta)\{x-(2x^{*}-1)\}^{2}.\nonumber
\end{align}
A similar bound holds for $x\in[2x^{*}-1-\epsilon,2x^{*}-1]$. These bounds, along with \eqref{Bernstein_convergence_rate_twice_differentiable}, yields
\begin{align}
{}&\{x-(2x^{*}-1)\}h_{n}(x)
\leqslant \frac{M'}{k(n)}+(-\tau+\eta)\{x-(2x^{*}-1)\}^{2} \quad \text{for } x\in[2x^{*}-1-\epsilon,2x^{*}-1+\epsilon],\label{x h_{n}(x) bound}
\end{align} 
\sloppy where $M'=(2p-1)||f''||_{\infty}/4$. Setting $T=\inf\left\{n\geqslant j: \left|S_{n}/n\right|>\epsilon\right\}$ for any fixed $j\in\mathbb{N}$ and $\epsilon>0$, setting $u_{n}=\E\left[\left\{S_{n}/n-(2x^{*}-1)\right\}^{2}\chi\{T>n\}\right]$ for $n>j$, and using \eqref{conditional_second_moment_bound_growing_sample_size_with_replacement} and \eqref{x h_{n}(x) bound}, we obtain:
\begin{align}
u_{n+1}
\leqslant{}&\left\{1-2(n+1)^{-1}(\tau-\eta)\right\}u_{n}+5(n+1)^{-2}+2M'(n+1)^{-1}k(n)^{-1}.\label{u_{n}_recurrence_bound}
\end{align}
Setting $\kappa=(\tau-\eta)$, $v_{n}=\{1-2\kappa(n+1)^{-1}\}$ and $w_{n}=v_{j}v_{j+1}\ldots v_{n}$ for $n\geqslant j$, and iterating \eqref{u_{n}_recurrence_bound}:
\begin{align}
u_{n+1}\leqslant w_{n}u_{j}+5w_{n}\sum_{i=j}^{n}w_{i}^{-1}(i+1)^{-2}+2M'w_{n}\sum_{i=j}^{n}w_{i}^{-1}k(i)^{-1}(i+1)^{-1}.\label{u_{n}_final_recurrence}
\end{align}
The first two terms on the right side of \eqref{u_{n}_final_recurrence} can be dealt with the same way as the corresponding terms in the proof of Theorem 2.2.12.\ of \cite{duflo2013random}. Similar to \eqref{beta_{n}_growth_rate}, we have $\lim_{n\rightarrow\infty}w_{n}(n+1)^{2\kappa}=c''\in(0,\infty)$ as $n \rightarrow \infty$, which, along with \eqref{beta_{n}_growth_rate}, yields, given any $\epsilon>0$, an $N\in\mathbb{N}$ such that $(c-\epsilon)(n+1)^{-\tau} \leqslant \beta_{n} \leqslant (c+\epsilon)(n+1)^{-\tau}$ and $(c''-\epsilon)(n+1)^{-2\kappa}\leqslant w_{n} \leqslant (c''+\epsilon)(n+1)^{-2\kappa}$ for $n\geqslant N$. Thus, for $n\geqslant N$:
\begin{align}
{}&\sum_{i=1}^{n}\frac{\gamma_{i}w_{i}}{\beta_{i}}\sum_{t=j}^{i-1}w_{t}^{-1}k(t)^{-1}(t+1)^{-1}
=O(1)+O\left[\sum_{i=N+1}^{n}(i+1)^{\tau-2\kappa-1}\left\{O(1)+O\left(\sum_{t=N}^{i-1}k(t)^{-1}(t+1)^{2\kappa-1}\right)\right\}\right]\nonumber\\
={}&O(1)+O\left[n^{\tau-2\kappa}\right]+O\left[\sum_{i=1}^{n}(i+1)^{\tau-1}\sum_{t=1}^{i-1}(t+1)^{-1}k(t)^{-1}\right].\label{third_term_u_{n}_recurrence_common_bound}
\end{align}
When $\tau>1/2$, choosing $\kappa>1/2$ ensures, together with the second criterion in \eqref{series_convergence_criterion_7} that the expression in \eqref{third_term_u_{n}_recurrence_common_bound}, when multiplied by $n^{1/2-\tau}$, is $o(1)$ as $n\rightarrow\infty$. When $\tau=1/2$, choosing $\kappa>\tau/2$, together with the second criterion in \eqref{series_convergence_criterion_6}, ensures that the expression in \eqref{third_term_u_{n}_recurrence_common_bound}, when multiplied by $(\ln n)^{-1/2}$, converges to $0$ as $n\rightarrow\infty$. For $\tau<1/2$, choosing $\kappa>\tau/2$, together with the second criterion in \eqref{series_convergence_criterion_5}, ensures that \eqref{third_term_u_{n}_recurrence_common_bound} converges. This completes the proof of Theorem~\ref{thm:main_4}.
\end{proof}

\begin{proof}[Proof of Corollary~\ref{cor:main_1}]
Since $k(n)=\Theta(n^{\alpha})$ with $\alpha>0$, hence $\sum_{i=1}^{n}(i+1)^{-1}k(i)^{-1}=\Theta\left(\sum_{i=1}^{n}i^{-1-\alpha}\right)$ converges, validating \eqref{D4}. For $\tau\leqslant 1/2$, we have
\begin{align}
{}&\sum_{i=1}^{n}(i+1)^{\tau-1}k(i)^{-1}=\Theta\left(\sum_{i=1}^{n}i^{\tau-1-\alpha}\right) \quad \text{and} \quad \sum_{i=1}^{n}(i+1)^{\tau-1}\sum_{t=1}^{i-1}(t+1)^{-1}k(t)^{-1}=\Theta\left(\sum_{i=1}^{n}i^{\tau-1-\alpha}\right),\nonumber
\end{align}
each of which converges since $\alpha>\tau$, ensuring \eqref{series_convergence_criterion_5} for $\tau<1/2$ and \eqref{series_convergence_criterion_6} for $\tau=1/2$. For $\tau>1/2$,
\begin{align}
{}&n^{1/2-\tau}\sum_{i=1}^{n}(i+1)^{\tau-1}k(i)^{-1}\Theta\left(n^{1/2-\alpha}\right) \quad \text{and} \quad n^{1/2-\tau}\sum_{i=1}^{n}(i+1)^{\tau-1}\sum_{t=1}^{i-1}(t+1)^{-1}k(t)^{-1}=\Theta\left(n^{1/2-\alpha}\right),\nonumber
\end{align}
each of which converges to $0$ since $\alpha>1/2$, ensuring \eqref{series_convergence_criterion_7} for $\kappa>1/2$. This completes the proof.
\end{proof}

\subsection{Proofs of the results from \S\ref{subsec:without_results}}\label{subsec:proofs_without}
We consider the model outlined in \S\ref{subsec:without}, where the sample from the past is drawn without replacement. We begin by observing, from \eqref{C_{n}^{+}}, that 
\begin{equation}
C_{n}^{+}\leqslant \min\left\{k,\sum_{i=1}^{n}\chi\{X_{i}=1\}\right\}=\min\left\{k,\sum_{i=1}^{n}\left(\frac{X_{i}+1}{2}\right)\right\}=\min\left\{k,\frac{S_{n}+n}{2}\right\}.\nonumber
\end{equation}
Likewise, the number of time-stamps sampled from $[n]$, during each of which the walker had taken a step of size $-1$, is bounded above by $\min\left\{k,(n-S_{n})/2\right\}$. Given $\mathcal{F}_{n}$, for the event $\left\{C_{n}^{+}=i\right\}$ to occur, where
\begin{equation}
i\leqslant \min\left\{k,\frac{S_{n}+n}{2}\right\} \quad \text{and} \quad k-i \leqslant \min\left\{k,\frac{n-S_{n}}{2}\right\},\label{without_replacement_plus_1_count_bounds}
\end{equation}
\begin{enumerate}
\item \label{step_1} we choose $i$ time-stamps from the set $\left\{j \in [n]: X_{j}=+1\right\}$ in ${(S_{n}+n)/2 \choose i}$ ways,
\item \label{step_2} we choose $(k-i)$ time-stamps from the set $\left\{j\in[n]: X_{j}=-1\right\}$ in ${(n-S_{n})/2\choose k-i}$ ways,
\item \label{step_3} we choose the $i$ indices, out of the $k$ indices of our sample, during which time-stamps from the set $\left\{j \in [n]: X_{j}=+1\right\}$ were chosen, in ${k \choose i}$ ways,
\item we assign the $i$ time-stamps chosen during \eqref{step_1} to the $i$ indices chosen during \eqref{step_3} in $i!$ ways, 
\item we assign the $(k-i)$ time-stamps chosen during \eqref{step_2} to the $(k-i)$ \emph{remaining} indices in our sample (after step \eqref{step_3}) in $(k-i)!$ ways.
\end{enumerate}
These observations, along with \eqref{without_replacement_distribution}, yields, for each $i\in\mathbb{N}_{0}$ satisfying \eqref{without_replacement_plus_1_count_bounds}:
\begin{equation}
\Prob\left[C_{n}^{+}=i\big|\mathcal{F}_{n}\right]={k \choose i}{(n+S_{n})/2 \choose i}{(n-S_{n})/2 \choose k-i}i!(k-i)!\left\{{n\choose k}k!\right\}^{-1}={(n+S_{n})/2 \choose i}{(n-S_{n})/2 \choose k-i}{n\choose k}^{-1}.\nonumber
\end{equation}
If we define, using the definition of generalized binomial coefficients, the function
\begin{equation}
F_{n}(x)=2{n\choose k}^{-1}\sum_{i=0}^{k}g\left(\frac{i}{k}\right){n(1+x)/2 \choose i}{n(1-x)/2 \choose k-i}-1 \quad \text{for } x\in[-1,1]\label{F_{n}_defn}
\end{equation} 
(note that $F_{n}$ can be defined for $k=k(n)$ as well), then we derive, the same way as we deduced \eqref{cond_probab_constant_sample_size_with_replacement}, that 
\begin{align}
{}&\E\left[X_{n+1}\big|\mathcal{F}_{n}\right]=F_{n}\left(\frac{S_{n}}{n}\right).\label{X_{n+1}_cond_exp_without_replacement}
\end{align}
The form of $F_{n}$ is dependent on $n$ -- our task, now, is to approximate this sequence of functions suitably by a single function independent of $n$, and estimate the error incurred thereby. To this end, we note that, given any two sequences $\{a_{j}\}_{0\leqslant j \leqslant i-1}$ and $\{b_{j}\}_{0\leqslant j \leqslant i-1}$ of non-negative real numbers, such that $\max\{a_{j},b_{j}\}\leqslant y$ for each $j\in\{0,1,\ldots,i-1\}$, we can write (iteratively, where the first iteration has been shown)
\begin{align}
\left|\prod_{j=0}^{i-1}a_{j}-\prod_{j=0}^{i-1}b_{j}\right|
=\left|a_{0}-b_{0}\right|\prod_{j=1}^{i-1}a_{j}+b_{0}\left|\prod_{j=1}^{i-1}a_{j}-\prod_{j=1}^{i-1}b_{j}\right|\leqslant\left|a_{0}-b_{0}\right|y^{i-1}+y\left|\prod_{j=1}^{i-1}a_{j}-\prod_{j=1}^{i-1}b_{j}\right|\leqslant y^{i-1}\sum_{j=0}^{i-1}\left|a_{j}-b_{j}\right|.\label{difference_of_products}
\end{align}
When the sample size $k$ remains constant, we approximate $F_{n}$ by the function $H$ defined in \eqref{H_defn}. Setting $y=(1+x)/2$ for $x\in(-1,1)$, using \eqref{difference_of_products}, and letting $Z$ follow Binomial$(k,y)$, we obtain:
\begin{align}
{}&\left|F_{n}(x)-H(x)\right|
\leqslant \Bigg\{2\sum_{i=0}^{k}g\left(\frac{i}{k}\right){k \choose i}\left[\prod_{j=0}^{i-1}\left(y-\frac{j}{n}\right)\prod_{j=0}^{k-i-1}\left(1-y-\frac{j}{n}\right)-y^{i}\prod_{j=0}^{i-1}\left(1-\frac{j}{n}\right)\prod_{j=0}^{k-i-1}\left(1-y-\frac{j}{n}\right)\right]\nonumber\\&+2\sum_{i=0}^{k}g\left(\frac{i}{k}\right){k \choose i}\left[y^{i}\prod_{j=0}^{i-1}\left(1-\frac{j}{n}\right)\prod_{j=0}^{k-i-1}\left(1-y-\frac{j}{n}\right)-y^{i}(1-y)^{k-i}\prod_{j=0}^{k-1}\left(1-\frac{j}{n}\right)\right]\Bigg\}\left\{\prod_{j=0}^{k-1}\left(1-\frac{j}{n}\right)\right\}^{-1}\nonumber\\
{}&\leqslant 2\Bigg\{\sum_{i=0}^{k}{k \choose i}(1-y)^{k-i+1}y^{i-1}\left(\sum_{j=0}^{i-1}\frac{j}{n}\right)+\sum_{i=0}^{k}{k \choose i}y^{i}(1-y)^{k-i-1}\left(\sum_{j=0}^{k-i-1}\frac{yj}{n}+\sum_{j=0}^{k-i-1}\frac{(1-y)i}{n}\right)\Bigg\}\left\{\prod_{j=0}^{k-1}\left(1-\frac{j}{n}\right)\right\}^{-1}\nonumber\\
{}&=2\left\{\prod_{j=0}^{k-1}\left(1-\frac{j}{n}\right)\right\}^{-1}\E\left[\frac{1-y}{y}\frac{Z^{2}}{2n}-\frac{1-y}{y}\frac{Z}{2n}+\frac{y}{1-y}\frac{(k-Z-1)(k-Z)}{2n}+\frac{Z(k-Z)}{n}\right]\nonumber\\
{}&=4\left\{\prod_{j=0}^{k-1}\left(1-\frac{j}{n}\right)\right\}^{-1}\frac{y(1-y)k(k-1)}{n}=O\left(\frac{1}{n}\right) \quad \text{for all } x\in(-1,1).\label{without_replacement_function_approximation_error}
\end{align}
Note that we have $\left|F_{n}(x)-H(x)\right|=0$ at each of $x=-1$ (corresponds to $y=0$) and $x=+1$ (corresponds to $y=1$). To approximate $F_{n}$ when $k(n)$ grows with $n$ and $f\in\mathcal{C}^{2}[0,1]$, we use (\cite{gonska2006peano}, [Theorem 2, \cite{tachev2008voronovskaja}], [Theorem 2, \cite{gonska2008remarks}]):
\begin{theorem}\label{thm:approx_F_{n}_k(n)_grows}
If $L:\mathcal{C}^{0}[0,1]\rightarrow\mathcal{C}^{0}[0,1]$ is a positive linear operator, then for any $f\in\mathcal{C}^{2}[0,1]$ and $x\in[0,1]$:
\begin{equation}
\left|L(f;x)-f(x)-\frac{1}{2}f''(x)L\left((e_{1}-x)^{2};x\right)\right|\leqslant \frac{1}{2}L\left((e_{1}-x)^{2};x\right)\tilde{\omega}\left(f'', \frac{1}{3}\sqrt{\frac{L\left((e_{1}-x)^{4};x\right)}{L\left((e_{1}-x)^{2};x\right)}}\right),\label{general_linear_operator}
\end{equation}
where $e_{n}(x)=x^{n}$ for each $x\in[0,1]$ and each $n\in\mathbb{N}$, $\omega(f,\cdot)$ is the modulus of continuity of the function $f$, and $\tilde{\omega}(f,\cdot)$ denotes the least concave majorant of $\omega(f,\cdot)$ given by 
\begin{equation}
\tilde{\omega}(f,\epsilon)=\sup_{0\leqslant x \leqslant \epsilon \leqslant y \leqslant 1,x\neq y}\frac{(\epsilon-x)\omega(f,y)+(y-\epsilon)\omega(f,x)}{y-x} \text{ for } 0\leqslant \epsilon \leqslant 1.\label{least_concave_majorant}
\end{equation}
\end{theorem}
\noindent We consider a sequence of linear operators, $\{L_{n}\}$, where, for any $n\in\mathbb{N}$, any $f\in\mathcal{C}^{2}[0,1]$ and $y\in[0,1]$, 
\begin{equation}
L_{n}(f;y)=\sum_{i=0}^{k(n)}f\left(\frac{i}{k(n)}\right){ny \choose i}{n(1-y) \choose k(n)-i}{n \choose k(n)}^{-1}.\label{L_{n}_defn}
\end{equation}
By Newton's binomial theorem, we have, for $y\in(0,1)$ and $n\in \mathbb{N}$ (the identity in \eqref{0th_moment} generalizes the fact that the probability mass function for a hypergeometric distribution must sum to $1$): 
\begin{equation}
\sum_{i=0}^{k(n)}{ny \choose i}{n(1-y) \choose k(n)-i}{n \choose k(n)}^{-1}=1.\label{0th_moment}
\end{equation}
Using \eqref{0th_moment} with $k(n)=k$, and $0\leqslant g(\cdot)\leqslant 1$, in \eqref{F_{n}_defn}, we get $-1\leqslant F_{n}(x)\leqslant 1$ for $x\in[-1,1]$. By \eqref{0th_moment}:
\begin{align}
{}&\sum_{i=0}^{k(n)}\frac{i}{k(n)}{ny \choose i}{n(1-y) \choose k(n)-i}{n \choose k(n)}^{-1}
=y\sum_{i=1}^{k(n)}{ny-1 \choose i-1}{n(1-y) \choose k(n)-i}{n-1 \choose k(n)-1}^{-1}=y.\label{1st_moment}
\end{align}
Next, using \eqref{0th_moment} and \eqref{1st_moment}, we deduce that 
\begin{align}
{}&\sum_{i=0}^{k(n)}\left(\frac{i}{k(n)}\right)^{2}{ny \choose i}{n(1-y) \choose k(n)-i}{n \choose k(n)}^{-1}
=\frac{y(ny-1)}{k(n)}\sum_{i=2}^{k(n)}{ny-2 \choose i-2}{n(1-y) \choose k(n)-i}{n-1 \choose k(n)-1}^{-1}\nonumber\\&+y\sum_{i=1}^{k(n)}\frac{1}{k(n)}{ny-1 \choose i-1}{n(1-y) \choose k(n)-i}{n-1 \choose k(n)-1}^{-1}
=\frac{y\{nyk(n)+n(1-y)-k(n)\}}{k(n)(n-1)}.\label{2nd_moment}
\end{align}
We are now in a position to compute the following, using \eqref{0th_moment}, \eqref{1st_moment} and \eqref{2nd_moment}:
\begin{align}
L_{n}\left((e_{1}-y)^{2};y\right)={}&\sum_{i=0}^{k(n)}\left(\frac{i}{k(n)}-y\right)^{2}{ny \choose i}{n(1-y) \choose k(n)-i}{n \choose k(n)}^{-1}
=\frac{y(1-y)\{n-k(n)\}}{k(n)(n-1)}.\label{2nd_moment_centered}
\end{align}
It suffices for us to use the crude bound $\tilde{\omega}(f'',\epsilon)\leqslant 2||f''||_{\infty}$ for any function $f\in\mathcal{C}^{2}[0,1]$ and any $0\leqslant \epsilon \leqslant 1$. Implementing this bound, \eqref{L_{n}_defn} and \eqref{2nd_moment_centered} in \eqref{general_linear_operator}, we obtain:
\begin{align}
{}&\left|\sum_{i=0}^{k(n)}f\left(\frac{i}{k(n)}\right){ny \choose i}{n(1-y) \choose k(n)-i}{n \choose k(n)}^{-1}-f(y)\right|\leqslant \frac{3||f''||_{\infty}}{2}\frac{y(1-y)\{n-k(n)\}}{k(n)(n-1)}=O\left(\frac{1}{k(n)}\right).\nonumber
\end{align}
We now come back to our specific set-up, and recalling $F_{n}$ defined in \eqref{F_{n}_defn}, we have, for all $x\in[-1,1]$:
\begin{align}
\left|F_{n}(x)-\hat{g}\left(\frac{x+1}{2}\right)\right|={}&\left|L_{n}\left(2g(\cdot)-1;\frac{1+x}{2}\right)-\left\{2g\left(\frac{x+1}{2}\right)-1\right\}\right|
=O\left(\frac{1}{k(n)}\right).\label{F_{n}_approximation_error}
\end{align}

When $k$ remains constant, the model in \S\ref{subsec:without} yields the following stochastic approximation process:
\begin{align}
\frac{S_{n+1}}{n+1}={}&\frac{S_{n}}{n}+\gamma_{n}\left\{\epsilon_{n+1}+\delta_{n}+h\left(\frac{S_{n}}{n}\right)\right\} \quad \text{for } n\geqslant k,\label{sa_4}
\end{align}
where $h$ is as defined in \eqref{h_error_defns}, and we set
\begin{align}
\gamma_{n}=\frac{1}{n+1}, \quad \epsilon_{n+1}=X_{n+1}-F_{n}\left(\frac{S_{n}}{n}\right) \quad \text{and} \quad \delta_{n}=F_{n}\left(\frac{S_{n}}{n}\right)-H\left(\frac{S_{n}}{n}\right).\label{epsilon_delta_without_replacement_constant_size}
\end{align}
When $k=k(n)$ grows with $n$, we have, instead of \eqref{sa_4}, for each $n\in\mathbb{N}$:
\begin{align}
\frac{S_{n+1}}{n+1}={}&\frac{S_{n}}{n}+\gamma_{n}\left\{\epsilon_{n+1}+\delta_{n,1}+\delta_{n,2}+\hat{h}\left(\frac{S_{n}}{n}\right)\right\},\label{sa_5}
\end{align}
where $\gamma_{n}$ and $\epsilon_{n+1}$ are as defined in \eqref{epsilon_delta_without_replacement_constant_size}, $\hat{h}$ is as defined in \eqref{hat_h_error_defns}, and with $H_{n}$ as defined in \eqref{H_{n}_defn} and $\hat{g}(\cdot)=2g(\cdot)-1$ as defined in the proof of Theorem~\ref{thm:main_3}, we set
\begin{equation}
\delta_{n,1}=F_{n}\left(\frac{S_{n}}{n}\right)-H_{n}\left(\frac{S_{n}}{n}\right) \quad \text{and} \quad \delta_{n,2}=H_{n}\left(\frac{S_{n}}{n}\right)-\hat{g}\left(\frac{1}{2}\left(1+\frac{S_{n}}{n}\right)\right).\label{delta_{n,1},delta_{n,2}}
\end{equation}

\begin{proof}[Proof of Theorem~\ref{thm:main_5}]
Here, we refer to the stochastic approximation in \eqref{sa_4}. The sequence $\{\gamma_{n}\}$ satisfies the requirements stated in Theorem~\ref{thm:gen_stoch_approx_a.s.}, and, as argued in the paragraph preceding Lemma~\ref{lem:if_fixed_point_unique}, as well as Lemma~\ref{lem:if_fixed_point_unique} itself, the function $h$ satisfies \eqref{eq:uniform_bound_cond} and \eqref{eq:attractor_cond} with $\alpha=0$ (the latter is true since $H$ has a unique fixed point in $[-1,1]$ and $p\in(0,1)$). In particular, any of \eqref{strict_decreasing}, \eqref{strict_convex} and \eqref{contraction} ensures that $H$ has a unique fixed point in $[-1,1]$. We, therefore, need only verify that $\sum_{n}\gamma_{n}\left(\epsilon_{n+1}+\delta_{n}\right)$ converges almost surely.   

The almost sure convergence of $\sum_{n}\gamma_{n}\epsilon_{n+1}$, with $\epsilon_{n+1}$ as defined in \eqref{epsilon_delta_without_replacement_constant_size}, is established the same way as in Lemma~\ref{lem:series_gamma_{n}_epsilon_{n}_converges}, via an application of Theorem 4.5.2.\ of \cite{durrett2019probability}. Next, from \eqref{epsilon_delta_without_replacement_constant_size} and \eqref{without_replacement_function_approximation_error}, we have:
\begin{align}
\sum_{i=k}^{n}\left|\gamma_{i}\delta_{i}\right|= \sum_{i=k}^{n}\frac{1}{i+1}\left|F_{i}\left(\frac{S_{i}}{i}\right)-H\left(\frac{S_{i}}{i}\right)\right|=O\left(\sum_{i=k}^{n}i^{-2}\right),\nonumber
\end{align}
thus proving that $\sum_{n}\gamma_{n}\delta_{n}$ converges almost surely. This concludes the proof of Theorem~\ref{thm:main_5}.
\end{proof}

\begin{proof}[Proof of Theorem~\ref{thm:main_6}] 
We now refer to the stochastic approximation in \eqref{sa_5}. Note that 
\begin{enumerate*}
\item the sequence $\{\gamma_{n}\}$ satisfies the requirements stated in Theorem~\ref{thm:gen_stoch_approx_a.s.}, 
\item the function $\hat{h}$ satisfies \eqref{eq:uniform_bound_cond}, as shown while proving Theorem~\ref{thm:main_3},  
\item as $g$ has a unique fixed point, $x^{*}$, in $[0,1]$, the function $\hat{h}$ satisfies \eqref{eq:attractor_cond}, with $\alpha=0$, by Lemma~\ref{lem:g_unique_fixed_point_consequence},
\item and the almost sure convergence of $\sum_{n}\gamma_{n}\epsilon_{n+1}$ is established the same way as that of $\sum_{n}\gamma_{n}\hat{\epsilon}_{n+1}$ in Lemma~\ref{lem:series_gamma_{n}_epsilon_{n}_converges}.
\end{enumerate*}
When one of \eqref{D1}, \eqref{D2} and \eqref{D3} holds, $\sum_{n}\gamma_{n}\delta_{n,2}$ converges by Lemma~\ref{lem:series_gamma_{n}_epsilon_{n}_converges}. Using the inequality $1-x/2\geqslant e^{-x}$ for $x\in[0,1.59]$ and noting that \eqref{series_convergence_criterion_8} ensures $k(n)=o(n)$, which, in turn, implies $2j/n<1.59$ for $j\in\{0,1,\ldots,k(n)-1\}$ for all $n$ large enough, we obtain
\begin{equation}
\left\{\prod_{j=0}^{k(n)-1}\left(1-\frac{j}{n}\right)\right\}^{-1}\leqslant \exp\left\{2\sum_{j=0}^{k(n)-1}\frac{j}{n}\right\}\leqslant \exp\left\{\frac{k(n)^{2}}{n}\right\}.\nonumber
\end{equation}
Using this observation, and emulating the derivation of \eqref{without_replacement_function_approximation_error} with $k$ replaced by $k(n)$ and $H$ by $H_{n}$, we get
\begin{align}
{}&\left|F_{n}(x)-H_{n}(x)\right|\leqslant 4\exp\left\{\frac{2k(n)^{2}}{n}\right\}\frac{y(1-y)k(n)\{k(n)-1\}}{n}=O\left(\exp\left\{\frac{k(n)^{2}}{n}\right\}\frac{k(n)^{2}}{n}\right),\nonumber
\end{align}
so that $\sum_{n}\gamma_{n}\delta_{n,1}$ converges by \eqref{series_convergence_criterion_8}. When \eqref{D4} holds, we have $\left|\delta_{n,1}+\delta_{n,2}\right|=O\left(k(n)^{-1}\right)$ by \eqref{F_{n}_approximation_error}, so that \eqref{series_convergence_criterion_4} ensures that $\sum_{n}\gamma_{n}\left(\delta_{n,1}+\delta_{n,2}\right)$ converges. By Theorem~\ref{thm:gen_stoch_approx_a.s.}, the proof of Theorem~\ref{thm:main_6} is complete.
\end{proof}

\begin{proof}[Proof of Theorem~\ref{thm:main_7}]
We follow the proofs of Theorem~\ref{thm:main_4} and Theorem 2.2.12.\ of \cite{duflo2013random}, with $k$ remaining constant. Here, the stochastic approximation representation of our process is
\begin{equation}
\left(\frac{S_{n+1}}{n+1}-x^{*}\right)=\left(\frac{S_{n}}{n}-x^{*}\right)+\frac{1}{n+1}Y_{n+1},\quad \text{with }Y_{n+1}\text{ as defined in \eqref{sa_3}.}\nonumber
\end{equation}
We set $f_{n}(x)=F_{n}(x)-x$ for $x\in[-1,1]$, with $F_{n}$ as defined in \eqref{F_{n}_defn}. For all $n\geqslant k$, we set
\begin{align}
Z_{n+1}=Y_{n+1}-f_{n}\left(\frac{S_{n}}{n}\right) \quad \text{and} \quad \theta_{n}=f_{n}\left(\frac{S_{n}}{n}\right)+\tau\left(\frac{S_{n}}{n}-x^{*}\right),\label{many_definitions_1}
\end{align}
and we define $\alpha_{n}$ and $\beta_{n}$ same as in \eqref{many_definitions}. Similar to \eqref{iterated_recurrence}, we can write, for $n\geqslant k$:
\begin{align}
\left(\frac{S_{n+1}}{n+1}-x^{*}\right)=\beta_{n}\left(\frac{S_{k}}{k}-x^{*}\right)+\beta_{n}M_{n+1}+\beta_{n}R_{n}, \quad\text{with}\quad M_{n+1}=\sum_{i=k}^{n}\frac{\gamma_{i}}{\beta_{i}}Z_{i+1} \text{ and } R_{n}=\sum_{i=k}^{n}\frac{\gamma_{i}}{\beta_{i}}\theta_{i}.\label{M_{n},R_{n}_defns_1}
\end{align}
As in the proof of Theorem~\ref{thm:main_4}, we make the following observations:
\begin{enumerate}[label=(S\arabic*), ref=S\arabic*]
\item \label{proof_step_1} The sequence $\{\beta_{n}\}$ satisfies \eqref{beta_{n}_growth_rate}, which, in turn, tells us that \eqref{gamma_beta_ratio_series_tau_geq_1/2} holds for $\tau\geqslant 1/2$, and that $\sum_{i=k}^{n}\gamma_{i}^{2}\beta_{i}^{-2}=O\left(\sum_{i=k}^{n}i^{2\tau-2}\right)$ converges when $\tau<1/2$.
\item The increasing process, $\{I_{n}\}$, associated with the martingale $\{M_{n}\}$, via a deduction similar to \eqref{increasing_process_growing_sample_size_weak_convergence}, equals 
\begin{equation}
I_{n}=\sum_{i=k}^{n}\frac{\gamma_{i}^{2}}{\beta_{i}^{2}}\left\{1-F_{i}^{2}\left(\frac{S_{i}}{i}\right)\right\} \leqslant \sum_{i=k}^{n}\frac{\gamma_{i}^{2}}{\beta_{i}^{2}}=O\left(\sum_{i=k}^{n}i^{2\tau-2}\right).\label{increasing_process_without_replacement}
\end{equation}
This converges for $\tau<1/2$, so that $\{M_{n}\}$ converges almost surely for $\tau<1/2$ by Theorem 4.5.2.\ of \cite{durrett2019probability}. This, along with \eqref{proof_step_1}, implies that $(n+1)^{\tau}\beta_{n}M_{n+1}$ converges almost surely for $\tau<1/2$. 

\item Next, when $k$ remains constant with $n$, we note that \eqref{without_replacement_function_approximation_error} implies the uniform convergence of $F_{n}$ to $H$. This, along with an argument similar to that for deducing \eqref{H_{n}^{2}(S_{n}/n)_converges}, and the fact that $S_{n}/n$ converges almost surely to $x^{*}$, yields $1-F_{n}^{2}\left(S_{n}/n\right) \rightarrow 1-H^{2}(x^{*})=1-{x^{*}}^{2}$ almost surely. This, along with \eqref{proof_step_1} and Lemma 2.2.13 of \cite{duflo2013random}, yields, the same way as we deduced \eqref{limit_eq_1}:
\begin{equation}
\lim_{n \rightarrow\infty}\left(\sum_{i=k}^{n}\gamma_{i}^{2}\beta_{i}^{-2}\right)^{-1}I_{n}=1-H^{2}(x^{*})=1-{x^{*}}^{2}\text{ almost surely when }\tau\geqslant 1/2.\nonumber
\end{equation}
That the Lyapunov condition, given by \eqref{Lyapunov_cond}, is true for $\tau=1/2$, and that the Lindeberg condition, given by \eqref{Lindeberg_cond}, is true for $\tau>1/2$, can be verified very similarly in this set-up. 

\item The observations made above yield, similar to \eqref{M_{n}_normality_tau=1/2} and \eqref{M_{n}_normality_tau>1/2}, via Corollary 2.1.10.\ of \cite{duflo2013random}:
\begin{align}
{}&\left(\ln n\right)^{-1/2}M_{n} \xlongrightarrow{D} N\left(0,c^{-2}\left(1-{x^{*}}^{2}\right)\right) \text{ when } \tau=1/2,\nonumber\\
{}& n^{1/2-\tau}M_{n} \xlongrightarrow{D} N\left(0,c^{-2}(2\tau-1)^{-1}\left(1-{x^{*}}^{2}\right)\right) \text{ when } \tau>1/2.\nonumber
\end{align}
\end{enumerate}

We now come to the asymptotics of $\{R_{n}\}$. From \eqref{M_{n},R_{n}_defns_1} and the definitions of $f_{n}$ and $h$, we have
\begin{align}
|R_{n}|\leqslant \sum_{i=k}^{n}\frac{\gamma_{i}}{\beta_{i}}\left|F_{i}\left(\frac{S_{i}}{i}\right)-H\left(\frac{S_{i}}{i}\right)\right|+\sum_{i=k}^{n}\frac{\gamma_{i}}{\beta_{i}}\left|h\left(\frac{S_{i}}{i}\right)+\tau\left(\frac{S_{i}}{i}-x^{*}\right)\right|.\label{R_{n}_split_1}
\end{align}
\sloppy By \eqref{without_replacement_function_approximation_error} and \eqref{beta_{n}_growth_rate}, the first series of \eqref{R_{n}_split_1} is $\sum_{i=k}^{n}\gamma_{i}\beta_{i}^{-1}|F_{i}(S_{i}/i)-H(S_{i}/i)|=O[\sum_{i=k}^{n}\gamma_{i}(i\beta_{i})^{-1}]=O(\sum_{i=k}^{n}i^{\tau-2})$. This observation, along with \eqref{beta_{n}_growth_rate}, yields, almost surely as $n\rightarrow\infty$, 
\begin{align}
{}&(n+1)^{\tau}\beta_{n}\sum_{i=k}^{n}\frac{\gamma_{i}}{\beta_{i}}\left|f_{i}\left(\frac{S_{i}}{i}\right)-h\left(\frac{S_{i}}{i}\right)\right| \text{ converges for } \tau<1/2,\nonumber\\
{}&\sqrt{\frac{n+1}{\ln(n+1)}}\beta_{n}\sum_{i=k}^{n}\frac{\gamma_{i}}{\beta_{i}}\left|f_{i}\left(\frac{S_{i}}{i}\right)-h\left(\frac{S_{i}}{i}\right)\right|=\left\{\sqrt{n+1}\beta_{n}\right\}\left(\ln(n+1)\right)^{-1/2}O\left(\sum_{i=k}^{n}i^{-3/2}\right) \rightarrow 0 \text{ for } \tau=1/2,\nonumber\\
{}&\sqrt{n+1}\beta_{n}\sum_{i=k}^{n}\frac{\gamma_{i}}{\beta_{i}}\left|f_{i}\left(\frac{S_{i}}{i}\right)-h\left(\frac{S_{i}}{i}\right)\right|=\left\{(n+1)^{\tau}\beta_{n}\right\}n^{1/2-\tau}O\left(\sum_{i=k}^{n}i^{\tau-2}\right)=O\left(n^{1/2-\tau}\right)+O\left(n^{-1/2}\right),\nonumber
\end{align}
which converges to $0$ when $\tau>1/2$. Since $h'(x^{*})=-\tau$, we deduce, the same way as \eqref{R_{n}_second_series_bound}, that 
\begin{align}
\sum_{i=k}^{n}\frac{\gamma_{i}}{\beta_{i}}\left|h\left(\frac{S_{i}}{i}\right)+\tau\left(\frac{S_{i}}{i}-x^{*}\right)\right|=O\left(\sum_{i=k}^{n}\frac{\gamma_{i}}{\beta_{i}}\left(\frac{S_{i}}{i}-x^{*}\right)^{2}\right).\nonumber
\end{align}
Similar to \eqref{conditional_second_moment_bound_growing_sample_size_with_replacement}, we can deduce that 
\begin{align}
\E\left[\left(\frac{S_{n+1}}{n+1}-x^{*}\right)^{2}\big|\mathcal{F}_{n}\right]\leqslant \left(\frac{S_{n}}{n}-x^{*}\right)^{2}+\frac{5}{(n+1)^{2}}+\frac{2}{n+1}\left(\frac{S_{n}}{n}-x^{*}\right)f_{n}\left(\frac{S_{n}}{n}\right).\label{R_{n}_second_series_bound_1}
\end{align}
Arguing as we did to deduce \eqref{x h_{n}(x) bound}, and by \eqref{without_replacement_function_approximation_error}, we have, given $0<\eta<\tau$, some $\epsilon>0$ such that
\begin{align}
{}&\left(x-x^{*}\right)f_{n}(x)\leqslant O(n^{-1})+(-\tau+\eta)(x-x^{*})^{2} \quad \text{for all }x\in[x^{*}-\epsilon,x^{*}+\epsilon].\label{R_{n}_second_series_bound_last_term_bound}
\end{align}
\sloppy Similar to \eqref{u_{n}_recurrence_bound}, setting $T=\{n\geqslant j:|S_{n}/n|>\epsilon\}$ for $j\geqslant k$ and $\epsilon>0$, and $u_{n}=\E[(S_{n}/n-x^{*})^{2}\chi\{T>n\}]$ for $n\geqslant j\geqslant k$, we obtain, from \eqref{R_{n}_second_series_bound_1} and \eqref{R_{n}_second_series_bound_last_term_bound}:
\begin{align}
u_{n+1}\leqslant \left\{1-2(n+1)^{-1}(\tau-\eta)\right\}u_{n}+5(n+1)^{-2}+O(n^{-2})=\left\{1-2(n+1)^{-1}(\tau-\eta)\right\}u_{n}+O(n^{-2}).\nonumber
\end{align}
This boils the set-up down to almost exactly what has been considered in the proof of Theorem 2.2.12.\ in \cite{duflo2013random}, and the rest of the conclusion follows similarly. This concludes the proof of Theorem~\ref{thm:main_7}.
\end{proof}

\begin{proof}[Proof of Theorem~\ref{thm:main_8}]
Once again, we follow the proof of Theorem~\ref{thm:main_4}. The stochastic approximation representation of the process is given by \eqref{sa_3}, and instead of \eqref{h_{n}_defn}, we have $\E\left[Y_{n+1}\big|\mathcal{F}_{n}\right]=f_{n}\left(S_{n}/n\right)$ by \eqref{X_{n+1}_cond_exp_without_replacement}, where $f_{n}(x)=F_{n}(x)-x$, with $F_{n}$ as defined in \eqref{F_{n}_defn}. We set 
\begin{equation}
Z_{n+1}=Y_{n+1}-f_{n}\left(\frac{S_{n}}{n}\right)=X_{n+1}-F_{n}\left(\frac{S_{n}}{n}\right) \quad \text{and} \quad \theta_{n}=f_{n}\left(\frac{S_{n}}{n}\right)+\tau\left\{\frac{S_{n}}{n}-\left(2x^{*}-1\right)\right\},\label{many_definitions_2}
\end{equation}
and we define $\alpha_{n}$ and $\beta_{n}$ as in \eqref{many_definitions}. Consequently, \eqref{iterated_recurrence} holds with $\{M_{n}\}$ and $\{R_{n}\}$ defined the same way as in \eqref{M_{n},R_{n}_defns}, but in terms of $Z_{i}$ and $\theta_{i}$ as defined in \eqref{many_definitions_2}. Moreover, \eqref{beta_{n}_growth_rate} holds, implying \eqref{gamma_beta_ratio_series_tau_geq_1/2}. Next, similar to \eqref{increasing_process_without_replacement}, we deduce that $I_{n}=\sum_{k=1}^{n-1}\gamma_{k}^{2}\beta_{k}^{-2}\left\{1-F_{k}^{2}\left(S_{k}/k\right)\right\}=O\left(\sum_{k=2}^{n}k^{2\tau-2}\right)$, which converges when $\tau<1/2$, so that $\{M_{n}\}$ converges almost surely to a finite limit by Theorem 4.5.2.\ of \cite{durrett2019probability}. This, along with \eqref{beta_{n}_growth_rate}, ensures that $(n+1)^{\tau}\beta_{n}M_{n+1}$ converges almost surely to a finite limit when $\tau<1/2$.

Since \eqref{gamma_beta_ratio_series_tau_geq_1/2} is true, $\sum_{k=1}^{n-1}\gamma_{k}^{2}\beta_{k}^{-2}$ diverges when $\tau\geqslant 1/2$. Since $-1\leqslant F_{n}(\cdot) \leqslant 1$ for each $n$, and $-1\leqslant \hat{g}(\cdot)=2g(\cdot)-1 \leqslant 1$, we conclude, via \eqref{F_{n}_approximation_error}, that $F_{n}^{2}(x)$ converges to ${\hat{g}}^{2}((x+1)/2)$ uniformly for all $x\in[-1,1]$. Since the conclusion of Theorem~\ref{thm:main_6} remains true here, $\{S_{n}/n\}$ converges almost surely to $(2x^{*}-1)$. Arguing the same way as we did to derive \eqref{H_{n}^{2}(S_{n}/n)_converges}, we get $F_{n}^{2}\left(S_{n}/n\right)\rightarrow \left\{2g(x^{*})-1\right\}^{2}=(2x^{*}-1)^{2}$, keeping in mind that $g(x^{*})=x^{*}$. The observations made in this paragraph, via Lemma 2.2.13.\ of \cite{duflo2013random}, yields 
\begin{align}
\left(\sum_{k=1}^{n-1}\gamma_{k}^{2}\beta_{k}^{-2}\right)^{-1}I_{n}\rightarrow 4x^{*}(1-x^{*}).\nonumber
\end{align}
The Lyapunov condition can be shown to hold for $\tau=1/2$ the same was as in \eqref{Lyapunov_cond}, and the Lindeberg condition can be shown to hold for $\tau>1/2$ the same way as in \eqref{Lindeberg_cond}, with $H_{k-1}$ replaced by $F_{k-1}$ everywhere (and using the bounds $-1\leqslant F_{n}(\cdot)\leqslant 1$ for each $n$). Combining everything, we conclude that \eqref{M_{n}_normality_tau=1/2} holds for $\tau=1/2$ and \eqref{M_{n}_normality_tau>1/2} holds for $\tau>1/2$.

From \eqref{M_{n},R_{n}_defns}, $\theta_{n}$ as defined in \eqref{many_definitions_2}, $f_{n}$ as defined above, $\hat{h}$ as defined in \eqref{hat_h_error_defns}, \eqref{F_{n}_approximation_error}, \eqref{beta_{n}_growth_rate} and \eqref{R_{n}_second_series_bound}:
\begin{align}
\left|R_{n}\right|\leqslant{}& 
\sum_{i=1}^{n}\frac{\gamma_{i}}{\beta_{i}}\left|F_{i}\left(\frac{S_{i}}{i}\right)-\left\{2g\left(\frac{1}{2}\left(1+\frac{S_{i}}{i}\right)\right)-1\right\}\right|+\sum_{i=1}^{n}\frac{\gamma_{i}}{\beta_{i}}\left|\hat{h}\left(\frac{S_{i}}{i}\right)+\tau\left\{\frac{S_{n}}{n}-(2x^{*}-1)\right\}\right|\nonumber\\
\leqslant{}&O\left(\sum_{i=1}^{n}(i+1)^{\tau-1}k(i)^{-1}\right)+M\sum_{i=1}^{n}\frac{\gamma_{i}}{\beta_{i}}\left\{\frac{S_{i}}{i}-(2x^{*}-1)\right\}^{2}.\label{R_{n}_split_2}
\end{align}
By the first criterion in \eqref{series_convergence_criterion_5} and \eqref{beta_{n}_growth_rate}, the first series in \eqref{R_{n}_split_2}, multiplied by $(n+1)^{\tau}\beta_{n}$, converges as $n\rightarrow\infty$ when $\tau<1/2$. By the first criterion in \eqref{series_convergence_criterion_6} and \eqref{beta_{n}_growth_rate}, the first series in \eqref{R_{n}_split_2}, multiplied by $\sqrt{(n+1)}\{\ln (n+1)\}^{-1/2}\beta_{n}$, converges to $0$ as $n\rightarrow\infty$ when $\tau=1/2$. Finally, by the first criterion in \eqref{series_convergence_criterion_7} and \eqref{beta_{n}_growth_rate}, the first series in \eqref{R_{n}_split_2}, multiplied by $\sqrt{(n+1)}\beta_{n}$, converges to $0$ as $n\rightarrow\infty$ when $\tau>1/2$.

Next, a derivation similar to that of \eqref{conditional_second_moment_bound_growing_sample_size_with_replacement} leads to
\begin{align}
{}&\E\left[\left\{\frac{S_{n+1}}{n+1}-(2x^{*}-1)\right\}^{2}\Bigg|\mathcal{F}_{n}\right]\leqslant\left\{\frac{S_{n}}{n}-(2x^{*}-1)\right\}^{2}+\frac{5}{(n+1)^{2}}+\frac{2}{n+1}\left\{\frac{S_{n}}{n}-(2x^{*}-1)\right\}f_{n}\left(\frac{S_{n}}{n}\right),\nonumber
\end{align}
and a derivation similar to that of \eqref{x h_{n}(x) bound}, using \eqref{F_{n}_approximation_error}, yields, given any $0 < \eta < \tau$, an $\epsilon>0$ such that 
\begin{equation}
\left\{x-(2x^{*}-1)\right\}f_{n}(x)\leqslant O\left(k(n)^{-1}\right)+\left(-\tau+\eta\right)\left\{x-(2x^{*}-1)\right\}^{2} \text{ for all } x\in[2x^{*}-1-\epsilon,2x^{*}-1+\epsilon].\nonumber
\end{equation}
Defining the stopping time $T$ and the sequence $\{u_{n}\}$ the same way as defined right before \eqref{x h_{n}(x) bound}, for $n>j\geqslant k$ and $\epsilon >0$, we see that an inequality similar to \eqref{u_{n}_recurrence_bound} holds, i.e.\ we have
\begin{align}
u_{n+1}\leqslant\left\{1-2(n+1)^{-1}(\tau-\eta)\right\}u_{n}+5(n+1)^{-2}+O\left((n+1)^{-1}k(n)^{-1}\right).\nonumber
\end{align}
Letting $\kappa$, $\{v_{n}\}$ and $\{w_{n}\}$ be defined right after \eqref{u_{n}_recurrence_bound}, we obtain a recurrence similar to \eqref{u_{n}_final_recurrence}:
\begin{align}
u_{n+1}\leqslant w_{n}u_{j}+5w_{n}\sum_{i=j}^{n}w_{i}^{-1}(i+1)^{-2}+O\left(w_{n}\sum_{i=j}^{n}w_{i}^{-1}k(i)^{-1}(i+1)^{-1}\right).\nonumber
\end{align}
The first two terms in the inequality above can be dealt with the same way as the corresponding terms in the proof of Theorem 2.2.12.\ in \cite{duflo2013random}, whereas the third term can be dealt with the same way as the corresponding term in \eqref{u_{n}_final_recurrence} -- in other words, the third term of the inequality above leads to the expression in \eqref{third_term_u_{n}_recurrence_common_bound}, and the rest of the conclusion follows the same way as the last part of the proof of Theorem~\ref{thm:main_4}.
\end{proof}

\section*{Acknowledgments}
Part of this research was carried out while AR was affiliated with the Indian Institute of Science Education and Research (IISER) Pune. Both authors acknowledge Krishanu Maulik for helpful discussions.

\section*{Funding}
AR acknowledges support from the IIM-K SGRP Research Grant (No.\ SGRP/2025-26/22). \section*{Data availability}
 This research is theoretical in nature and does not involve the analysis or generation of any datasets.

\section*{Competing Interests}
The authors declare that they have no competing interests.

\section*{Credit Statement}
Both authors contributed equally to the work.

\bibliography{ERW_bib}
\end{document}